\newcommand{\R}{{\Bbb R}}
\newcommand{\C}{{\Bbb C}}
\newcommand{\D}{{\Bbb D}}
\newcommand{\diag}{\text{\upshape diag\,}}
\newcommand{\re}{\text{\upshape Re\,}}
\newcommand{\im}{\text{\upshape Im\,}}
\newcommand{\ntlim}{\lim^\angle}
\newtheorem{theorem}{Theorem}[section]
\newtheorem{lemma}[theorem]{Lemma}
\newtheorem{remark}[theorem]{Remark}
\newtheorem{RHproblem}[theorem]{RH problem}
\newtheorem{figuretext}{Figure}
\numberwithin{equation}{section}
\tikzset{middlearrow/.style={
			decoration={markings,
				mark= at position 0.6 with {\arrow{#1}} ,
			},
			postaction={decorate}
		}
	}
\tikzset{->-/.style={decoration={
				markings,
				mark=at position #1 with {\arrow{latex}}},postaction={decorate}}}
\tikzset{-<-/.style={decoration={
				markings,
				mark=at position #1 with {\arrowreversed{latex}}},postaction={decorate}}}
				\tikzset{
	master/.style={
		execute at end picture={
			\coordinate (lower right) at (current bounding box.south east);
			\coordinate (upper left) at (current bounding box.north west);
		}
	},
	slave/.style={
		execute at end picture={
			\pgfresetboundingbox
			\path (upper left) rectangle (lower right);
		}
	}
}
\tikzset{cross/.style={cross out, draw, 
         minimum size=2*(#1-\pgflinewidth), 
         inner sep=0pt, outer sep=0pt}}
\def\Xint#1{\mathchoice
{\XXint\displaystyle\textstyle{#1}}%
{\XXint\textstyle\scriptstyle{#1}}%
{\XXint\scriptstyle\scriptscriptstyle{#1}}%
{\XXint\scriptscriptstyle\scriptscriptstyle{#1}}%
\!\int}
\def\XXint#1#2#3{{\setbox0=\hbox{$#1{#2#3}{\int}$ }
\vcenter{\hbox{$#2#3$ }}\kern-.59\wd0}}
\def\dashint{\Xint-}
\title[Boussinesq's equation: asymptotics for $\MakeLowercase{\frac{x}{t}}\in (0,\frac{1}{\sqrt{3}})$]
{Boussinesq's equation for water waves: \\ asymptotics in sector V}
\author{C. Charlier$^{1}$ and J. Lenells$^{2}$}
\address{$^{1}$Centre for Mathematical Sciences, Lund University,
22100 Lund, Sweden. \\
$^{2}$Department of Mathematics, KTH Royal Institute of Technology, \\
10044 Stockholm, Sweden.}
\email{christophe.charlier@math.lu.se}
\email{jlenells@kth.se}
\begin{document}

\begin{abstract}
We consider the Boussinesq equation on the line for a broad class of Schwartz initial data for which (i) no solitons are present, (ii) the spectral functions have generic behavior near $\pm 1$, and (iii) the solution exists globally. In a recent work, we identified ten main sectors describing the asymptotic behavior of the solution, and for each of these sectors we gave an exact expression for the leading asymptotic term. In this paper, we give a proof for the formula corresponding to the sector $\frac{x}{t}\in (0,\frac{1}{\sqrt{3}})$. 
\end{abstract}

\maketitle

\noindent
{\small{\sc AMS Subject Classification (2020)}: 35G25, 35Q15, 76B15.}

\noindent
{\small{\sc Keywords}: Boussinesq equation, long-time asymptotics, Riemann-Hilbert problem.}

\setcounter{tocdepth}{1}

\section{Introduction}

Because of the great complexity of the water wave problem, several approximate models have been considered. One famous such model is the Boussinesq equation \cite{B1872}
\begin{align}\label{boussinesq}
u_{tt} = u_{xx} + (u^2)_{xx} + u_{xxxx}.
\end{align}
Here $u(x,t)$ is a real-valued function and subscripts denote partial derivatives. This nonlinear equation models small-amplitude dispersive waves in shallow water that can propagate in both the right and left directions \cite{J1997}. The Boussinesq equation is ill-posed, and is therefore often referred to as the ``bad" Boussinesq equation. 
Partly due to this ill-posedness, the mathematical literature on \eqref{boussinesq} is limited:
we refer to \cite{H1973, B1976, BZ2002} for results on soliton solutions, to \cite{Z1974} for a Lax pair, and to \cite{LS1985, Y2002} for nonexistence results of global solutions for certain initial-boundary value problems. Until recently \cite{CLmain}, the problem of obtaining long-time asymptotics for the solution of \eqref{boussinesq} was still open, see e.g. Deift's list of open problems in \cite{D2008}.

In \cite{CLmain}, we developed an inverse scattering approach to the initial value problem for \eqref{boussinesq}. Among other things, we proved in \cite{CLmain} that for a large class of initial data, the solution is unique and exists globally. The asymptotic behavior of such solutions is described by ten main asymptotic sectors, and for each of these sectors the leading asymptotic term was given in \cite[Theorem 2.14]{CLmain}. For reasons of length, the proofs of some of these asymptotic formulas were omitted in \cite{CLmain}. The purpose of this paper is to give the proof for the formula corresponding to the sector $\frac{x}{t} \in (0,\frac{1}{\sqrt{3}})$ (this sector was denoted ``sector V" in \cite{CLmain}).

Results on long-time asymptotics for other integrable equations can be found in e.g. \cite{BM2017, BIK2009, BJM2018, BKS2022, BKST2009, CF2022, DZ1993, DIZ1993, DMM2019, FL2010, GM2020, GT2009, HXF2015, LGWW2019, RS2019, TVZ2006, XF2020, YF2023}.

\section{main results}\label{section:main results}

We consider real-valued initial data in the Schwartz class,
\begin{align}\label{initial data}
u_{0}(x):=u(x,0) \in \mathcal{S}(\mathbb{R}), \qquad u_{1}(x):=u_{t}(x,0) \in \mathcal{S}(\mathbb{R}).
\end{align}
We further assume that $\int_{\mathbb{R}}u_{1}(x)dx = 0$. As noted in \cite{CLmain}, this assumption is natural because it ensures that the total mass $\int_{\mathbb{R}}u(x,t)dx$ is conserved in time. The approach of \cite{CLmain} provides a representation for the solution to the initial value problem in terms of the solution $n$ of a row-vector Riemann–Hilbert (RH) problem. This RH problem depends on two scalar reflection coefficients, $r_{1}(k)$ and $r_{2}(k)$, which are defined as follows (see \cite{CLmain} for details).

\subsection{Definition of $r_{1}$ and $r_{2}$.} Let $\omega := e^{\frac{2\pi i}{3}}$ and define $\{l_j(k), z_j(k)\}_{j=1}^3$ by
\begin{align}\label{lmexpressions intro}
& l_{j}(k) = i \frac{\omega^{j}k + (\omega^{j}k)^{-1}}{2\sqrt{3}}, \qquad z_{j}(k) = i \frac{(\omega^{j}k)^{2} + (\omega^{j}k)^{-2}}{4\sqrt{3}}, \qquad k \in \C\setminus \{0\}.
\end{align}
Let $P(k)$ and $\mathsf{U}(x,k)$ be given by
\begin{align*}
P(k) = \begin{pmatrix}
1 & 1 & 1  \\
l_{1}(k) & l_{2}(k) & l_{3}(k) \\
l_{1}(k)^{2} & l_{2}(k)^{2} & l_{3}(k)^{2}
\end{pmatrix}, \quad \mathsf{U}(x,k) = P(k)^{-1} \begin{pmatrix}
0 & 0 & 0 \\
0 & 0 & 0 \\
-\frac{u_{0x}}{4}-\frac{iv_{0}}{4\sqrt{3}} & -\frac{u_{0}}{2} & 0
\end{pmatrix} P(k),
\end{align*} 
where $v_{0}(x) = \int_{-\infty}^{x}u_{1}(x')dx'$. Define $X(x,k), X^A(x,k)$ as the unique solutions to the Volterra integral equations
\begin{align*}  
 & X(x,k) = I - \int_x^{\infty} e^{(x-x')\mathcal{L}(k)} (\mathsf{U}X)(x',k) e^{-(x-x')\mathcal{L}(k)} dx',
	\\
 & X^A(x,k) = I + \int_x^{\infty} e^{-(x-x')\mathcal{L}(k)} (\mathsf{U}^T X^A)(x',k)e^{(x-x')\mathcal{L}(k)} dx',	
\end{align*}
where $(\cdot)^T$ is the transpose operation and $\mathcal{L} = \diag(l_1 , l_2 , l_3)$. Define $s(k)$ and $s^A(k)$ by 
\begin{align*}
& s(k) = I - \int_\R e^{-x \mathcal{L}(k)}(\mathsf{U}X)(x,k)e^{x \mathcal{L}(k)}dx, & & s^A(k) = I + \int_\R e^{x \mathcal{L}(k)}(\mathsf{U}^T X^A)(x,k)e^{-x \mathcal{L}(k)}dx.
\end{align*}
The two spectral functions $\{r_j(k)\}_1^2$ are defined by
\begin{align}\label{r1r2def}
\begin{cases}
r_1(k) = \frac{(s(k))_{12}}{(s(k))_{11}}, & k \in \hat{\Gamma}_{1}\setminus \hat{\mathcal{Q}},
	\\ 
r_2(k) = \frac{(s^A(k))_{12}}{(s^A(k))_{11}}, \quad & k \in \hat{\Gamma}_{4}\setminus \hat{\mathcal{Q}},
\end{cases}
\end{align}	
where the contours $\hat{\Gamma}_j = \Gamma_{j} \cup \partial \D$ and the set $\hat{\mathcal{Q}}$ are defined in Section \ref{notationsubsec}.

\subsection{Assumptions}\label{assumptionssubsec}
As in \cite[Theorem 2.14]{CLmain}, our results will be valid under the following assumptions:
\begin{enumerate}[$(i)$]
\item There are no solitons: we suppose that $s(k)_{11}$ is nonzero for $k \in (\bar{D}_2 \cup \partial \D) \setminus \hat{\mathcal{Q}}$, where $D_{2}$ is the open set shown in Figure \ref{fig: Dn} and $\partial \mathbb{D}$ is the unit circle.
\item The spectral functions $s$ and $s^{A}$ have generic behavior near $k=1$ and $k=-1$: we suppose for $k_{\star} =1$ and $k_{\star}=-1$ that
\begin{align*}
& \lim_{k \to k_{\star}} (k-k_{\star}) s(k)_{11} \neq 0, & & \hspace{-0.1cm} \lim_{k \to k_{\star}} (k-k_{\star}) s(k)_{13} \neq 0, & & \hspace{-0.1cm} \lim_{k \to k_{\star}} s(k)_{31} \neq 0, & & \hspace{-0.1cm} \lim_{k \to k_{\star}} s(k)_{33} \neq 0, \\
& \lim_{k \to k_{\star}} (k-k_{\star}) s^{A}(k)_{11} \neq 0, & & \hspace{-0.1cm} \lim_{k \to k_{\star}} (k-k_{\star}) s^{A}(k)_{31} \neq 0, & & \hspace{-0.1cm} \lim_{k \to k_{\star}} s^{A}(k)_{13} \neq 0, & & \hspace{-0.1cm} \lim_{k \to k_{\star}} s^{A}(k)_{33} \neq 0.
\end{align*}
\item There exists a global solution to the initial value problem \eqref{boussinesq}--\eqref{initial data}: we suppose that $r_{1}(k)=0$ for all $k \in [0,i]$, where $[0,i]$ is the vertical segment from $0$ to $i$.
\end{enumerate}

\begin{figure}
\begin{center}
\begin{tikzpicture}[scale=0.7]
\node at (0,0) {};
\draw[black,line width=0.45 mm,->-=0.4,->-=0.85] (0,0)--(30:4);
\draw[black,line width=0.45 mm,->-=0.4,->-=0.85] (0,0)--(90:4);
\draw[black,line width=0.45 mm,->-=0.4,->-=0.85] (0,0)--(150:4);
\draw[black,line width=0.45 mm,->-=0.4,->-=0.85] (0,0)--(-30:4);
\draw[black,line width=0.45 mm,->-=0.4,->-=0.85] (0,0)--(-90:4);
\draw[black,line width=0.45 mm,->-=0.4,->-=0.85] (0,0)--(-150:4);

\draw[black,line width=0.45 mm] ([shift=(-180:2.5cm)]0,0) arc (-180:180:2.5cm);
\draw[black,arrows={-Triangle[length=0.2cm,width=0.18cm]}]
($(3:2.5)$) --  ++(90:0.001);
\draw[black,arrows={-Triangle[length=0.2cm,width=0.18cm]}]
($(57:2.5)$) --  ++(-30:0.001);
\draw[black,arrows={-Triangle[length=0.2cm,width=0.18cm]}]
($(123:2.5)$) --  ++(210:0.001);
\draw[black,arrows={-Triangle[length=0.2cm,width=0.18cm]}]
($(177:2.5)$) --  ++(90:0.001);
\draw[black,arrows={-Triangle[length=0.2cm,width=0.18cm]}]
($(243:2.5)$) --  ++(330:0.001);
\draw[black,arrows={-Triangle[length=0.2cm,width=0.18cm]}]
($(297:2.5)$) --  ++(210:0.001);

\draw[black,line width=0.15 mm] ([shift=(-30:0.55cm)]0,0) arc (-30:30:0.55cm);

\node at (0.8,0) {$\tiny \frac{\pi}{3}$};

\node at (-1:2.9) {\footnotesize $\Gamma_8$};
\node at (60:2.9) {\footnotesize $\Gamma_9$};
\node at (120:2.9) {\footnotesize $\Gamma_7$};
\node at (181:2.9) {\footnotesize $\Gamma_8$};
\node at (240:2.83) {\footnotesize $\Gamma_9$};
\node at (300:2.83) {\footnotesize $\Gamma_7$};

\node at (105:1.45) {\footnotesize $\Gamma_1$};
\node at (138:1.45) {\footnotesize $\Gamma_2$};
\node at (223:1.45) {\footnotesize $\Gamma_3$};
\node at (-104:1.45) {\footnotesize $\Gamma_4$};
\node at (-42:1.45) {\footnotesize $\Gamma_5$};
\node at (43:1.45) {\footnotesize $\Gamma_6$};

\node at (97:3.3) {\footnotesize $\Gamma_4$};
\node at (144:3.3) {\footnotesize $\Gamma_5$};
\node at (217:3.3) {\footnotesize $\Gamma_6$};
\node at (-96:3.3) {\footnotesize $\Gamma_1$};
\node at (-35:3.3) {\footnotesize $\Gamma_2$};
\node at (36:3.3) {\footnotesize $\Gamma_3$};
\end{tikzpicture}
\hspace{1.7cm}
\begin{tikzpicture}[scale=0.7]
\node at (0,0) {};
\draw[black,line width=0.45 mm] (0,0)--(30:4);
\draw[black,line width=0.45 mm] (0,0)--(90:4);
\draw[black,line width=0.45 mm] (0,0)--(150:4);
\draw[black,line width=0.45 mm] (0,0)--(-30:4);
\draw[black,line width=0.45 mm] (0,0)--(-90:4);
\draw[black,line width=0.45 mm] (0,0)--(-150:4);

\draw[black,line width=0.45 mm] ([shift=(-180:2.5cm)]0,0) arc (-180:180:2.5cm);
\draw[black,line width=0.15 mm] ([shift=(-30:0.55cm)]0,0) arc (-30:30:0.55cm);

\node at (120:1.6) {\footnotesize{$D_{1}$}};
\node at (-60:3.7) {\footnotesize{$D_{1}$}};

\node at (180:1.6) {\footnotesize{$D_{2}$}};
\node at (0:3.7) {\footnotesize{$D_{2}$}};

\node at (240:1.6) {\footnotesize{$D_{3}$}};
\node at (60:3.7) {\footnotesize{$D_{3}$}};

\node at (-60:1.6) {\footnotesize{$D_{4}$}};
\node at (120:3.7) {\footnotesize{$D_{4}$}};

\node at (0:1.6) {\footnotesize{$D_{5}$}};
\node at (180:3.7) {\footnotesize{$D_{5}$}};

\node at (60:1.6) {\footnotesize{$D_{6}$}};
\node at (-120:3.7) {\footnotesize{$D_{6}$}};

\node at (0.8,0) {$\tiny \frac{\pi}{3}$};

\draw[fill] (0:2.5) circle (0.1);
\draw[fill] (60:2.5) circle (0.1);
\draw[fill] (120:2.5) circle (0.1);
\draw[fill] (180:2.5) circle (0.1);
\draw[fill] (240:2.5) circle (0.1);
\draw[fill] (300:2.5) circle (0.1);

\node at (0:2.9) {\footnotesize{$\kappa_1$}};
\node at (60:2.85) {\footnotesize{$\kappa_2$}};
\node at (120:2.85) {\footnotesize{$\kappa_3$}};
\node at (180:2.9) {\footnotesize{$\kappa_4$}};
\node at (240:2.85) {\footnotesize{$\kappa_5$}};
\node at (300:2.85) {\footnotesize{$\kappa_6$}};

\draw[dashed] (-6.3,-3.8)--(-6.3,3.8);

\end{tikzpicture}
\end{center}
\begin{figuretext}\label{fig: Dn}
The contour $\Gamma = \cup_{j=1}^9 \Gamma_j$ in the complex $k$-plane (left) and the open sets $D_{n}$, $n=1,\ldots,6$, together with the sixth roots of unity $\kappa_j$, $j = 1, \dots, 6$ (right).
\end{figuretext}
\end{figure}

We emphasize that Assumption $(ii)$ is generic. Let us also comment on Assumptions $(i)$ and $(iii)$:
\begin{enumerate}[\hspace{15pt}$\bullet$]
\item In \cite{CLsolitons}, the long-time asymptotics for the solution to the initial value problem \eqref{boussinesq}--\eqref{initial data} is obtained in the sector $\frac{x}{t}\in (\smash{\frac{1}{\sqrt{3}}},1)$ in the case when solitons are present. The solitonless assumption $(i)$ is made here for simplicity; we believe that the case when solitons are present can be handled using similar ideas as in \cite{CLsolitons}. 
\item The Boussinesq equation is ill-posed and admits solutions that blow up at any given positive time \cite{CLmain}. Assumption $(iii)$ ensures that the solution belongs to the physically relevant class of global solutions. It follows from \cite[Theorems 2.6 and 2.12]{CLmain} that this class is large. More precisely, associated to any functions $\mathsf{r}_{1},\mathsf{r}_{2}$ satisfying Assumption $(iii)$ and the conditions stated in \cite[Theorem 2.3]{CLmain}, there exist Schwartz class initial data $u_{0},v_{0}$ such that $r_{j}(k;u_{0},v_{0})=\mathsf{r}_{j}(k)$, $j=1,2$, and such that the initial value problem \eqref{boussinesq}--\eqref{initial data} with $u_{1}:=\partial_{x}v_{0}$ admits a global Schwartz class solution.
\end{enumerate}

\subsection{Statement of main result}
We now introduce the necessary material to present our main result. 
For easy comparison, we use the same notation as in \cite{CLmain}.
Let $\zeta := x/t$ and assume that $\zeta \in (0,\frac{1}{\sqrt{3}})$. For $k \in \mathbb{C}\setminus\{0\}$, define $\Phi_{ij}(\zeta, k)$ for $1 \leq j<i \leq 3$ by
\begin{align}\label{def of Phi ij}
\Phi_{ij}(\zeta,k) = (l_{i}(k)-l_{j}(k))\zeta + (z_{i}(k)-z_{j}(k)).
\end{align}
The function $k\mapsto \Phi_{21}(\zeta,k)$ has four saddle points $\{k_{j}=k_{j}(\zeta)\}_{j=1}^{4}$ given by
\begin{subequations}\label{def of kj}
\begin{align}
& k_{2} = \frac{1}{4}\bigg( \zeta - \sqrt{8+\zeta^{2}} - i \sqrt{2}\sqrt{4-\zeta^{2}+\zeta\sqrt{8+\zeta^{2}}} \bigg), \label{def of k2} \\
& k_{4} = \frac{1}{4}\bigg( \zeta + \sqrt{8+\zeta^{2}} - i\sqrt{2}\sqrt{4-\zeta^{2}-\zeta\sqrt{8+\zeta^{2}}} \bigg), \label{def of k4}
\end{align}
\end{subequations} 
$k_{1} = \overline{k_{2}}$, and $k_{3}=\overline{k_{4}}$. Moreover, $|k_{2}|=|k_{4}|=1$, $\arg k_{2} \in (-\frac{3\pi}{4},-\frac{2\pi}{3})$, and $\arg k_{4} \in (-\frac{\pi}{4},-\frac{\pi}{6})$. Define also $\delta_j(\zeta, k)$, $j = 1, \dots, 5$, by
\begin{align}
& \delta_{1}(\zeta, k) = \exp \bigg\{ \frac{1}{2\pi i} \int_{\omega k_{4}}^{i} \frac{\ln (1+r_{1}(\omega^{2}s)r_{2}(\omega^{2}s))}{s - k} ds \bigg\}, \nonumber \\
& \delta_{2}(\zeta, k) = \exp \bigg\{ \frac{1}{2\pi i} \int_{i}^{\omega^{2}k_{2}} \frac{\ln (1+r_{1}(s)r_{2}(s))}{s - k} ds \bigg\}, \quad \delta_{3}(\zeta, k) = \exp \bigg\{ \frac{1}{2\pi i} \int_{i}^{\omega^{2}k_{2}} \frac{\ln f(s)}{s - k} ds \bigg\}, \nonumber \\
& \delta_{4}(\zeta, k) = \exp \bigg\{ \frac{1}{2\pi i} \int_{\omega^{2}k_{2}}^{\omega} \frac{\ln f(s)}{s - k} ds \bigg\}, \quad \delta_{5}(\zeta, k) = \exp \bigg\{ \frac{1}{2\pi i} \int_{\omega^{2}k_{2}}^{\omega} \frac{\ln f(\omega^{2}s)}{s - k} ds \bigg\}, \label{def of deltaj}
\end{align}
where the paths follow the unit circle in the counterclockwise direction, the principal branch is used for the logarithms, and 
\begin{align}\label{def of f}
f(k) := 1+r_{1}(k)r_{2}(k) + r_{1}(\tfrac{1}{\omega^{2}k})r_{2}(\tfrac{1}{\omega^{2}k}), \qquad k \in \partial \mathbb{D}.
\end{align}
Various properties of $r_{1},r_{2}$, and $f$ have been established in \cite[Theorem 2.3 and Lemma 2.13]{CLmain}. In particular, $r_{1},r_{2},f$ are well-defined on $\partial \D \setminus \hat{\mathcal{Q}}$ and the arguments of all logarithms appearing in the above definitions of $\{\delta_j(\zeta, k)\}_{j=1}^{5}$ are $>0$. Define 
\begin{align*}
& \mathcal{D}_{1}(k) = \frac{\delta_{1}(\omega^{2} k)\delta_{1}(\frac{1}{\omega k})^{2}\delta_{1}(\omega k)}{\delta_{1}(\frac{1}{k})\delta_{1}(\frac{1}{\omega^{2}k})} \frac{\delta_{2}(k)\delta_{2}(\omega^{2}k)\delta_{2}(\frac{1}{k})^{2}}{\delta_{2}(\omega k)^{2} \delta_{2}(\frac{1}{\omega^{2}k})\delta_{2}(\frac{1}{\omega k})} \frac{\delta_{3}(\omega k) \delta_{3}(\omega^{2} k) \delta_{3}(\frac{1}{\omega k})^{2}}{\delta_{3}(k)^{2}\delta_{3}(\frac{1}{k})\delta_{3}(\frac{1}{\omega^{2} k})} \nonumber \\
& \hspace{1.5cm} \times \frac{\delta_{4}(\omega^{2}k)^{2} \delta_{4}(\frac{1}{k})\delta_{4}(\frac{1}{\omega k})}{\delta_{4}(k)\delta_{4}(\omega k)\delta_{4}(\frac{1}{\omega^{2} k})^{2}} \frac{\delta_{5}(\omega k)^{2} \delta_{5}(\frac{1}{\omega k})\delta_{5}(\frac{1}{\omega^{2}k})}{\delta_{5}(k) \delta_{5}(\frac{1}{k})^{2}\delta_{5}(\omega^{2}k)}, \\
& \mathcal{D}_{2}(k) = \frac{\delta_{1}(\omega^{2} k)^{2}  \delta_{1}(\frac{1}{\omega k})\delta_{1}(\frac{1}{\omega^{2} k})}{\delta_{1}(k)\delta_{1}(\frac{1}{k})^{2}\delta_{1}(\omega k)} \frac{\delta_{2}(\frac{1}{k})\delta_{2}(\frac{1}{\omega k})}{\delta_{2}(\omega k) \delta_{2}(\frac{1}{\omega^{2}k})^{2}\delta_{2}(\omega^{2} k)} \frac{\delta_{3}(\omega^{2} k)^{2}  \delta_{3}(\frac{1}{\omega k})\delta_{3}(\frac{1}{\omega^{2} k})}{\delta_{3}(\frac{1}{k})^{2}\delta_{3}(\omega k)} \nonumber \\
& \hspace{1.5cm} \times \frac{\delta_{4}(\omega^{2}k) \delta_{4}(\frac{1}{\omega k})^{2}}{\delta_{4}(\omega k)^{2}\delta_{4}(\frac{1}{\omega^{2} k})\delta_{4}(\frac{1}{k})} \frac{\delta_{5}(\omega k) \delta_{5}(\frac{1}{\omega^{2}k})^{2}\delta_{5}(\omega^{2} k)}{\delta_{5}(\frac{1}{k}) \delta_{5}(\frac{1}{\omega k})}.
\end{align*}
Define $\chi_{j}(\zeta,k)$, $j = 1, 2, 3$, and $\tilde{\chi}_{j}(\zeta,k)$, $j = 4,5$, by
\begin{align}
\chi_{1}(\zeta,k) =&\; \frac{1}{2\pi i} \int_{\omega k_{4}}^{i}  \ln_{s}(k-s) d\ln(1+r_1(\omega^{2}s)r_{2}(\omega^{2}s)), \nonumber \\
\chi_{2}(\zeta,k) =&\; \frac{1}{2\pi i} \int_{i}^{\omega^{2}k_{2}}  \ln_{s}(k-s) d\ln(1+r_1(s)r_{2}(s)), \nonumber \\
\chi_{3}(\zeta,k) =&\; \frac{1}{2\pi i} \int_{i}^{\omega^{2}k_{2}}  \ln_{s}(k-s) d\ln(f(s)), \nonumber \\
\tilde{\chi}_{4}(\zeta,k) =&\;\frac{1}{2\pi i} \dashint_{\omega^{2}k_{2}}^{\omega}  \tilde{\ln}_{s}(k-s) d\ln(f(s)) \nonumber \\
:= &\; \frac{1}{2\pi i}\lim_{\epsilon \to 0_{+}} \bigg(  \int_{\omega^{2}k_{2}}^{e^{i(\frac{2\pi}{3}-\epsilon)}} \tilde{\ln}_{s}(k-s) d\ln(f(s)) - \tilde{\ln}_{\omega}(k-\omega)\ln(f(e^{i(\frac{2\pi}{3}-\epsilon)})) \bigg), \nonumber \\
\tilde{\chi}_{5}(\zeta,k) = &\; \frac{1}{2\pi i} \dashint_{\omega^{2}k_{2}}^{\omega} \tilde{\ln}_{s}(k-s) d\ln(f(\omega^{2}s)) \nonumber \\
:= &\; \frac{1}{2\pi i}\lim_{\epsilon \to 0_{+}} \bigg(  \int_{\omega^{2}k_{2}}^{e^{i(\frac{2\pi}{3}-\epsilon)}} \tilde{\ln}_{s}(k-s) d\ln(f(\omega^{2}s)) - \tilde{\ln}_{\omega}(k-\omega)\ln(f(e^{-i\epsilon})) \bigg), \label{def of chij}
\end{align}
where the paths follow the unit circle in the counterclockwise direction. For $s \in \{e^{i\theta}\, | \, \theta \in [\frac{\pi}{3},\frac{2\pi}{3}]\}$, $k \mapsto \ln_{s}(k-s)=\ln |k-s|+i \arg_{s}(k-s)$ has a cut along $\{e^{i \theta} \, | \,  \theta \in [\frac{\pi}{2},\arg s]\}\cup(i,i\infty)$ if $\arg s> \frac{\pi}{2}$, and a cut along $\{e^{i \theta} \, | \,  \theta \in [\arg s,\frac{\pi}{2}]\}\cup(i,i\infty)$ if $\arg s< \frac{\pi}{2}$, and the branch is such that $\arg_{s}(1)=2\pi$. Also, for $s \in \{e^{i\theta}: \theta \in [\frac{\pi}{2},\frac{2\pi}{3}]\}$, $k \mapsto \tilde{\ln}_{s}(k-s):=\ln |k-s|+i \tilde{\arg}_{s}(k-s)$ has a cut along $\{e^{i \theta}: \theta \in [\arg s,\pi]\}\cup(-\infty,0)$, and satisfies $\tilde{\arg}_{s}(1)=0$. In the definitions of $\tilde{\chi}_{4},\tilde{\chi}_{5}$, a regularized integral is needed because $f(1)=f(\omega)=0$ (see \cite[Lemma 2.13 $(i)$]{CLmain}). Let the functions $\{\nu_{j}\}_{j=1}^{4}$ and $\hat{\nu}_2$ be given by
\begin{align*}\nonumber
& \nu_1(k) = - \frac{1}{2\pi}\ln(1+r_{1}(\omega k)r_{2}(\omega k)), 
\qquad \nu_2(k) = - \frac{1}{2\pi}\ln(1+r_{1}(\omega^{2} k)r_{2}(\omega^{2} k)), 
	\\ 
& \nu_3(k) = - \frac{1}{2\pi}\ln(f(\omega k)),  \qquad
\nu_4(k) = - \frac{1}{2\pi}\ln(f(\omega^{2} k)), \qquad \hat{\nu}_2(k) = \nu_2(k) + \nu_3(k) - \nu_4(k).
\end{align*}
The following functions $\tilde{d}_{1,0}$ and $\tilde{d}_{2,0}$ appear in our final result:\begin{align}
& \tilde{d}_{1,0}(\zeta,t) = e^{-4\pi\nu_{1}(\omega^{2}k_{4})}e^{2\chi_{1}(\zeta,\omega k_{4})} e^{-2i \nu_{3}(\omega^{2}i) \ln_{i}(\omega k_{4}-i)} t^{-i \nu_{1}(\omega^{2}k_{4})} z_{1,\star}^{-2i\nu_{1}(\omega^{2}k_{4})} \mathcal{D}_{1}(\omega k_{4}), \label{def of dt10} \\
& \tilde{d}_{2,0}(\zeta,t) = e^{-2\chi_{2}(\zeta,\omega^{2} k_{2}) + \chi_{3}(\zeta,\omega^{2} k_{2}) - \tilde{\chi}_{4}(\zeta,\omega^{2} k_{2}) + 2\tilde{\chi}_{5}(\zeta,\omega^{2} k_{2}) } \nonumber \\
& \hspace{1.6cm} \times e^{i\nu_{3}(\omega^{2}i)\ln_{i}(\omega^{2} k_{2}-i)} t^{i (\nu_{4}(k_{2})-\nu_{3}(k_{2})-\nu_{2}(k_{2}))} z_{2,\star}^{2i(\nu_{4}(k_{2})-\nu_{3}(k_{2})-\nu_{2}(k_{2}))} \mathcal{D}_{2}(\omega^{2} k_{2}), \label{def of dt20}
\end{align}
where $z_{1,\star}$, $z_{2,\star}$ are given by
\begin{align*}
& z_{1,\star} = z_{1,\star}(\zeta) := \sqrt{2}e^{\frac{\pi i}{4}} \sqrt{\omega \frac{4-3k_{4} \zeta - k_{4}^{3} \zeta}{4k_{4}^{4}}}, \qquad -i\omega k_{4}z_{1,\star}>0, \\
& z_{2,\star} = z_{2,\star}(\zeta) := \sqrt{2}e^{\frac{\pi i}{4}} \sqrt{-\omega^{2} \frac{4-3k_{2} \zeta - k_{2}^{3} \zeta}{4k_{2}^{4}}}, \qquad -i\omega^{2} k_{2}z_{2,\star}>0,
\end{align*}
and the branches for the complex powers $z_{j,\star}^{\alpha} = e^{\alpha \ln z_{j,\star}}$ are fixed by
\begin{align*}
& \ln z_{1,\star} = \ln |z_{1,\star}| + i \arg z_{1,\star} = \ln |z_{1,\star}| + i \big( \tfrac{\pi}{2}-\arg (\omega k_{4}) \big), & &  \arg (\omega k_{4}) \in (\tfrac{\pi}{3},\tfrac{\pi}{2}),
	\\
& \ln z_{2,\star} = \ln |z_{2,\star}| + i \arg z_{2,\star} = \ln |z_{2,\star}| + i \big( \tfrac{\pi}{2}-\arg (\omega^{2} k_2) \big), & &  \arg (\omega^{2} k_2) \in (\tfrac{\pi}{2},\tfrac{2\pi}{3}).
\end{align*}
Finally, define also
\begin{align*}
& \tilde{q}_1 = |\tilde{r}(k_{4})|^{\frac{1}{2}}r_{1}(k_{4}), & & q_{2} = \tilde{r}(\omega^{2}k_{2})^{\frac{1}{2}}r_{1}(\omega^{2}k_{2}), & & q_{5} = |\tilde{r}(\omega k_{2})|^{\frac{1}{2}}r_{1}(\omega k_{2}), & & q_{6} = |\tilde{r}(\tfrac{1}{k_{2}})|^{\frac{1}{2}}r_{1}(\tfrac{1}{k_{2}}),
\end{align*}
where
\begin{align}\label{def of tilde r}
\tilde{r}(k):=\frac{\omega^{2}-k^{2}}{1-\omega^{2}k^{2}}, \qquad k \in \mathbb{C}\setminus \{\omega^{2},-\omega^{2}\}.
\end{align}

We now state our main result, which establishes the long-time behavior of $u(x,t)$ in the sector $\zeta \in (0,1/\sqrt{3})$. The statement involves the Gamma function $\Gamma(k)$, as well as the square roots of $\hat{\nu}_{2}(k_{2})$ and $\nu_{1}(\omega^{2}k_{4})$. These square roots are well-defined and $\geq 0$ thanks to the inequalities $\hat{\nu}_{2}(k_{2}) \geq 0$ and $\nu_{1}(\omega^{2}k_{4})\geq 0$ established in Lemma \ref{lemma: nuhat lemma IIbis} below.

\begin{theorem}\label{asymptoticsth}
Let $u_0,u_1 \in \mathcal{S}(\R)$ be real-valued and such that $\int_{\mathbb{R}}u_{1} dx =0$. Let $v_{0}(x) = \int_{-\infty}^{x}u_{1}(x')dx'$ and suppose that $u_{0},v_{0}$ are such that Assumptions (i), (ii), and (iii) are fulfilled. Let $\mathcal{I}$ be a fixed compact subset of $(0,\smash{\frac{1}{\sqrt{3}}})$. Then the global solution $u(x,t)$ of the initial value problem for (\ref{boussinesq}) with initial data $u_0, u_1$ enjoys the following asymptotics as $t \to \infty$:
\begin{align}\label{asymp for u sector V}
& u(x,t) =
 \frac{\tilde{A}_{1}(\zeta)}{\sqrt{t}}   \cos \tilde{\alpha}_{1}(\zeta,t) +\frac{A_{2}(\zeta)}{\sqrt{t}} \cos \tilde{\alpha}_{2}(\zeta,t) + O\bigg(\frac{\ln t}{t}\bigg), 
\end{align}
uniformly for $\zeta:= \frac{x}{t} \in \mathcal{I}$, where 
\begin{align}\nonumber
& \tilde{A}_{1}(\zeta) := \frac{4\sqrt{3}\sqrt{\nu_{1}(\omega^2 k_4)}\,\im k_{4}}{-i\omega k_{4} z_{1,\star}|\tilde{r}(\frac{1}{k_{4}})|^{\frac{1}{2}}}\sin(\arg(\omega k_{4})),  
	\\ \nonumber
& A_{2}(\zeta) := \frac{-4\sqrt{3}\sqrt{\hat{\nu}_2(k_2)}|\tilde{r}(\frac{1}{k_{2}})|^{\frac{1}{2}} \im k_{2}}{-i\omega^{2} k_{2} z_{2,\star}}\sin(\arg(\omega^{2} k_{2})), 
	\\ \nonumber 
& \tilde{\alpha}_1(\zeta, t) := \frac{3\pi}{4}-\arg \tilde{q}_{1}+\arg\Gamma(i\nu_{1}(\omega^2 k_4))+\arg \tilde{d}_{1,0}-t\, \im \Phi_{31}(\zeta,\omega k_{4}),
	\\ \nonumber
& \tilde{\alpha}_{2}(\zeta,t) := \frac{3\pi}{4}-\arg (q_{6}-q_{2}q_{5})+\arg\Gamma(i\hat{\nu}_2(k_2))+\arg \tilde{d}_{2,0}-t \, \im \Phi_{32}(\zeta,\omega^{2} k_{2}).
\end{align}
\end{theorem}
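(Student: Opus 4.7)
The plan is to apply the Deift--Zhou nonlinear steepest descent method to the row-vector RH problem for $n(x,t,k)$ constructed in \cite{CLmain}. For $\zeta=x/t\in(0,\tfrac{1}{\sqrt{3}})$, the phase $\Phi_{21}(\zeta,k)$ has four saddles $\{k_j\}_{1}^{4}$ lying on the unit circle; under the $\Z_6$ symmetries $k\mapsto\omega k$, $k\mapsto 1/\bar k$ of the problem, the saddles that actually govern the asymptotics (after rotating back to the fundamental sector) are $\omega k_{4}$ (stationary for $\Phi_{31}$) and $\omega^{2}k_{2}$ (stationary for $\Phi_{32}$), together with their Schwarz-symmetric partners. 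The two oscillatory terms in \eqref{asymp for u sector V} should then come from these two pairs of saddles, with $\tilde A_1,\tilde \alpha_1$ coming from $\omega k_4$ and $A_2,\tilde\alpha_2$ from $\omega^2 k_2$.

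\textbf{Scalar $\delta$-transformation and opening of lenses.} On each subarc of $\hat{\Gamma}\cup\partial\D$ delimited by the saddles and the singular points $\kappa_j$ (the sixth roots of unity), one of the two triangular factorizations of the jump matrix applies (upper$\times$diag$\times$lower on one side of a saddle, the opposite arrangement on the other). I would conjugate $n$ by a piecewise diagonal matrix whose entries are products of scalar Szegő-type RH solutions on the unit circle; these scalar solutions are precisely the $\delta_{j}(\zeta,k)$ in \eqref{def of deltaj}, and the six-fold products $\mathcal{D}_{1},\mathcal{D}_{2}$ are the correct $\Z_6$-equivariant reassembly of these six $\delta_{j}$'s at the two relevant saddles. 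After this conjugation, the off-diagonal pieces can be deformed into lens-shaped regions along the steepest descent paths of $\re(i\Phi_{ij})$, where the jumps decay exponentially in $t$ away from the saddles.

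\textbf{Local parametrices and extraction of $u$.} Near each saddle $k_\star\in\{\omega k_4,\omega^2 k_2\}$ and its conjugate, I would introduce the rescaled variable $z=z_{j,\star}\sqrt{t}\,(k-k_\star)$. The local RH problem reduces, to leading order, to the standard $2\times 2$ parabolic-cylinder model driven by the boundary values of $r_{1},r_{2}$ at the saddle. Solving it explicitly produces the parabolic-cylinder parameters $\nu_{1}(\omega^{2} k_{4})$ and $\hat\nu_{2}(k_{2})$, the factors $\arg\Gamma(i\nu)$, the square roots $\sqrt{\nu_{1}(\omega^2 k_4)}$, $\sqrt{\hat\nu_{2}(k_{2})}$, and the phases $t^{\pm i\nu}$. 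The remaining $t$-independent boundary values of the conjugating $\delta_{j}$-product at the saddle are exactly what is encoded in $\tilde d_{1,0}(\zeta,t)$ and $\tilde d_{2,0}(\zeta,t)$; evaluating these boundary values at $k_\star$ is where the explicit integrals $\chi_{j},\tilde\chi_{j}$ come in, with the regularization in $\tilde\chi_{4},\tilde\chi_{5}$ forced by $f(1)=f(\omega)=0$. A small-norm argument for the error $E=n^{(1)}\cdot(\text{global parametrix})^{-1}$ then gives $E=I+O(t^{-1/2})$ with the subleading contribution of size $O(\log t/t)$ coming from the Cauchy integrals around the local disks. Finally, I would substitute into the reconstruction formula for $u$ from \cite{CLmain} (extraction of a specific coefficient in the expansion of $n$ at $k=i$), sum the two saddle contributions, pair each with its Schwarz-symmetric partner to turn complex exponentials into the cosines $\cos\tilde\alpha_{1},\cos\tilde\alpha_{2}$, and read off the amplitudes $\tilde A_{1},A_{2}$ from the $\frac{d}{d\zeta}\im\Phi_{ij}$ factors that appear when one differentiates the saddle-point exponents with respect to $x$.

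\textbf{Expected main obstacle.} The soft steepest-descent architecture is by now standard, and the genuine difficulty will be algebraic bookkeeping: because of the $\Z_6$ symmetry, each of the six $\delta_{j}$'s enters through six different arguments ($k,\omega k,\omega^2 k, 1/k, 1/(\omega k), 1/(\omega^2 k)$), and Assumption $(ii)$ together with $f(1)=f(\omega)=0$ forces auxiliary small parametrices near the sixth roots of unity so that the global parametrix is consistent across all six sectors $D_{n}$. Tracking these contributions so that they combine into the exact formulas for $\mathcal{D}_{1},\mathcal{D}_{2},\tilde d_{1,0},\tilde d_{2,0}$ and the amplitudes $\tilde A_{1},A_{2}$, with the correct branches for the complex powers $z_{j,\star}^{\alpha}$ and for $\ln_{s}$, $\tilde\ln_{s}$, is where essentially all the labor of the proof will lie.
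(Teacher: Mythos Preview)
Your overall architecture matches the paper: a sequence of transformations $n \to n^{(1)} \to n^{(2)} \to n^{(3)} \to \hat{n}$ via two lens-openings, conjugation by the diagonal global parametrix built from the $\delta_j$, local parametrices at the saddles, and a final small-norm argument. Three points, however, are off.

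First, a factual slip: the reconstruction of $u$ is not from an expansion of $n$ at $k=i$, but from the large-$k$ expansion $n_3(x,t,k)=1+n_3^{(1)}(x,t)k^{-1}+O(k^{-2})$ via $u=-i\sqrt{3}\,\partial_x n_3^{(1)}$.

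Second, the paper does \emph{not} introduce auxiliary parametrices near the sixth roots of unity. The zeros $f(1)=f(\omega)=0$ do create poles in some lens-opening factors (this is one of the two main difficulties the paper flags), but the resolution is different from what you anticipate: the lens contours are arranged to terminate on the rays $\arg k=\pi/3,\,2\pi/3$ strictly away from $\omega$, and the residual jumps along those ray segments are shown to be $O(t^{-N})$ by exploiting the relation $r_{1}(\tfrac{1}{\omega k})+r_{2}(\omega k)+r_{1}(\omega^{2}k)r_{2}(\tfrac{1}{k})=0$ on $\partial\D$. No local analysis near the $\kappa_j$ is performed.

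Third, and most substantively, the local model at $\omega^2 k_2$ is \emph{not} the standard $2\times2$ parabolic-cylinder problem. The paper singles this out as its second main difficulty: the relevant model (Lemma~\ref{II Xlemma 3 green}) carries four parameters $q_2,q_4,q_5,q_6$ subject to the constraint $q_4-\bar q_5-q_2\bar q_6=0$, and its jump matrices involve two different branches $z^{i\nu}$ and $z_{(0)}^{i\nu}$ with cuts along $(-\infty,0]$ and $[0,\infty)$ respectively. This is precisely why the combination $\hat\nu_2=\nu_2+\nu_5-\nu_4$ appears instead of a single $\nu$, and why $|\tilde d_{2,0}|=e^{\pi(2\nu_2-\nu_4)}\neq 1$, so that the pairing of $Z_{2,23}$ with its partner to produce a real cosine requires the extra bookkeeping $\overline{\beta_{12}^{(2)}}=e^{-2\pi(2\nu_2-\nu_4)}\beta_{21}^{(2)}$. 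You should plan for this non-standard local model rather than the usual one.
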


\subsection{Notation}\label{notationsubsec}
We use the following notation throughout the paper.

\begin{enumerate}[$-$]
\item $C>0$ and $c>0$ denote generic constants that may change within a computation.

\item $[A]_1$, $[A]_2$, and $[A]_3$ denote the first, second, and third columns of a $3 \times 3$ matrix $A$.

\item If $A$ is an $n \times m$ matrix, we define $|A| \ge 0$ by $|A|^2=\Sigma_{i,j}|A_{ij}|^2$. 
For a piecewise smooth contour $\gamma \subset \C$ and $1 \le p \le \infty$, if $|A|$ belongs to $L^p(\gamma)$, we write $A \in L^p(\gamma)$ and define $\|A\|_{L^p(\gamma)} := \| |A|\|_{L^p(\gamma)}$.

\item $\D = \{k \in \C \, | \, |k| < 1\}$ denotes the open unit disk and $\partial \D = \{k \in \C \, | \, |k| = 1\}$ denotes the unit circle. 

\item $f^*(k):= \overline{f(\bar{k})}$ denotes the Schwarz conjugate of a function $f(k)$.

\item $D_\epsilon(k)$ denotes the open disk of radius $\epsilon$ centered at a point $k \in \C$.

\item $\mathcal{S}(\R)$ denotes the Schwartz space of all smooth functions $f$ on $\R$ such that $f$ and all its derivatives have rapid decay as $x \to \pm \infty$. 

\item $\kappa_{j} = e^{\frac{\pi i(j-1)}{3}}$, $j=1,\ldots,6$, denote the sixth roots of unity, see Figure \ref{fig: Dn}.

\item We let $\mathcal{Q} := \{\kappa_{j}\}_{j=1}^{6}$ and $\hat{\mathcal{Q}} := \mathcal{Q} \cup \{0\}$.

\item $D_n$, $n = 1, \dots, 6$, denote the open subsets of the complex plane shown in Figure \ref{fig: Dn}.

\item $\Gamma = \cup_{j=1}^9 \Gamma_j$ denotes the contour shown and oriented as in Figure \ref{fig: Dn}.

\item $\hat{\Gamma}_{j} = \Gamma_{j} \cup \partial \D$ denotes the union of $\Gamma_j$ and the unit circle. 

\end{enumerate}

\section{The RH problem for $n$}\label{overviewsec}

The proof of Theorem \ref{asymptoticsth} is based on a careful analysis of a RH problem derived in \cite{CLmain}. We now recall this RH problem, whose solution is denoted by $n$. Let $\theta_{ij}(x,t,k) = t \, \Phi_{ij}(\zeta,k)$. For $j = 1, \dots, 6$, let us write $\Gamma_j = \Gamma_{j'} \cup \Gamma_{j''}$, where $\Gamma_{j'} = \Gamma_j \setminus \D$ and $\Gamma_{j''} := \Gamma_j \setminus \Gamma_{j'}$ with $\Gamma_j$ as in Figure \ref{fig: Dn}. Because $r_{1}(k)=0$ for all $k \in [0,i]$ (by Assumption $(iii)$), the jump matrix $v(x,t,k)$ is given for $k \in \Gamma$ by
\begin{align}
& v_{1'} = \begin{pmatrix}
1 & -r_{1}(k)e^{-\theta_{21}} & 0 \\
0 & 1 & 0 \\
0 & 0 & 1
\end{pmatrix}, \; v_{1''} = \begin{pmatrix}
1 & 0 & 0 \\
r_{1}(\frac{1}{k})e^{\theta_{21}} & 1 & 0 \\
0 & 0 & 1
\end{pmatrix}, \; v_{2'} = \begin{pmatrix}
1 & 0 & 0 \\
0 & 1 & -r_{2}(\frac{1}{\omega k})e^{-\theta_{32}} \\
0 & 0 & 1
\end{pmatrix}, \nonumber \\
& v_{2''} = \begin{pmatrix}
1 & 0 & 0 \\
0 & 1 & 0 \\
0 & r_{2}(\omega k)e^{\theta_{32}} & 1
\end{pmatrix}, \;  v_{3'} = \begin{pmatrix}
1 & 0 & 0 \\
0 & 1 & 0 \\
-r_{1}(\omega^{2}k)e^{\theta_{31}} & 0 & 1
\end{pmatrix}, \; v_{3''} = \begin{pmatrix}
1 & 0 & r_{1}(\frac{1}{\omega^{2}k})e^{-\theta_{31}} \\
0 & 1 & 0 \\
0 & 0 & 1
\end{pmatrix}, \nonumber \\
& v_{4'} = \begin{pmatrix}
1 & -r_{2}(\frac{1}{k})e^{-\theta_{21}} & 0 \\
0 & 1 & 0 \\
0 & 0 & 1
\end{pmatrix}, \; v_{4''} = \begin{pmatrix}
1 & 0 & 0 \\
r_{2}(k)e^{\theta_{21}} & 1 & 0 \\
0 & 0 & 1
\end{pmatrix}, \; v_{5'} = \begin{pmatrix}
1 & 0 & 0 \\
0 & 1 & -r_{1}(\omega k)e^{-\theta_{32}} \\
0 & 0 & 1
\end{pmatrix}, \nonumber \\
& v_{5''} = \begin{pmatrix}
1 & 0 & 0 \\
0 & 1 & 0 \\
0 & r_{1}(\frac{1}{\omega k})e^{\theta_{32}} & 1
\end{pmatrix}, \;  v_{6'} = \begin{pmatrix}
1 & 0 & 0 \\
0 & 1 & 0 \\
-r_{2}(\frac{1}{\omega^{2} k})e^{\theta_{31}} & 0 & 1
\end{pmatrix}, \; v_{6''} = \begin{pmatrix}
1 & 0 & r_{2}(\omega^{2}k)e^{-\theta_{31}} \\
0 & 1 & 0 \\
0 & 0 & 1
\end{pmatrix}, \nonumber \\
& v_{7} = \begin{pmatrix}
1 & -r_{1}(k)e^{-\theta_{21}} & r_{2}(\omega^{2}k)e^{-\theta_{31}} \\
-r_{2}(k)e^{\theta_{21}} & 1+r_{1}(k)r_{2}(k) & \big(r_{2}(\frac{1}{\omega k})-r_{2}(k)r_{2}(\omega^{2}k)\big)e^{-\theta_{32}} \\
r_{1}(\omega^{2}k)e^{\theta_{31}} & \big(r_{1}(\frac{1}{\omega k})-r_{1}(k)r_{1}(\omega^{2}k)\big)e^{\theta_{32}} & f(\omega^{2}k)
\end{pmatrix}, \nonumber \\
& v_{8} = \begin{pmatrix}
f(k) & r_{1}(k)e^{-\theta_{21}} & \big(r_{1}(\frac{1}{\omega^{2} k})-r_{1}(k)r_{1}(\omega k)\big)e^{-\theta_{31}} \\
r_{2}(k)e^{\theta_{21}} & 1 & -r_{1}(\omega k) e^{-\theta_{32}} \\
\big( r_{2}(\frac{1}{\omega^{2}k})-r_{2}(\omega k)r_{2}(k) \big)e^{\theta_{31}} & -r_{2}(\omega k) e^{\theta_{32}} & 1+r_{1}(\omega k)r_{2}(\omega k)
\end{pmatrix}, \nonumber \\
& v_{9} = \begin{pmatrix}
1+r_{1}(\omega^{2}k)r_{2}(\omega^{2}k) & \big( r_{2}(\frac{1}{k})-r_{2}(\omega k)r_{2}(\omega^{2} k) \big)e^{-\theta_{21}} & -r_{2}(\omega^{2}k)e^{-\theta_{31}} \\
\big(r_{1}(\frac{1}{k})-r_{1}(\omega k) r_{1}(\omega^{2} k)\big)e^{\theta_{21}} & f(\omega k) & r_{1}(\omega k)e^{-\theta_{32}} \\
-r_{1}(\omega^{2}k)e^{\theta_{31}} & r_{2}(\omega k) e^{\theta_{32}} & 1
\end{pmatrix}, \label{vdef}
\end{align}
where $v_j, v_{j'}, v_{j''}$ denote the restrictions of $v$ to $\Gamma_{j}$, $\Gamma_{j'}$, and $\Gamma_{j''}$, respectively. Let $\Gamma_{\star} = \{i\kappa_j\}_{j=1}^6 \cup \{0\}$ denote the set of intersection points of $\Gamma$, and define
\begin{align}\label{def of Acal and Bcal}
\mathcal{A} := \begin{pmatrix}
0 & 0 & 1 \\
1 & 0 & 0 \\
0 & 1 & 0
\end{pmatrix} \qquad \mbox{ and } \qquad \mathcal{B} := \begin{pmatrix}
0 & 1 & 0 \\
1 & 0 & 0 \\
0 & 0 & 1
\end{pmatrix}.
\end{align}
The RH problem for $n$ is as follows.
\begin{RHproblem}[RH problem for $n$]\label{RHn}
Find a $1 \times 3$-row-vector valued function $n(x,t,k)$ with the following properties:
\begin{enumerate}[$(a)$]
\item\label{RHnitema} $n(x,t,\cdot) : \C \setminus \Gamma \to \mathbb{C}^{1 \times 3}$ is analytic.

\item\label{RHnitemb} The limits of $n(x,t,k)$ as $k$ approaches $\Gamma \setminus \Gamma_\star$ from the left and right exist, are continuous on $\Gamma \setminus \Gamma_\star$, and are denoted by $n_+$ and $n_-$, respectively. Furthermore, 
\begin{align}\label{njump}
  n_+(x,t,k) = n_-(x, t, k) v(x, t, k) \qquad \text{for} \quad k \in \Gamma \setminus \Gamma_\star.
\end{align}

\item\label{RHnitemc} $n(x,t,k) = O(1)$ as $k \to k_{\star} \in \Gamma_\star$.

\item\label{RHnitemd} For $k \in \C \setminus \Gamma$, $n$ obeys the symmetries
\begin{align}\label{nsymm}
n(x,t,k) = n(x,t,\omega k)\mathcal{A}^{-1} = n(x,t,k^{-1}) \mathcal{B}.
\end{align}

\item\label{RHniteme} $n(x,t,k) = (1,1,1) + O(k^{-1})$ as $k \to \infty$.
\end{enumerate}
\end{RHproblem}
Recall that $u_{0},v_{0}$ are such that Assumptions $(i)$, $(ii)$, and $(iii)$ hold, and that $u_{1}(x):=v_{0}'(x)$. Hence, it follows from \cite[Theorems 2.6 and 2.12]{CLmain} that RH problem \ref{RHn} has a unique solution $n(x,t,k)$ for each $(x,t) \in \R \times [0,\infty)$, that
$$n_{3}^{(1)}(x,t) := \lim_{k\to \infty} k (n_{3}(x,t,k) -1)$$ 
is well-defined and smooth for $(x,t) \in \R \times [0,\infty)$, and that $u(x,t)$ defined by
\begin{align}\label{recoveruvn}
u(x,t) = -i\sqrt{3}\frac{\partial}{\partial x}n_{3}^{(1)}(x,t)
\end{align}
is a Schwartz class solution of (\ref{boussinesq}) on $\R \times [0,\infty)$ with initial data $u_{0},u_{1}$.

Note that $v$ in \eqref{vdef} obeys the symmetries
\begin{align}\label{vsymm}
v(x,t,k) = \mathcal{A} v(x,t,\omega k)\mathcal{A}^{-1}
 = \mathcal{B} v(x,t, k^{-1})^{-1}\mathcal{B}, \qquad k \in \Gamma,
\end{align}
and that $v$ depends on $x,t$ only via the phase functions $\Phi_{21}$, $\Phi_{31}$, and $\Phi_{32}$. As mentioned in Section \ref{section:main results}, the saddle points of $\Phi_{21}$ are given by $\{k_{j}\}_{j=1}^{4}$. From the relations
\begin{align}\label{relations between the different Phi}
\Phi_{31}(\zeta,k) = - \Phi_{21}(\zeta,\omega^{2}k), \qquad \Phi_{32}(\zeta, k) = \Phi_{21}(\zeta, \omega k),
\end{align}
we then conclude that $\{\omega k_{j}\}_{j=1}^{4}$ are the saddle points of $\Phi_{31}$ and that $\{\omega^{2} k_{j}\}_{j=1}^{4}$ are the saddle points of $\Phi_{32}$. 
The signature tables for $\Phi_{21}$, $\Phi_{31}$, and $\Phi_{32}$ are shown in Figure \ref{IIbis fig: Re Phi 21 31 and 32 for zeta=0.7}. 
\begin{figure}[h]
\begin{center}
\begin{tikzpicture}[master]
\node at (0,0) {\includegraphics[width=4.5cm]{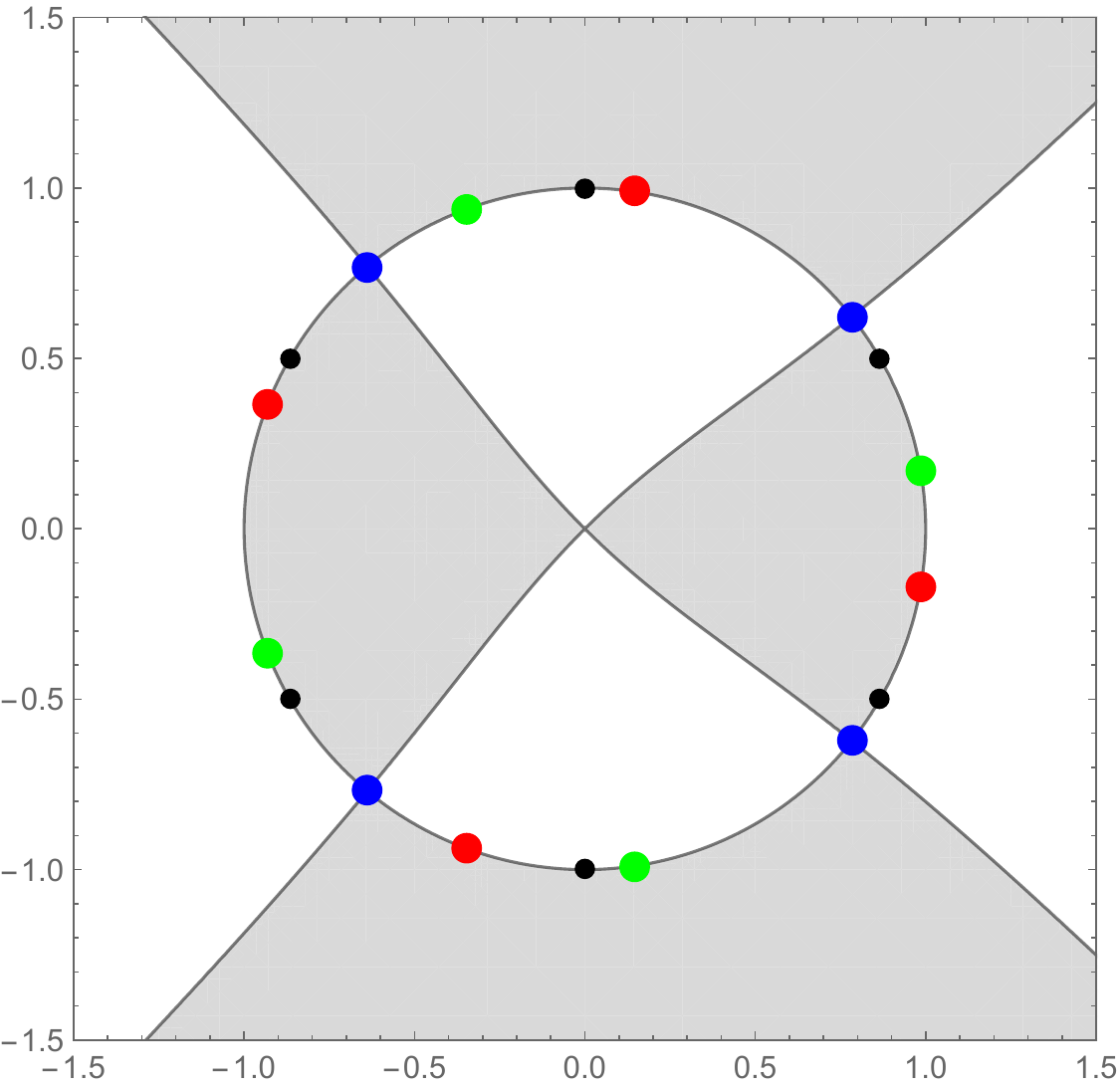}};

\node at (-1,1.18) {\tiny $k_1$};
\node at (-1.03,-.97) {\tiny $k_2$};
\node at (1.2,1.17) {\tiny $k_3$};
\node at (1.2,-1.03) {\tiny $k_4$};

\end{tikzpicture} \hspace{0.1cm} 
\begin{tikzpicture}[slave]
\node at (0,0) {\includegraphics[width=4.5cm]{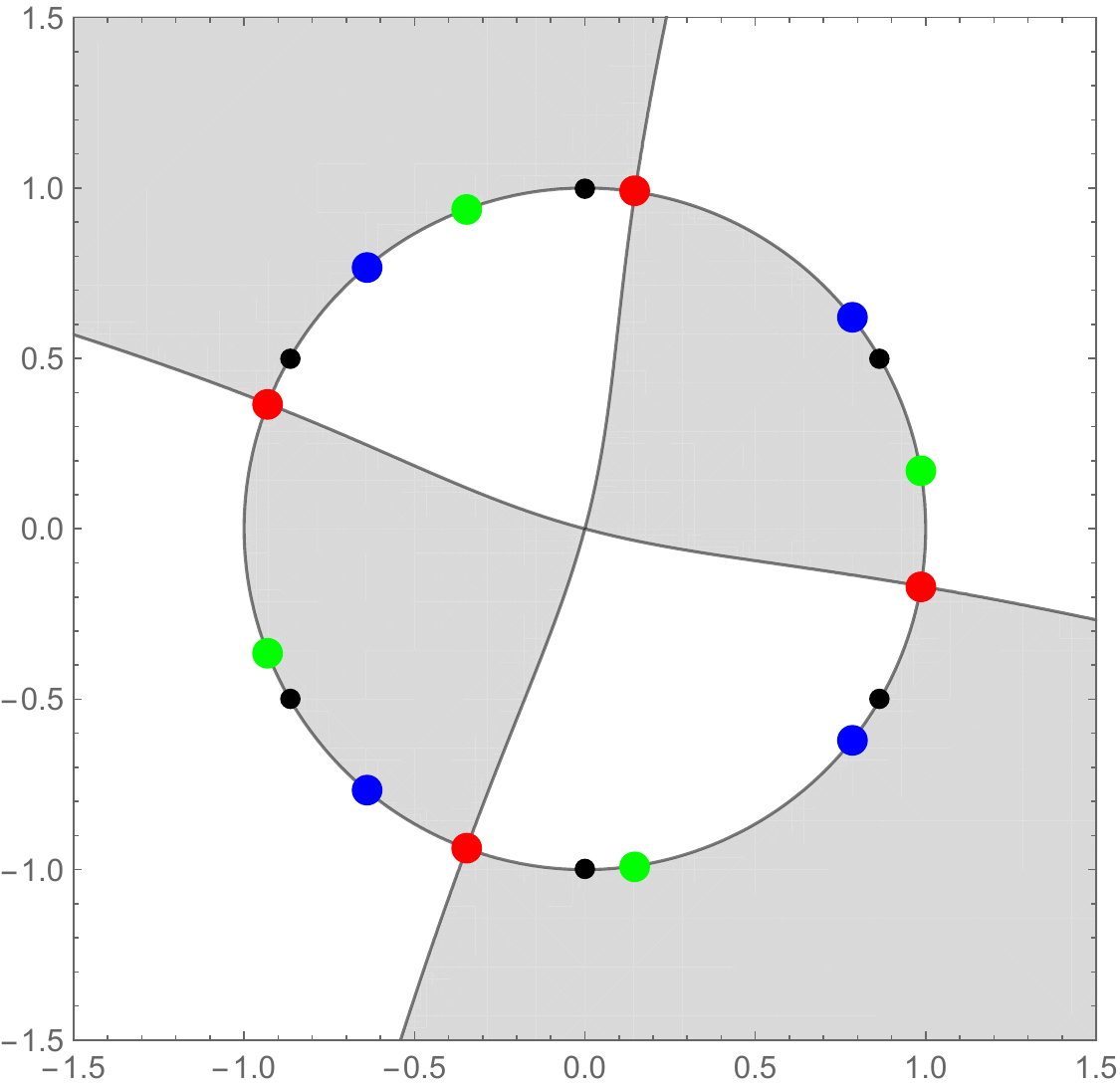}};

\node at (-.7,-1.3) {\tiny $\omega k_1$};
\node at (1.8,-0.02) {\tiny $\omega k_2$};
\node at (-1.5,0.5) {\tiny $\omega k_3$};
\node at (.65,1.53) {\tiny $\omega k_4$};

\end{tikzpicture} \hspace{0.1cm} 
\begin{tikzpicture}[slave]
\node at (0,0) {\includegraphics[width=4.5cm]{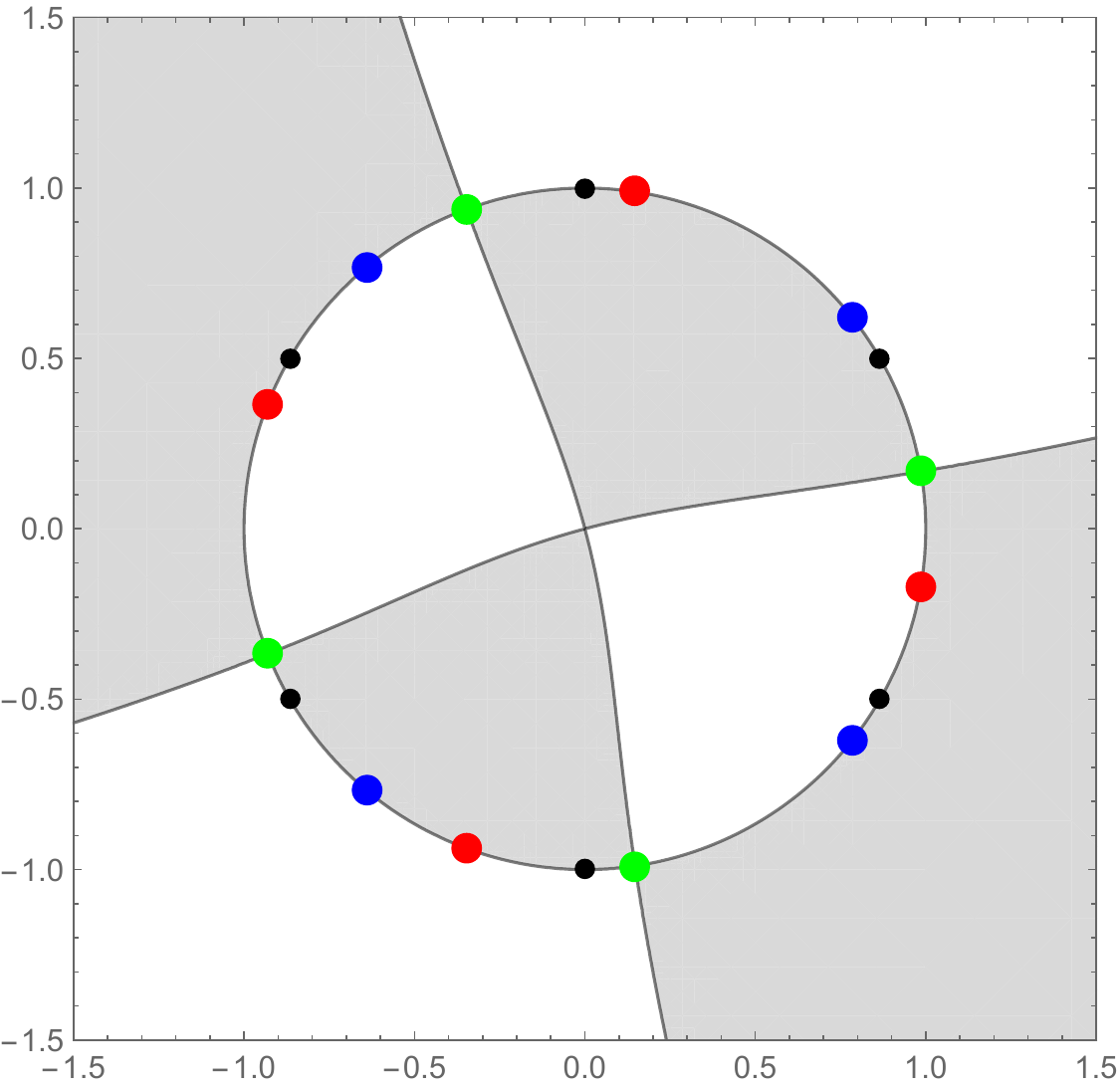}};

\node at (1.85,.16) {\tiny $\omega^2 k_1$};
\node at (-.8,1.5) {\tiny $\omega^2 k_2$};
\node at (0.7,-1.43) {\tiny $\omega^2 k_3$};
\node at (-1.6,-.33) {\tiny $\omega^2 k_4$};

\end{tikzpicture}
\end{center}
\begin{figuretext}
\label{IIbis fig: Re Phi 21 31 and 32 for zeta=0.7} From left to right: The signature tables for $\Phi_{21}$, $\Phi_{31}$, and $\Phi_{32}$ for $\zeta=\frac{1}{2\sqrt{3}}$. The grey regions correspond to $\{k \,|\, \re \Phi_{ij}>0\}$ and the white regions to $\{k \,|\, \re \Phi_{ij}<0\}$. The saddle points $k_{1},k_{2}, k_{3}, k_{4}$ of $\Phi_{21}$ are blue, the saddle points $\omega k_{1},\omega k_{2},\omega k_{3},\omega k_{4}$ of $\Phi_{31}$ are red, and the saddle points $\omega^{2} k_{1},\omega^{2} k_{2},\omega^{2} k_{3},\omega^{2} k_{4}$ of $\Phi_{32}$ are green. The black dots are the points $i \kappa_j$, $j=1,\ldots,6$.
\end{figuretext}
\end{figure}

Our proof employs the Deift--Zhou \cite{DZ1993} steepest descent method and involves a series of transformations $n \mapsto n^{(1)} \mapsto n^{(2)} \mapsto n^{(3)} \mapsto \hat{n}$. These transformations are invertible, meaning that the RH problems satisfied by $n^{(1)}, n^{(2)}, n^{(3)}, \hat{n}$ are equivalent to the original RH problem \ref{RHn}. In the sector $\zeta \in \mathcal{I}$ considered here\footnote{Recall that $\mathcal{I}$ denotes an arbitrary compact subset of $(0,\frac{1}{\sqrt{3}})$.}, it turns out that the main contribution to the long-time asymptotics of $u(x,t)$ come from a global parametrix $\Delta$ and from twelve local parametrices near the saddle points $\{k_{j},\omega k_{j},\omega^{2}k_{j}\}_{j=1}^{4}$. 

The jump contours and jump matrices for the RH problems for $n^{(1)}, n^{(2)}, n^{(3)}, \hat{n}$ will be denoted by $\Gamma^{(1)}, \Gamma^{(2)}, \Gamma^{(3)}, \hat{\Gamma}$ and $v^{(1)},v^{(2)},v^{(3)},\hat{v}$, respectively. The symmetries (\ref{nsymm}) and (\ref{vsymm}) will be maintained after each transformation, so that, for $j = 1, 2,3$,
\begin{align}\label{vjsymm}
& v^{(j)}(x,t,k) = \mathcal{A} v^{(j)}(x,t,\omega k)\mathcal{A}^{-1}
 = \mathcal{B} v^{(j)}(x,t,k^{-1})^{-1}\mathcal{B}, & & k \in \Gamma^{(j)},
	\\ \label{mjsymm}
& n^{(j)}(x,t, k) = n^{(j)}(x,t,\omega k)\mathcal{A}^{-1}
 = n^{(j)}(x,t, k^{-1}) \mathcal{B}, & & k \in \C \setminus \Gamma^{(j)},
\end{align}
and similarly for $\hat{n}$ and $\hat{v}$. These symmetries will save us some efforts, because (i) we will only need to construct explicitly two local parametrices near $\omega k_{4}$ and $\omega^{2}k_{2}$, and (ii) we will only have to define the transformations $n^{(j)} \mapsto n^{(j+1)}$ and $n^{(3)}\mapsto \hat{n}$ in the sector $\mathsf{S}:=\{k \in \mathbb{C}|  \arg k \in [\frac{\pi}{3},\frac{2\pi}{3}]\}$. In the rest of the complex plane, the local parametrices and the transformations will be defined using the $\mathcal{A}$- and $\mathcal{B}$-symmetries.

The analysis of the asymptotic sector $\zeta \in \mathcal{I}$ considered in this paper involves two main difficulties (which are not present for $\zeta>1$): (i) the opening of the lenses can only be implemented by introducing poles in the jump matrices, which complicates considerably the constructions of the global and local parametrices, and (ii) the model RH problem used for the local parametrix near $\omega^{2}k_{2}$, albeit using parabolic cylinder functions as in earlier works such as \cite{I1981}, involves jump matrices of a much more complicated nature than in the previous literature (see Lemma \ref{II Xlemma 3 green}).

\section{The $n \to n^{(1)}$ transformation}

The following properties of $r_{1}$ and $r_{2}$, which were proved in \cite[Theorem 2.3]{CLmain}, will be used throughout the steepest descent analysis: $r_1 \in C^\infty(\hat{\Gamma}_{1})$, $r_2 \in C^\infty(\hat{\Gamma}_{4}\setminus \{\omega^{2}, -\omega^{2}\})$, $r_{1}(\kappa_{j})\neq 0$ for $j=1,\ldots,6$, $r_{2}(k)$ has simple poles at $k=\pm \omega^2$, and simple zeros at $k=\pm\omega$, and $r_{1},r_{2}$ are rapidly decreasing as $|k|\to \infty$. Furthermore,
\begin{align}
& r_{1}(\tfrac{1}{\omega k}) + r_{2}(\omega k) + r_{1}(\omega^{2} k) r_{2}(\tfrac{1}{k}) = 0, & & k \in \partial \mathbb{D}\setminus \{\omega, -\omega\}, \label{r1r2 relation on the unit circle} \\
& r_{2}(k) = \tilde{r}(k)\overline{r_{1}(\bar{k}^{-1})}, \qquad \tilde{r}(k) :=\frac{\omega^{2}-k^{2}}{1-\omega^{2}k^{2}}, & & k \in \hat{\Gamma}_{4}\setminus \{0, \omega^{2}, -\omega^{2}\}, \label{r1r2 relation with kbar symmetry} \\
& r_{1}(1) = r_{1}(-1) = 1, \qquad r_{2}(1) = r_{2}(-1) = -1. \label{r1r2at0}
\end{align}
The first transformation $n \to n^{(1)}$ consists in opening lenses around $\partial \mathbb{D}\setminus \mathcal{Q}$. For $j=1,2$, define
$$\hat{r}_{j}(k) := \frac{r_{j}(k)}{1+r_{1}(k)r_{2}(k)}$$
and let
\begin{align*}
& r_{j,1}(k) := r_{j}(k), \quad r_{j,2}(k) := \hat{r}_{j}(k), \quad r_{j,3}(k) := \frac{r_{j}(k)-r_{j}(\frac{1}{\omega k})r_{j}(\frac{1}{\omega^{2} k})}{1+r_{1}(\frac{1}{\omega k})r_{2}(\frac{1}{\omega k})}, \\
& r_{j,4}(k) := \frac{r_{j}(k)}{f(k)}, \quad r_{j,5}(k) = \frac{r_{j}(k)-r_{j}(\frac{1}{\omega k})r_{j}(\frac{1}{\omega^{2}k})}{f(k)}.
\end{align*}
Since the above spectral functions are (in general) not defined in a neighborhood of $\partial \mathbb{D}$, we need to decompose them into  an analytic part and a small remainder. Let $M>1$ and let $\{U_{1}^{\ell}=U_{1}^{\ell}(\zeta,M)\}_{\ell=1}^{5}$ be the open sets defined by (see Figure \ref{IIbis fig: U1 and U2})
\begin{align*}
U_{1}^{1} & = \{k| \re \Phi_{21}>0 \} \cap \big(\{k| \arg k \in (\tfrac{5\pi}{6},\pi] \cup [-\pi,\arg(\omega^{2}k_{4})), \; M^{-1}<|k|<1 \} \\
& \quad \; \cup \{k| \arg k \in(-\tfrac{\pi}{2},\arg k_{4}), \; 1 < |k| < M\} \big), \\
U_{1}^{2} & = \{k| \re \Phi_{21}>0 \} \cap \big( \{k|\arg k \in(\arg k_{4},-\tfrac{\pi}{6}), \; M^{-1}<|k|<1\} \\
& \quad \; \cup \{k|\arg k \in (\tfrac{\pi}{2},\arg(\omega^{2}k_{2})), \; 1 < |k| < M\} \big), \\
U_{1}^{3} & = \{k| \re \Phi_{21}>0 \} \cap \{k| \arg k \in (\arg k_{1}, \tfrac{5\pi}{6}), \; M^{-1}<|k|<1\}, \\
U_{1}^{4} & = \{k| \re \Phi_{21}>0 \} \cap \{k| \arg k \in (\arg(\omega k_{2}),\tfrac{\pi}{6}), \; M^{-1}<|k|<1\}, \\
U_{1}^{5} & = \{k| \re \Phi_{21}>0 \} \cap \{k| \arg k \in (\arg(\omega^{2}k_{2}),\arg k_{1}), \; 1<|k|<M)\}.
\end{align*}
It is known from \cite[Lemma 2.13]{CLmain} that $1+r_{1}(k)r_{2}(k)>0$ for all $k \in \partial \mathbb{D}$ with $\arg k \in (\pi/3,\pi)\cup (-2\pi/3,0)$, and that $f(k)=0$ if and only if $k \in \{\pm 1, \pm \omega\}$. Hence, for each $\ell \in \{1,2,3\}$, $r_{1,\ell}$ is well-defined for $k \in \partial U_{1}^{\ell} \cap \partial \mathbb{D}$; $r_{1,4}$ is well-defined for $k \in (\partial U_{1}^{4} \cap \partial \mathbb{D})\setminus\{1\}$; and $r_{1,5}$ is well-defined for $k \in (\partial U_{1}^{5} \cap \partial \mathbb{D})\setminus\{\omega\}$. Let $n_{1}\geq 0$ be the smallest integer such that $(k-1)^{n_{1}}r_{1,4}(k)$ remains bounded as $k \to 1$, $k \in \partial \mathbb{D}$, and let $n_{\omega} \geq 0$ be the smallest integer such that $(k-\omega)^{n_{\omega}}r_{1,5}(k)$ remains bounded as $k \to \omega$, $k \in \partial \mathbb{D}$.

The next lemma establishes decompositions of $\{r_{1,\ell}\}_{\ell=1}^{5}$; we will then obtain decompositions of $\{r_{2,\ell}\}_{\ell=1}^{5}$ using the symmetry \eqref{r1r2 relation with kbar symmetry}. Let $N \geq 1$ be an integer.

\begin{figure}
\begin{center}
\begin{tikzpicture}[master]
\node at (0,0) {\includegraphics[width=4.5cm]{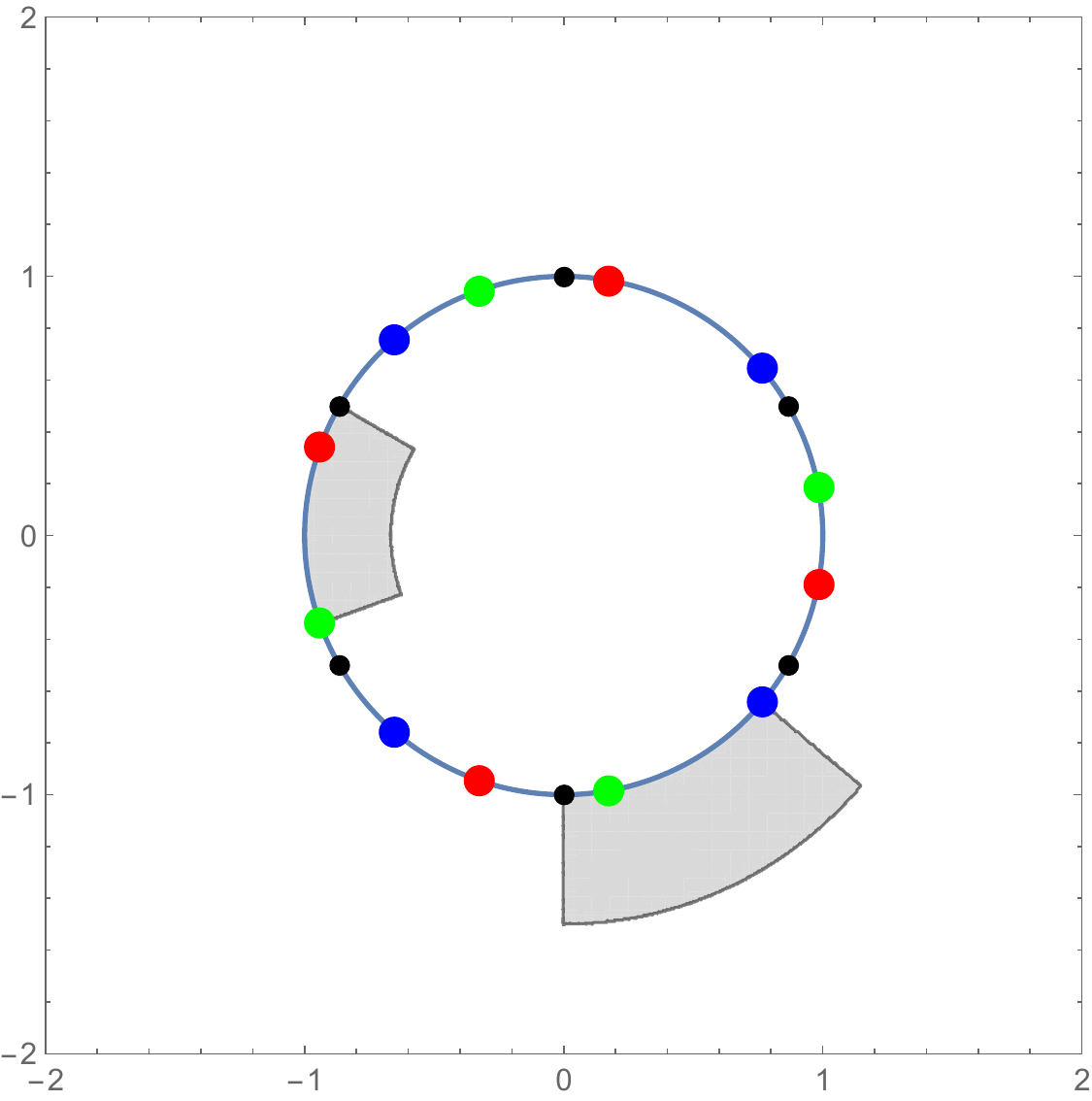}};
\node at (-0.8,0.15) {\tiny $U_{1}^{1}$};
\node at (0.6,-1.15) {\tiny $U_{1}^{1}$};
\end{tikzpicture} \hspace{0.5cm} \begin{tikzpicture}[slave]
\node at (0,0) {\includegraphics[width=4.5cm]{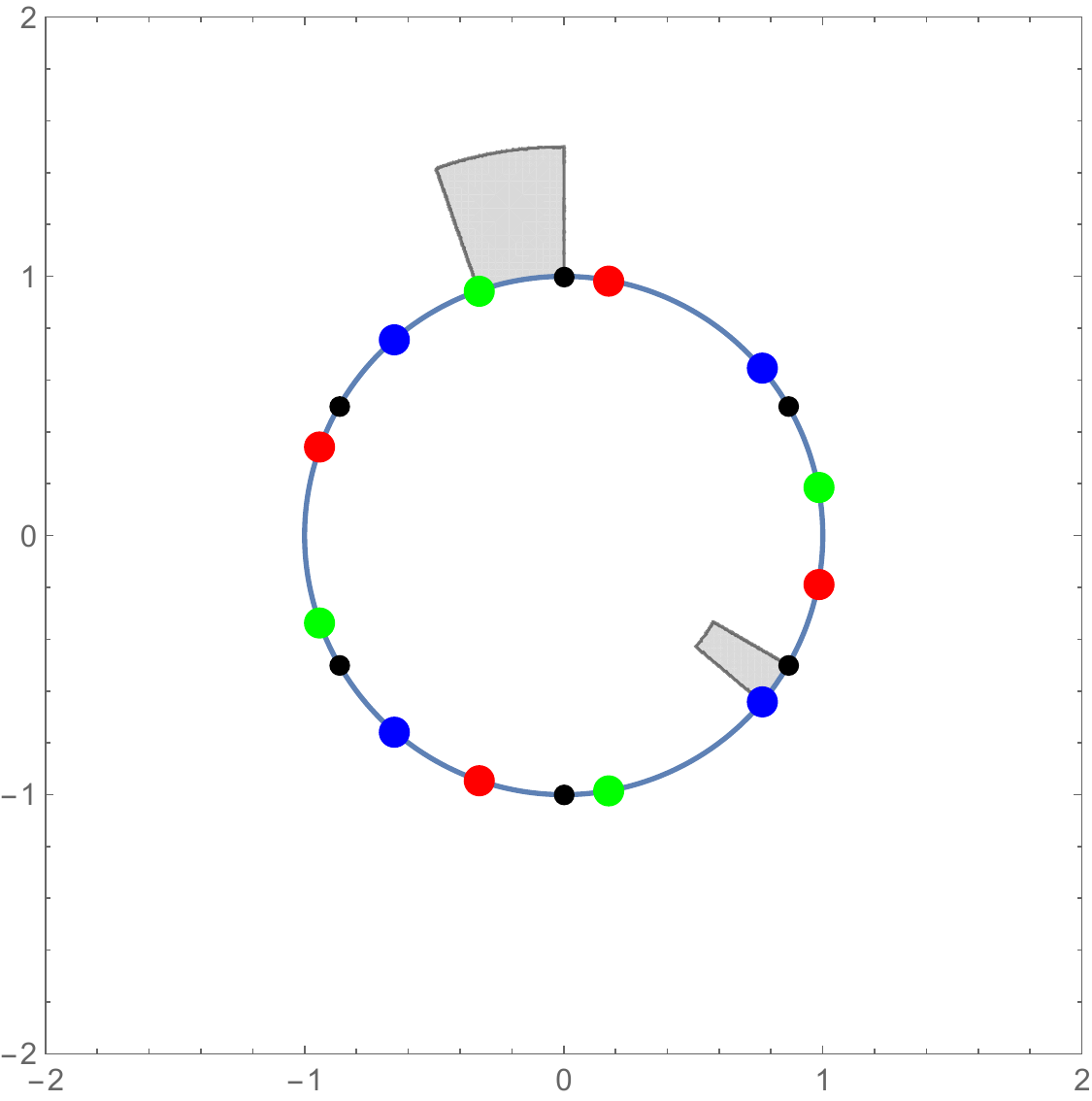}};
\node at (-0.15,1.35) {\tiny $U_{1}^{2}$};
\node at (0.8,-1.5) {\tiny $U_{1}^{2}$};
\draw[black,line width=0.15 mm,->-=1] (0.75,-1.35)--(0.75,-0.45);
\end{tikzpicture} \\
\begin{tikzpicture}[slave]
\node at (0,0) {\includegraphics[width=4.5cm]{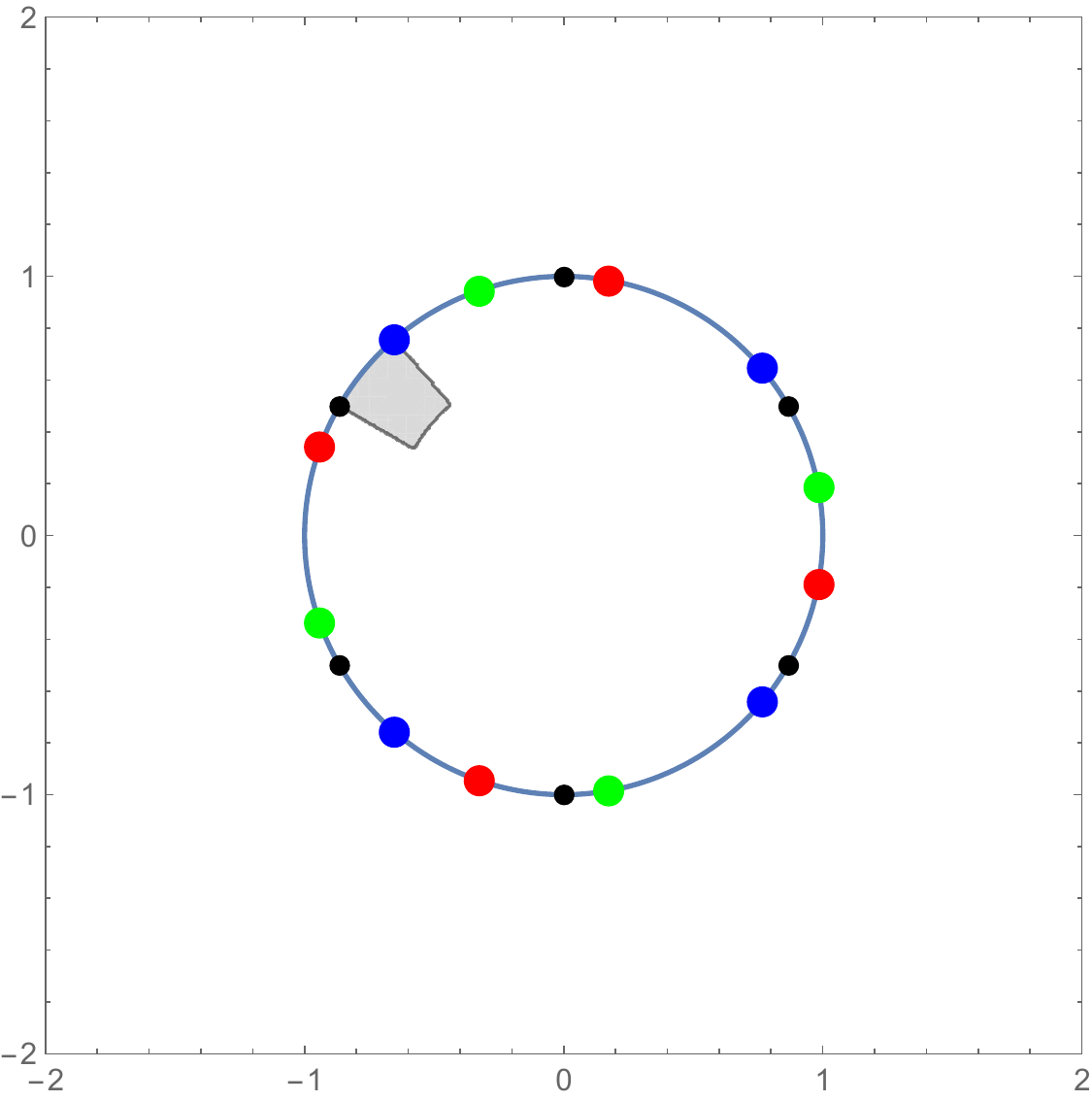}};
\node at (-0.61,0.61) {\tiny $U_{1}^{3}$};
\end{tikzpicture} \hspace{-0.05cm}
\begin{tikzpicture}[slave]
\node at (0,0) {\includegraphics[width=4.5cm]{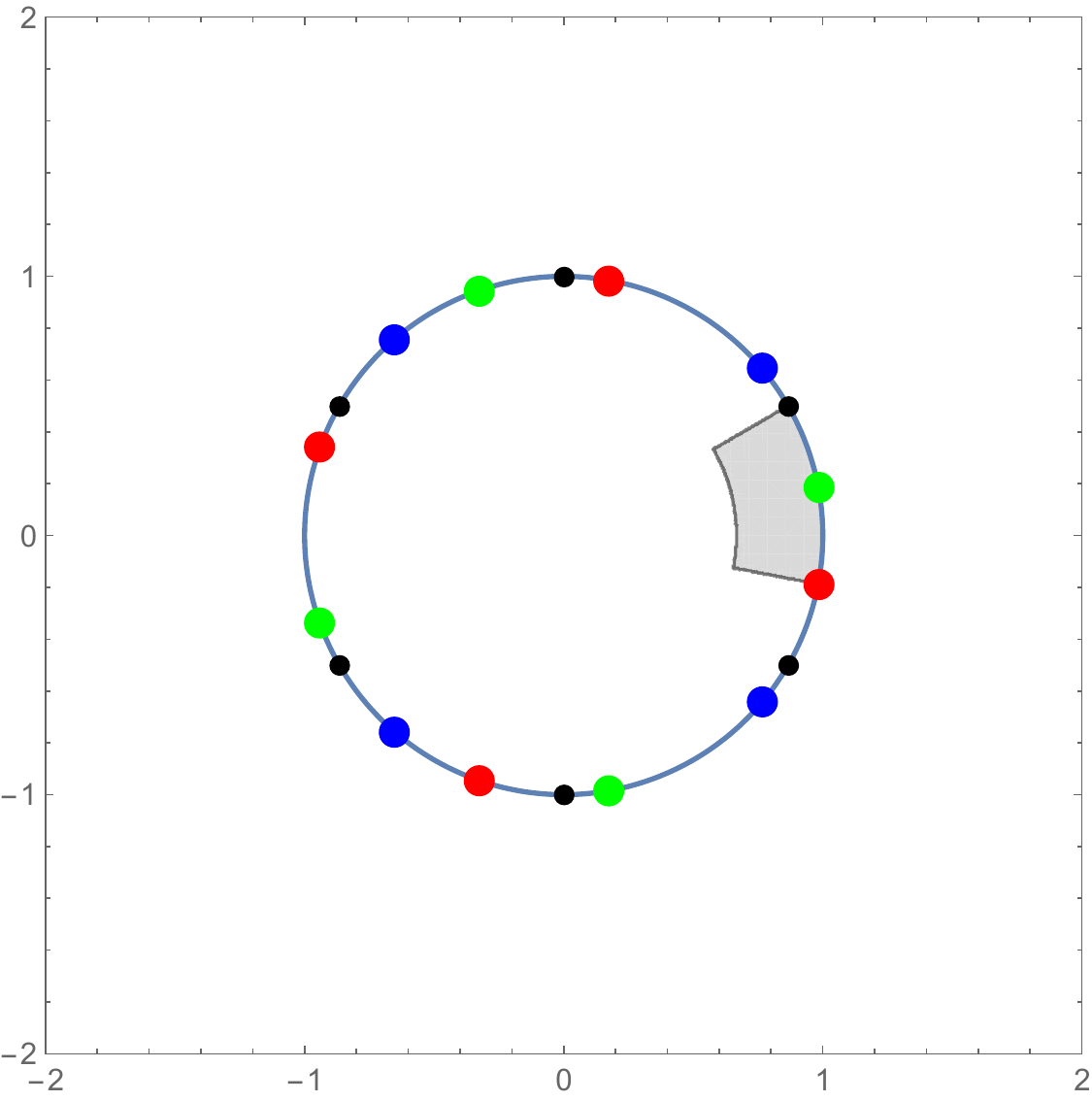}};
\node at (0.95,0.1) {\tiny $U_{1}^{4}$};
\end{tikzpicture} \hspace{-0.05cm}
\begin{tikzpicture}[slave]
\node at (0,0) {\includegraphics[width=4.5cm]{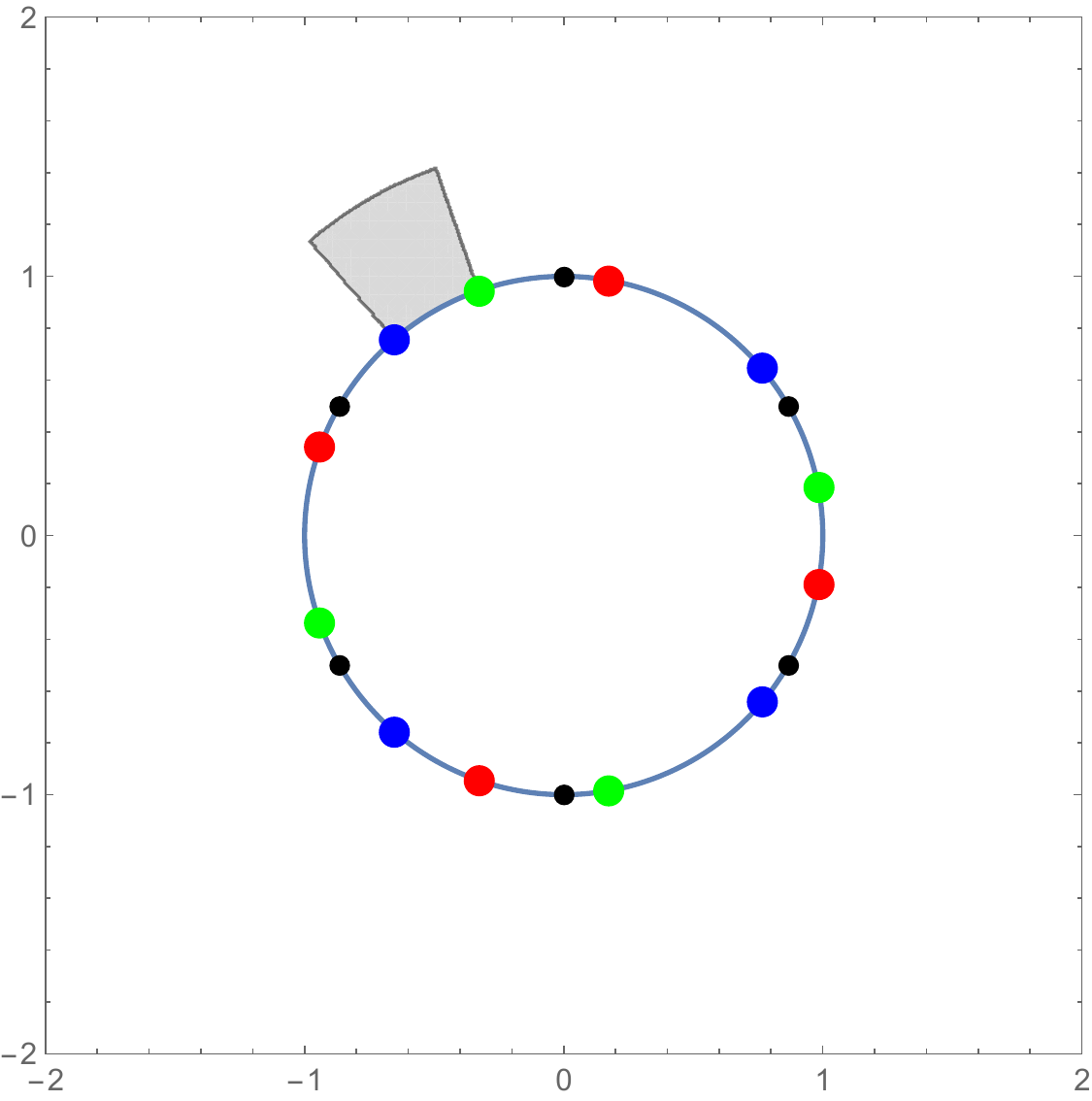}};
\node at (-0.6,1.2) {\tiny $U_{1}^{5}$};
\end{tikzpicture}
\end{center}
\begin{figuretext}
\label{IIbis fig: U1 and U2} The open subsets $\{U_{1}^{\ell}\}_{\ell=1}^{5}$ of the complex $k$-plane.
\end{figuretext}
\end{figure}

\begin{lemma}[Decomposition lemma]\label{IIbis decompositionlemma}
There exist $M>1$ and decompositions
\begin{align}
& r_{1,\ell}(k) = r_{1,\ell,a}(x, t, k) + r_{1,\ell,r}(x, t, k), & & k \in \partial U_{1}^{\ell} \cap \partial \mathbb{D}, \; \ell =1,\ldots,5, \label{IIbis decomposition lemma analytic + remainder}
\end{align}
such that the functions $\{r_{1,\ell,a},r_{1,\ell,r}\}_{\ell=1}^{5}$ have the following properties:
\begin{enumerate}[$(a)$]
\item 
For each $\zeta \in \mathcal{I}$, $t \geq 1$, and $\ell \in \{1,\dots,5\}$, $r_{1,\ell,a}(x, t, k)$ is defined and continuous for $k \in \bar{U}_1^{\ell}$ and analytic for $k \in U_1^{\ell}$.

\item For each $\zeta \in \mathcal{I}$, $t \geq 1$, and $\ell \in \{1,\dots,5\}$, the function $r_{1,\ell,a}$ satisfies
\begin{align*}
& \Big| r_{1,\ell,a}(x, t, k)-\sum_{j=0}^{N}\frac{r_{1,\ell}^{(j)}(k_{\star})}{j!}(k-k_{\star})^{j}  \Big| \leq C |k-k_{\star}|^{N+1}e^{\frac{t}{4}|\re \Phi_{21}(\zeta,k)|}, \; k \in \bar{U}_{1}^{\ell}, \; k_{\star} \in \mathcal{R}_{\ell},   
\end{align*}
where $\mathcal{R}_{1}=\{e^{5\pi i/6},\omega k_{3},-1,\omega^{2}k_{4},-i, \omega^{2}k_{3}, -\omega,k_{4}\}$, $\mathcal{R}_{2}=\{i,\omega^{2}k_{2},k_{4},e^{-\pi i/6}\}$, $\mathcal{R}_{3}=\{k_{1},e^{5\pi i/6}\}$, $\mathcal{R}_{4}=\{\omega k_{2},\omega^{2}k_{1},e^{\pi i/6}\}$, $\mathcal{R}_{5}=\{\omega^{2} k_{2}, k_{1}\}$, and the constant $C$ is independent of $\zeta, t, k$. Furthermore, for $\zeta \in \mathcal{I}$ and $t\geq 1$, we have
\begin{align*}
& \Big| r_{1,4,a}(x, t, k)-\sum_{j=0}^{N+n_{1}}\frac{[(\cdot-1)^{n_{1}}r_{1,4}(\cdot)]^{(j)}(1)}{j!}(k-1)^{j-n_{1}}  \Big| \leq C |k-1|^{N+1}e^{\frac{t}{4}|\re \Phi_{21}(\zeta,k)|}, \\
& \Big| r_{1,5,a}(x, t, k)-\sum_{j=0}^{N+n_{\omega}}\frac{[(\cdot-\omega)^{n_{\omega}}r_{1,5}(\cdot)]^{(j)}(\omega)}{j!}(k-\omega)^{j-n_{\omega}}  \Big| \leq C |k-\omega|^{N+1}e^{\frac{t}{4}|\re \Phi_{21}(\zeta,k)|},
\end{align*}
and these inequalities hold for $k \in \bar{U}_{1}^{4}$ and $k \in \bar{U}_{1}^{5}$, respectively.
\item For each $1 \leq p \leq \infty$ and $\ell \in \{1,\dots,5\}$, the $L^p$-norm of $r_{1,\ell,r}(x,t,\cdot)$ on $\partial \bar{U}_{1}^{\ell} \cap \partial \mathbb{D}$ is $O(t^{-N})$ uniformly for $\zeta \in \mathcal{I}$ as $t \to \infty$.
\end{enumerate}
\end{lemma}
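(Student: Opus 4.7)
\medskip

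\noindent\emph{Proof proposal.} The plan is to treat each index $\ell \in \{1,\dots,5\}$ by the same template, which is the standard Deift--Zhou decomposition machinery adapted to the present setting. First one parametrizes the arc $\partial U_{1}^{\ell}\cap \partial \mathbb{D}$ by $\theta \mapsto e^{i\theta}$, and observes that, away from its possible singularities, $r_{1,\ell}$ is $C^\infty$ there: for $\ell =1$ this is immediate from $r_{1}\in C^\infty(\hat{\Gamma}_1)$; for $\ell=2,3$ the denominators $1+r_{1}r_{2}$ and its reflected companion are strictly positive on the relevant arcs by \cite[Lemma 2.13]{CLmain}; for $\ell=4,5$ one uses the same lemma to locate the zeros of $f$ at $\{\pm 1,\pm \omega\}$ and factors them out explicitly by writing $\tilde{r}_{1,4}(k) := (k-1)^{n_1}r_{1,4}(k)$ and $\tilde{r}_{1,5}(k) := (k-\omega)^{n_\omega}r_{1,5}(k)$, which are $C^\infty$ on $\partial U_1^{4}\cap \partial \mathbb{D}$ and $\partial U_1^{5}\cap \partial \mathbb{D}$, respectively. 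All subsequent arguments are carried out for these cleared functions, and the pole is restored at the end by dividing back, which is compatible with the negative powers $(k-k_\star)^{j-n_1}$ and $(k-\omega)^{j-n_\omega}$ appearing in item $(b)$.

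\smallskip

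\noindent Next, fix $\ell$ and write the (cleared) function on the arc as $r_{1,\ell}(k) = p_{\ell}(k) + g_{\ell}(k)$, where $p_{\ell}$ is a rational interpolant, analytic on $\overline{U_{1}^{\ell}}$, whose Taylor jet at each $k_\star \in \mathcal{R}_\ell$ matches the corresponding jet of $r_{1,\ell}$ up to order $N$ (resp.\ $N+n_1$, $N+n_\omega$ for $\ell=4,5$). Such a $p_\ell$ is constructed as a finite sum of partial fractions centered at points outside $\overline{U_1^\ell}$ by Hermite interpolation. By construction, $g_\ell$ is $C^\infty$ and vanishes to order $N+1$ at every $k_\star \in \mathcal{R}_\ell$. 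One now performs the classical Fourier-splitting step: after a smooth extension of $g_\ell$ to a compactly supported function on $\mathbb{R}$ (in the angle variable), the Fourier transform $\hat{g}_\ell(s)$ decays faster than any power by smoothness and the prescribed vanishing at the endpoints of $\mathcal{R}_\ell$. Splitting $\hat{g}_\ell = \hat{g}_\ell\mathbf{1}_{s \ge 0} + \hat{g}_\ell\mathbf{1}_{s<0}$ and inverse-Fourier-transforming with the half-plane of analyticity chosen according to the direction in which $U_1^\ell$ leaves $\partial\mathbb{D}$, one obtains a splitting of $g_\ell$ into an analytic continuation $g_{\ell,a}$ into $U_1^\ell$ plus a remainder. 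Multiplying the transform by the cutoff $e^{t\Phi_{21}(\zeta,k)/4}$ along the half-line of integration produces the factor $e^{(t/4)|\mathrm{Re}\,\Phi_{21}(\zeta,k)|}$ in the pointwise bound on $g_{\ell,a}$, while the fast decay of $\hat{g}_\ell$ yields $\|g_{\ell,r}\|_{L^p}=O(t^{-N})$ uniformly in $\zeta \in \mathcal{I}$ by a direct integration.

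\smallskip

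\noindent Setting $r_{1,\ell,a}:=p_\ell + g_{\ell,a}$ (divided by the appropriate $(k-k_\star)^{n_\star}$ when $\ell=4,5$) and $r_{1,\ell,r}:=g_{\ell,r}$ yields the decomposition \eqref{IIbis decomposition lemma analytic + remainder} with all three properties $(a),(b),(c)$: $(a)$ follows because both $p_\ell$ and $g_{\ell,a}$ are analytic in $U_1^\ell$ and continuous up to the boundary; $(b)$ follows by adding the Taylor polynomial of $p_\ell$ (which is exact by construction) to the standard Cauchy-integral remainder for $g_{\ell,a}$, controlled by the $e^{(t/4)|\mathrm{Re}\,\Phi_{21}|}$ factor; $(c)$ is the $L^p$ bound on $g_{\ell,r}$ just obtained. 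Uniformity in $\zeta \in \mathcal{I}$ follows because the saddle points $k_j(\zeta)$ depend smoothly on $\zeta$ and remain bounded away from each other and from $\mathcal{Q}$ on the compact set $\mathcal{I}$, so the constants in the interpolation and Fourier estimates can be chosen uniform.

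\smallskip

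\noindent \emph{Expected obstacles.} The routine but bookkeeping-heavy step is the explicit construction of $p_\ell$ interpolating simultaneously at all points of $\mathcal{R}_\ell$ (there are up to eight such points for $\ell=1$), and verifying that its support of poles can be placed outside $\overline{U_1^\ell}$ consistently with the geometry in Figure \ref{IIbis fig: U1 and U2}. The genuinely nontrivial point is the pair $\ell \in \{4,5\}$: one must verify that after clearing the pole of $r_{1,\ell}$ at $1$ or $\omega$, the cleared function still has the smoothness needed at the \emph{other} points of $\mathcal{R}_\ell$ (e.g.\ at $k=\omega^2 k_2$ or $k_1$), so that the Fourier-splitting step applies uniformly; this comes down to checking that the pole order $n_1$ (resp.\ $n_\omega$) is finite and that the remaining singularities of $r_{1,4}$ and $r_{1,5}$ on $\partial U_1^\ell\cap\partial\mathbb{D}$ are removable, which is guaranteed by \cite[Lemma 2.13]{CLmain} together with $r_1\in C^\infty(\hat{\Gamma}_1)$.
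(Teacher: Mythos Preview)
Your proposal follows the same Deift--Zhou template that the paper invokes, and is in fact considerably more detailed: the paper's own proof is a single sentence citing \cite{DZ1993} after recording one observation. Your treatment of the poles at $1$ and $\omega$ for $\ell=4,5$ via the cleared functions $(k-1)^{n_1}r_{1,4}$ and $(k-\omega)^{n_\omega}r_{1,5}$ is correct and matches how the statement is set up.

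There is one technical slip. You write that the Fourier splitting is carried out ``in the angle variable'' $\theta$, but it must be done in the \emph{phase variable} $\phi = \im \Phi_{21}(\zeta,e^{i\theta}) = (\zeta-\cos\theta)\sin\theta$. The analytic continuation of $e^{is\phi}$ off $\partial\mathbb{D}$ is $e^{-s\,i\Phi_{21}(\zeta,k)}$, whose modulus is $e^{s\,\re\Phi_{21}}$; this is what ties the half-line $\{s<0\}$ (or a $t$-dependent truncation thereof) to the region $U_1^\ell\subset\{\re\Phi_{21}>0\}$ and produces the bound $e^{(t/4)|\re\Phi_{21}|}$ in (b). By contrast, splitting in $\theta$ would only distinguish $|k|<1$ from $|k|>1$, whereas $U_1^1$ and $U_1^2$ contain pieces on both sides of $\partial\mathbb{D}$. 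The change of variables $\theta\mapsto\phi$ is a diffeomorphism on each connected component of $\partial U_1^\ell\cap\partial\mathbb{D}$ precisely because $\phi$ is monotone there --- and this monotonicity is the one fact the paper bothers to state before deferring to \cite{DZ1993}. With that correction your outline goes through.
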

\begin{proof}
For each $\ell \in \{1,\ldots,5\}$, $\theta \mapsto \im \Phi_{21}(\zeta,e^{i\theta})=(\zeta-\cos \theta)\sin \theta$ is monotone on each connected component of $\partial U_{1}^{\ell}\cap \partial \mathbb{D}$. Hence the claim can be proved using the techniques of \cite{DZ1993}. Since these techniques are rather standard by now, we omit the details. 
\end{proof}

Note that $\tilde{r}(k) \in \mathbb{R}$ for $k \in \partial \mathbb{D}\setminus \{-\omega^{2},\omega^{2}\}$ and that $\tilde{r}(k)= \tilde{r}(\frac{1}{\omega k})\tilde{r}(\frac{1}{\omega^{2} k})$. By \eqref{r1r2 relation with kbar symmetry}, we thus have $r_{2,\ell}(k) = \tilde{r}(k)\overline{r_{1,\ell}(\bar{k}^{-1})}$, $k \in \partial \mathbb{D}\cap U_{1}^{\ell}$, $\ell=1,\ldots,5$.
 Using this symmetry, for each $\ell\in \{1,\ldots,5\}$ we let $U_{2}^{\ell}:=\{k|\bar{k}^{-1}\in U_{1}^{\ell}\}$ and define a decomposition $r_{2,\ell}=r_{2,\ell,a}+r_{2,\ell,r}$ by
\begin{align*}
& r_{2,\ell,a}(k) := \tilde{r}(k)\overline{r_{1,\ell,a}(\bar{k}^{-1})}, \quad k \in U_{2}^{\ell},
& & r_{2,\ell,r}(k) := \tilde{r}(k)\overline{r_{1,\ell,r}(\bar{k}^{-1})}, \quad k \in \partial U_{2}^{\ell} \cap \partial \mathbb{D}.
\end{align*}

\begin{figure}[h]
\begin{center}
\begin{tikzpicture}[master,scale=0.9]
\node at (0,0) {};
\draw[black,line width=0.65 mm] (0,0)--(30:7.5);
\draw[black,line width=0.65 mm,->-=0.45,->-=0.91] (0,0)--(90:7);
\draw[black,line width=0.65 mm] (0,0)--(150:7.5);
\draw[dashed,black,line width=0.15 mm] (0,0)--(60:7.5);
\draw[dashed,black,line width=0.15 mm] (0,0)--(120:7.5);

\draw[black,line width=0.65 mm] ([shift=(30:3*1.5cm)]0,0) arc (30:150:3*1.5cm);
\draw[black,arrows={-Triangle[length=0.27cm,width=0.18cm]}]
($(67:3*1.5)$) --  ++(-20:0.001);

\draw[black,arrows={-Triangle[length=0.27cm,width=0.18cm]}]
($(123-5:3*1.5)$) --  ++(120+90-5:0.001);

\node at (70:3.2*1.5) {\small $2$};

\node at (115:3.22*1.5) {\small $11$};

\node at (82.5:1.97*1.5) {\small $1''$};

\node at (86.5:6.15) {\small $4'$};

\node at (85:3.2*1.5) {\small $5$};
\node at (100:3.2*1.5) {\small $8$};

\draw[blue,fill] (129.688:3*1.5) circle (0.12cm);
\draw[green,fill] (110.3:3*1.5) circle (0.12cm);
\draw[red,fill] (80:3*1.5) circle (0.12cm);
\draw[blue,fill] (38.4686:3*1.5) circle (0.12cm);


\node at (40:4.2) {\small $k_3$};
\node at (80:4.1) {\small $\omega k_4$};
\node at (110:4.08) {\small $\omega^2 k_2$};
\node at (130:4.08) {\small $k_1$};

\draw[black,arrows={-Triangle[length=0.27cm,width=0.18cm]}]
($(88:3*1.5)$) --  ++(86+90:0.001);
\draw[black,arrows={-Triangle[length=0.27cm,width=0.18cm]}]
($(103:3*1.5)$) --  ++(103+90:0.001);
\end{tikzpicture}
\end{center}
\begin{figuretext}
\label{IIbis Gammap0p}The contour $\Gamma^{(0)}$ (solid), the boundary of $\mathsf{S}$ (dashed) and, from right to left, the saddle points $k_{3}$ (blue), $\omega k_{4}$ (red), $\omega^{2}k_{2}$ (green), and $k_{1}$ (blue).
\end{figuretext}
\end{figure}

We now proceed with the first transformation $n \mapsto n^{(1)}$. As explained in Section \ref{overviewsec}, we will first define this transformation in the sector $\mathsf{S}$, and then appeal to the symmetries \eqref{nsymm} to extend it to the whole complex plane. Since we will need to open lenses differently on four subsets of $\partial \mathbb{D}\cap \mathsf{S}$, we find it convenient to introduce a new contour $\Gamma^{(0)}$. This contour coincides as a set with $\Gamma\cap \mathsf{S}$, but is oriented and labeled differently, see Figure \ref{IIbis Gammap0p}. We let $\Gamma_{j}^{(0)}$ denote the subcontour of $\Gamma^{(0)}$ labeled by $j$ in Figure \ref{IIbis Gammap0p}. We emphasize that 
\begin{align*}
\Gamma_{11}^{(0)}:= \{e^{i\theta} \,|\, \theta \in (\arg(\omega^{2}k_{2}),\tfrac{2\pi}{3})\}, \qquad \Gamma_{2}^{(0)}:= \{e^{i\theta} \,|\, \theta \in (\tfrac{\pi}{3},\arg(\omega k_{4}))\}.
\end{align*}
On $\Gamma_{2}^{(0)}$, we will use the factorization
\begin{align*}
v_{9} = v_{3}^{(1)}v_{2}^{(1)}v_{1}^{(1)},
\end{align*}
where
\begin{align}
& v_{3}^{(1)} = \begin{pmatrix}
1 & 0 & -r_{2,a}(\omega^{2}k)e^{-\theta_{31}} \\
r_{1,a}(\frac{1}{k})e^{\theta_{21}} & 1 & r_{1,a}(\omega k)e^{-\theta_{32}} \\
0 & 0 & 1
\end{pmatrix}, \; v_{1}^{(1)} = \begin{pmatrix}
1 & r_{2,a}(\frac{1}{k})e^{-\theta_{21}} & 0 \\
0 & 1 & 0  \\
-r_{1,a}(\omega^{2}k)e^{\theta_{31}} & r_{2,a}(\omega k)e^{\theta_{32}} & 1
\end{pmatrix}, \nonumber \\
& v_{2}^{(1)} = I + v_{2,r}^{(1)}, \qquad v_{2,r}^{(1)}= \begin{pmatrix}
r_{1,r}(\omega^{2}k)r_{2,r}(\omega^{2}k) & g_{2}(\omega k)e^{-\theta_{21}} & -r_{2,r}(\omega^{2}k)e^{-\theta_{31}} \\
g_{1}(\omega k)e^{\theta_{21}} & g(\omega k) & h_{1}(\omega k)e^{-\theta_{32}} \\
-r_{1,r}(\omega^{2}k)e^{\theta_{31}} & h_{2}(\omega k)e^{\theta_{32}} & 0
\end{pmatrix}, \label{vp1p 123}
\end{align}
and
\begin{align*}
h_{1}(k) = & \; r_{1,r}(k) + r_{1,a}(\tfrac{1}{\omega^{2}k})r_{2,r}(\omega k), \qquad h_{2}(k) =  r_{2,r}(k) + r_{2,a}(\tfrac{1}{\omega^{2}k})r_{1,r}(\omega k), \\
g_{1}(k) = & \; r_{1,r}(\tfrac{1}{\omega^{2}k})-r_{1,r}(\omega k) \big( r_{1,r}(k)+r_{1,a}(\tfrac{1}{\omega^{2}k})r_{2,r}(\omega k) \big), \\
g_{2}(k) = & \; r_{2,r}(\tfrac{1}{\omega^{2}k})-r_{2,r}(\omega k) \big( r_{2,r}(k)+r_{2,a}(\tfrac{1}{\omega^{2}k})r_{1,r}(\omega k) \big), \\
g(k) = & \; r_{1,r}(k)\big(r_{1,r}(\omega k)r_{2,a}(\tfrac{1}{\omega^{2}k})+r_{2,r}(k)\big) \\
& +r_{1,a}(\tfrac{1}{\omega^{2}k})r_{2,r}(\omega k)\big( r_{1,r}(\omega k)r_{2,a}(\tfrac{1}{\omega^{2}k})+r_{2,r}(k) \big) + r_{1,r}(\tfrac{1}{\omega^{2}k})r_{2,r}(\tfrac{1}{\omega^{2}k}).
\end{align*}
On $\Gamma_{5}^{(0)}$, we will use the factorization
\begin{align}
& v_{9}^{-1} = v_{4}^{(1)}v_{5}^{(1)}v_{6}^{(1)}, \qquad v_{5}^{(1)} = \begin{pmatrix}
\frac{1}{1+r_{1}(\omega^{2}k)r_{2}(\omega^{2}k)} & 0 & 0 \\
0 & 1 & 0 \\
0 & 0 & 1+r_{1}(\omega^{2}k)r_{2}(\omega^{2}k)
\end{pmatrix} + v_{5,r}^{(1)}, \nonumber \\ 
& v_{4}^{(1)} = \begin{pmatrix}
1 & c_{12,a}e^{-\theta_{21}} & c_{13,a}e^{-\theta_{31}} \\
0 & 1 & 0 \\
0 & c_{32,a}e^{\theta_{32}} & 1
\end{pmatrix}, \qquad v_{6}^{(1)} = \begin{pmatrix}
1 & 0 & 0 \\
c_{21,a}e^{\theta_{21}} & 1 & c_{23,a}e^{-\theta_{32}} \\
c_{31,a}e^{\theta_{31}} & 0 & 1
\end{pmatrix}, \label{vp1p 456}
\end{align}
where
\begin{align*}
& c_{12} = -r_{2}(\tfrac{1}{k}), & & c_{13} = \hat{r}_{2}(\omega^{2}k), & & c_{23} = r_{2}(\tfrac{1}{\omega k}), \\
& c_{21} = -r_{1}(\tfrac{1}{k}), & & c_{31} = \hat{r}_{1}(\omega^{2}k), & & c_{32} = r_{1}(\tfrac{1}{\omega k}),
\end{align*}
and $c_{ij,a}, c_{ij,r}$ denote the analytic continuation and the remainder of $c_{ij}$ from Lemma \ref{IIbis decompositionlemma}, i.e., $c_{12,a} = -r_{2,1,a}(\frac{1}{k})$, $c_{12,r} = -r_{2,1,r}(\frac{1}{k})$, $c_{13,a} = -r_{2,2,a}(\omega^2 k)$, etc.

On $\Gamma_{8}^{(0)}$, we will use the factorization
\begin{align}
& v_{7} = v_{7}^{(1)}v_{8}^{(1)}v_{9}^{(1)}, \qquad v_{8}^{(1)} = \begin{pmatrix}
\frac{1}{f(k)} & 0 & 0 \\
0 & 1+r_{1}(k)r_{2}(k) & 0 \\
0 & 0 & \frac{f(k)}{1+r_{1}(k)r_{2}(k)}
\end{pmatrix} + v_{8,r}^{(1)}, \nonumber \\
& v_{7}^{(1)} = \begin{pmatrix}
1 & a_{12,a}e^{-\theta_{21}} & a_{13,a}e^{-\theta_{31}} \\
0 & 1 & 0 \\
0 & a_{32,a}e^{\theta_{32}} & 1
\end{pmatrix}, \qquad v_{9}^{(1)} = \begin{pmatrix}
1 & 0 & 0 \\
a_{21,a}e^{\theta_{21}} & 1 & a_{23,a}e^{-\theta_{32}} \\
a_{31,a}e^{\theta_{31}} & 0 & 1
\end{pmatrix}, \label{vp1p 789}
\end{align}
where
\begin{align*}
& a_{12} = -\hat{r}_{1}(k), & & a_{13} = -\frac{r_{1}(\frac{1}{\omega^{2}k})}{f(k)}, & & a_{23} = \frac{r_{2}(\frac{1}{\omega k})-r_{2}(k)r_{2}(\omega^{2}k)}{1+r_{1}(k)r_{2}(k)}, \\
& a_{21} = -\hat{r}_{2}(k), & & a_{31} = - \frac{r_{2}(\frac{1}{\omega^{2}k})}{f(k)}, & & a_{32} = \frac{r_{1}(\frac{1}{\omega k})-r_{1}(k)r_{1}(\omega^{2}k)}{1+r_{1}(k)r_{2}(k)}.
\end{align*}
and $a_{ij,a}, a_{ij,r}$ denote the analytic continuation and the remainder of $a_{ij}$ from Lemma \ref{IIbis decompositionlemma}, e.g., $a_{13,a} = -r_{1,4,a}(\frac{1}{\omega^2 k})$ and $a_{23,a} = r_{2,3,a}(\frac{1}{\omega k})$.

Finally, on $\Gamma_{11}^{(0)}$, we will use the factorization
\begin{align}
& v_{7} = v_{10}^{(1)}v_{11}^{(1)}v_{12}^{(1)}, \qquad v_{11}^{(1)} = \begin{pmatrix}
\frac{1}{f(k)} & 0 & 0 \\
0 & \frac{f(k)}{f(\omega^{2}k)} & 0 \\
0 & 0 & f(\omega^{2}k)
\end{pmatrix} + v_{11,r}^{(1)}, \nonumber \\
& v_{10}^{(1)} = \begin{pmatrix}
1 & b_{12,a}e^{-\theta_{21}} & b_{13,a}e^{-\theta_{31}} \\
0 & 1 & b_{23,a}e^{-\theta_{32}} \\
0 & 0 & 1
\end{pmatrix}, \qquad v_{12}^{(1)} = \begin{pmatrix}
1 & 0 & 0 \\
b_{21,a}e^{\theta_{21}} & 1 & 0 \\
b_{31,a}e^{\theta_{31}} & b_{32,a}e^{\theta_{32}} & 1
\end{pmatrix}, \label{vp1p 101112}
\end{align}
where
\begin{align*}
& b_{12} = - \frac{r_{1}(k)- r_{1}(\frac{1}{\omega k})r_{1}(\frac{1}{\omega^{2}k})}{f(k)}, & & b_{13} = \frac{r_{2}(\omega^{2}k)}{f(\omega^{2}k)}, & & b_{23} = \frac{r_{2}(\frac{1}{\omega k})-r_{2}(k)r_{2}(\omega^{2}k)}{f(\omega^{2}k)}, \\
& b_{21} = -\frac{r_{2}(k) - r_{2}(\frac{1}{\omega k})r_{2}(\frac{1}{\omega^{2}k})}{f(k)}, & & b_{31} = \frac{r_{1}(\omega^{2}k)}{f(\omega^{2}k)}, & & b_{32} = \frac{r_{1}(\frac{1}{\omega k})-r_{1}(k)r_{1}(\omega^{2}k)}{f(\omega^{2}k)},
\end{align*}
and $b_{ij,a}, b_{ij,r}$ denote the analytic continuation and the remainder of $b_{ij}$ from Lemma \ref{IIbis decompositionlemma}, e.g., $b_{12,a} = -r_{1,5,a}(k)$ and $b_{23,a} = r_{2,5,a}(\frac{1}{\omega k})$.
The long expressions for $v_{5,r}^{(1)}$, $v_{8,r}^{(1)}$, and $v_{11,r}^{(1)}$ are similar to $v_{2,r}^{(1)}$ and are omitted, but we mention that Lemma \ref{IIbis decompositionlemma} ensures that 
\begin{align*}
\| v_{j,r}^{(1)} \|_{(L^{1}\cap L^{\infty})(\Gamma_{j}^{(0)})} = O(t^{-1}), \qquad \mbox{as } t \to \infty, \; j= 2,5,8,11.
\end{align*}

\begin{figure}
\begin{center}
\begin{tikzpicture}[master]
\node at (0,0) {};
\draw[black,line width=0.65 mm] (0,0)--(30:7.5);
\draw[black,line width=0.65 mm,->-=0.25,->-=0.57,->-=0.71,->-=0.91] (0,0)--(90:7.5);
\draw[black,line width=0.65 mm] (0,0)--(150:7.5);
\draw[dashed,black,line width=0.15 mm] (0,0)--(60:7.5);
\draw[dashed,black,line width=0.15 mm] (0,0)--(120:7.5);

\draw[black,line width=0.65 mm] ([shift=(30:3*1.5cm)]0,0) arc (30:150:3*1.5cm);
\draw[black,arrows={-Triangle[length=0.27cm,width=0.18cm]}]
($(66:3*1.5)$) --  ++(-21:0.001);
\draw[black,arrows={-Triangle[length=0.27cm,width=0.18cm]}]
($(119:3*1.5)$) --  ++(116+90:0.001);

\draw[black,line width=0.65 mm] ([shift=(30:3*1.5cm)]0,0) arc (30:150:3*1.5cm);

\node at (68:3.15*1.5) {\scriptsize $2$};
\node at (70:3.6*1.5) {\scriptsize $1$};
\node at (70:2.47*1.5) {\scriptsize $3$};

\node at (116.5:3.155*1.5) {\scriptsize $11$};
\node at (115.5:3.65*1.5) {\scriptsize $10$};
\node at (114:2.55*1.5) {\scriptsize $12$};

\node at (99.5:1.15*1.5) {\scriptsize $1''$};
\node at (93.5:2.75*1.5) {\scriptsize $1_{r}''$};

\node at (92:6.6) {\scriptsize $4'$};
\node at (93.1:3.4*1.5) {\scriptsize $4_{r}'$};

\node at (86.5:3.17*1.5) {\scriptsize $5$};
\node at (84.5:3.55*1.5) {\scriptsize $4$};
\node at (83:2.65*1.5) {\scriptsize $6$};

\node at (100:3.17*1.5) {\scriptsize $8$};
\node at (100:3.62*1.5) {\scriptsize $7$};
\node at (100:2.45*1.5) {\scriptsize $9$};

\draw[black,arrows={-Triangle[length=0.27cm,width=0.18cm]}]
($(88:3*1.5)$) --  ++(86+90:0.001);
\draw[black,arrows={-Triangle[length=0.27cm,width=0.18cm]}]
($(103:3*1.5)$) --  ++(103+90:0.001);

\draw[black,line width=0.65 mm] (150:3.65)--($(129.688:3*1.5)+(129.688+135:0.5)$)--(129.688:3*1.5)--($(129.688:3*1.5)+(129.688+45:0.5)$)--(150:5.8);
\draw[black,line width=0.65 mm] (30:3.65)--($(38.4686:3*1.5)+(38.4686-135:0.5)$)--(38.4686:3*1.5)--($(38.4686:3*1.5)+(38.4686-45:0.5)$)--(30:5.8);
\draw[black,line width=0.65 mm,-<-=0.12,->-=0.78] (90:3.65)--($(80:3*1.5)+(80+135:0.5)$)--(80:3*1.5)--($(80:3*1.5)+(80+45:0.5)$)--(90:5.8);
\draw[black,line width=0.65 mm,->-=0.25,-<-=0.65] (90:3.65)--($(110.3:3*1.5)+(110.3-135:0.5)$)--(110.3:3*1.5)--($(110.3:3*1.5)+(110.3-45:0.5)$)--(90:5.8);
\draw[black,line width=0.65 mm,-<-=0.12,->-=0.78] (120:3.65)--($(110.3:3*1.5)+(110.3+135:0.5)$)--(110.3:3*1.5)--($(110.3:3*1.5)+(110.3+45:0.5)$)--(120:5.8);
\draw[black,line width=0.65 mm] (120:3.65)--($(129.688:3*1.5)+(129.688-135:0.5)$)--(129.688:3*1.5)--($(129.688:3*1.5)+(129.688-45:0.5)$)--(120:5.8);
\draw[black,line width=0.65 mm,-<-=0.12,->-=0.78] (60:3.65)--($(80:3*1.5)+(80-135:0.5)$)--(80:3*1.5)--($(80:3*1.5)+(80-45:0.5)$)--(60:5.8);
\draw[black,line width=0.65 mm] (60:3.65)--($(38.4686:3*1.5)+(38.4686+135:0.5)$)--(38.4686:3*1.5)--($(38.4686:3*1.5)+(38.4686+45:0.5)$)--(60:5.8);

\draw[blue,fill] (129.688:3*1.5) circle (0.12cm);
\draw[green,fill] (110.3:3*1.5) circle (0.12cm);
\draw[red,fill] (80:3*1.5) circle (0.12cm);
\draw[blue,fill] (38.4686:3*1.5) circle (0.12cm);

\draw[black,line width=0.65 mm,->-=0.6] (60:4.5)--(60:5.8);
\draw[black,line width=0.65 mm,->-=0.75] (60:3.65)--(60:4.5);
\draw[black,line width=0.65 mm,->-=0.6] (120:4.5)--(120:5.8);
\draw[black,line width=0.65 mm,->-=0.75] (120:3.65)--(120:4.5);

\node at (122:5) {\scriptsize $1_{s}$};
\node at (122.5:4.2) {\scriptsize $2_{s}$};
\node at (57:5.1) {\scriptsize $3_{s}$};
\node at (56.5:4.15) {\scriptsize $4_{s}$};
\end{tikzpicture}
\end{center}
\begin{figuretext}
\label{IIbis Gammap1p}The contour $\Gamma^{(1)}$ (solid), the boundary of $\mathsf{S}$ (dashed) and, from right to left, the saddle points $k_{3}$ (blue), $\omega k_{4}$ (red), $\omega^{2}k_{2}$ (green), and $k_{1}$ (blue).
\end{figuretext}
\end{figure}

Let $\Gamma^{(1)}$ be the contour shown in Figure \ref{IIbis Gammap1p}.
Define the piecewise analytic function $n^{(1)}$ by
\begin{align}\label{Sector IIbis first transfo}
n^{(1)}(x,t,k) = n(x,t,k)G^{(1)}(x,t,k),\qquad k \in \C \setminus \Gamma^{(1)},
\end{align}
where $G^{(1)}$ is defined for $k \in \mathsf{S}$ by
\begin{align}\label{IIbis Gp1pdef}
\hspace{-0.1cm} G^{(1)} \hspace{-0.1cm} = \hspace{-0.1cm} \begin{cases} 
v_{3}^{(1)}, & \hspace{-0.3cm} k \mbox{ on the $-$ side of }\Gamma_{2}^{(1)}, \\
(v_{1}^{(1)})^{-1}, & \hspace{-0.3cm} k \mbox{ on the $+$ side of }\Gamma_{2}^{(1)},  \\
v_{4}^{(1)}, & \hspace{-0.3cm} k \mbox{ on the $-$ side of }\Gamma_{5}^{(1)}, \\
(v_{6}^{(1)})^{-1}, & \hspace{-0.3cm} k \mbox{ on the $+$ side of }\Gamma_{5}^{(1)},
\end{cases} \; G^{(1)} \hspace{-0.1cm} = \hspace{-0.1cm} \begin{cases} 
v_{7}^{(1)}, & \hspace{-0.3cm} k \mbox{ on the $-$ side of }\Gamma_{8}^{(1)}, \\
(v_{9}^{(1)})^{-1}, & \hspace{-0.3cm} k \mbox{ on the $+$ side of }\Gamma_{8}^{(1)},  \\
v_{10}^{(1)}, & \hspace{-0.3cm} k \mbox{ on the $-$ side of }\Gamma_{11}^{(1)}, \\
(v_{12}^{(1)})^{-1}, & \hspace{-0.3cm} k \mbox{ on the $+$ side of }\Gamma_{11}^{(1)},  \\
I, & \hspace{-0.3cm} \mbox{otherwise},
\end{cases}
\end{align}
and $G^{(1)}$ is extended to $\mathbb{C}\setminus \Gamma^{(1)}$ by
\begin{align}\label{symmetry of G1}
G^{(1)}(x,t, k) = \mathcal{A} G^{(1)}(x,t,\omega k)\mathcal{A}^{-1}
 = \mathcal{B} G^{(1)}(x,t, k^{-1}) \mathcal{B}.
\end{align}
The functions $G^{(1)}$ and $n^{(1)}$ are analytic on $\C \setminus \Gamma^{(1)}$. Indeed, consider for example the small region inside the lens on the $-$ side of $\Gamma_{11}^{(1)}$; in this region $G^{(1)} = v_{10}^{(1)}$ involves the functions $r_{1,5,a}(k)$, $r_{2,5,a}(\frac{1}{\omega k})$, and $r_{2,4,a}(\omega^2 k)$, and these functions are all analytic in this region as a consequence of Lemma \ref{IIbis decompositionlemma} and the definitions of $U_1^5$, $U_2^5$, and $U_2^4$.

The next lemma easily follows from Figure \ref{IIbis fig: Re Phi 21 31 and 32 for zeta=0.7}. Note that $v_{10}^{(1)},v_{11}^{(1)},v_{12}^{(1)}$ (and hence also $n^{(1)}(x,t,k)$) are singular at $k = \omega$, because $f(\omega) = f(1) = 0$. This is why the disks $D_\epsilon(\omega^j)$ have been excluded in Lemma \ref{lemma:G1p1p}. 

\begin{lemma}\label{lemma:G1p1p}
For any $\epsilon>0$, $G^{(1)}(x,t,k)$ and $G^{(1)}(x,t,k)^{-1}$ are uniformly bounded for $k \in \mathbb{C}\setminus (\Gamma^{(1)} \cup \cup_{j=0}^{2} D_\epsilon(\omega^j))$, $t\geq 1$, and $\zeta \in \mathcal{I}$. Furthermore, $G^{(1)}(x,t,k)=I$ for all large enough $|k|$.
\end{lemma}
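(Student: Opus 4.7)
The claim that $G^{(1)}=I$ for all sufficiently large $|k|$ is the easiest part. By inspection of the contour in Figure~\ref{IIbis Gammap1p}, the lenses opened in $\mathsf{S}$ lie in a bounded annulus around the unit circle, so on $\mathsf{S}\setminus \Gamma^{(1)}$ we have $G^{(1)}(x,t,k)=I$ whenever $|k|>R_0$ (and also whenever $|k|<1/R_0$), for some $R_0$ depending only on the radii of the lenses. The $\mathcal{A}$- and $\mathcal{B}$-symmetries \eqref{symmetry of G1} then propagate $G^{(1)}=I$ to the remaining sectors: the $\mathcal{A}$-symmetry preserves $|k|$ and covers two further $60^\circ$ sectors, while the $\mathcal{B}$-symmetry exchanges $|k|>R_0$ and $|k|<1/R_0$ in the remaining sectors. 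In every case $\mathcal{A}I\mathcal{A}^{-1}=I$ and $\mathcal{B}I\mathcal{B}=I$, so $G^{(1)}(x,t,k)=I$ for all $|k|$ large enough (uniformly in $t\geq 1$ and $\zeta\in \mathcal{I}$).

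For the uniform bound, by the symmetries \eqref{symmetry of G1} and the unitarity of $\mathcal{A},\mathcal{B}$, it suffices to show that both $G^{(1)}$ and $(G^{(1)})^{-1}$ are uniformly bounded for $k \in \mathsf{S}\setminus\bigl(\Gamma^{(1)}\cup D_{\epsilon}(1)\cup D_{\epsilon}(\omega)\bigr)$, $t\geq 1$, $\zeta\in \mathcal{I}$; the disks around $1$ and $\omega$ absorb, under the $\mathcal{A},\mathcal{B}$ extensions, the disks around $\omega^2$. Inside $\mathsf{S}$, by \eqref{IIbis Gp1pdef} the matrix $G^{(1)}$ equals either $I$ or one of the eight unipotent triangular matrices $v^{(1)}_{1},v^{(1)}_{3},v^{(1)}_{4},v^{(1)}_{6},v^{(1)}_{7},v^{(1)}_{9},v^{(1)}_{10},v^{(1)}_{12}$, or an inverse of one of these. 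Since each factor is unipotent, both $G^{(1)}$ and $(G^{(1)})^{-1}$ are uniformly bounded as soon as each off-diagonal entry that appears in any of these factors is uniformly bounded on the corresponding sub-lens.

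The off-diagonal entries all have the form $r_{\bullet,\ell,a}(\varphi(k))\,e^{\pm \theta_{ij}(x,t,k)}$, where $\varphi$ is one of the substitutions $k\mapsto k,\omega k,\omega^2 k,k^{-1},(\omega k)^{-1},(\omega^2 k)^{-1}$ dictated by \eqref{vp1p 123}--\eqref{vp1p 101112}, and $r_{\bullet,\ell,a}$ is an analytic continuation supplied by Lemma~\ref{IIbis decompositionlemma}. The lens positions in Figure~\ref{IIbis Gammap1p} have been chosen so that, inside each sub-lens, the sign of $\pm\re \Phi_{ij}$ is the \emph{negative} sign, i.e.\ $|e^{\pm \theta_{ij}}|=e^{\pm t\,\re \Phi_{ij}}\leq 1$; this is exactly what the signature tables in Figure~\ref{IIbis fig: Re Phi 21 31 and 32 for zeta=0.7} record, after translating $\Phi_{31},\Phi_{32}$ to $\Phi_{21}$ via \eqref{relations between the different Phi}. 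Combining with the bound $|r_{\bullet,\ell,a}(\varphi(k))|\leq C e^{\frac{t}{4}|\re \Phi_{21}(\zeta,\varphi(k))|}$ from Lemma~\ref{IIbis decompositionlemma}(b) and using \eqref{relations between the different Phi} to identify $|\re \Phi_{21}(\zeta,\varphi(k))|$ with $|\re \Phi_{ij}(\zeta,k)|$, one finds each entry majorised by $C e^{-\frac{3t}{4}|\re \Phi_{ij}(\zeta,k)|}\leq C$, uniformly in $t\geq 1$ and $\zeta\in \mathcal{I}$. The same estimate applies to the entries of $(v_j^{(1)})^{-1}$, which are simply the negatives of those of $v_j^{(1)}$ modulo products of the same type.

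The one genuine subtlety, and the main obstacle, is that four of the factors ($v^{(1)}_{7},v^{(1)}_{9},v^{(1)}_{10},v^{(1)}_{12}$) involve the analytic continuations of $r_{1,4}=r_1/f$ and $r_{1,5}=(r_1-r_1(\tfrac{1}{\omega\cdot})r_1(\tfrac{1}{\omega^2\cdot}))/f$, which possess poles at $k=1$ and $k=\omega$ of orders at most $n_1$ and $n_\omega$ respectively (see Lemma~\ref{IIbis decompositionlemma}(b)). Staying at distance $\geq \epsilon$ from $1$ and $\omega$ bounds these polar contributions by a constant depending only on $\epsilon$. Uniformity in $\zeta\in \mathcal{I}$ follows because the saddle points $k_1(\zeta),\omega k_4(\zeta),\omega^2 k_2(\zeta),k_3(\zeta)$ depend continuously on $\zeta$ and stay at a positive distance from the singularities $\{\pm 1,\pm\omega,\pm\omega^2\}$ of $r_1,r_2$ on $\mathcal{I}$, so that the decomposition in Lemma~\ref{IIbis decompositionlemma} can be performed with $\zeta$-independent constants. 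This completes the proof.
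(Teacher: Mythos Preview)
Your proposal is correct and takes the same approach as the paper, which simply states that the lemma ``easily follows from Figure~\ref{IIbis fig: Re Phi 21 31 and 32 for zeta=0.7}'' (the signature tables). You have supplied the details the paper omits: reducing to $\mathsf{S}$ via the $\mathcal{A},\mathcal{B}$-symmetries, observing that each $G^{(1)}$-value is a unipotent matrix whose off-diagonal entries are of the form $r_{\bullet,a}(\varphi(k))e^{\pm\theta_{ij}}$, and bounding these by combining the sign information from the signature tables with the growth estimate of Lemma~\ref{IIbis decompositionlemma}(b). One minor sharpening: in fact only $v_{10}^{(1)}$ and $v_{12}^{(1)}$ genuinely require the excision of $D_\epsilon(\omega)$, since the lens around $\Gamma_8^{(1)}$ stays uniformly away from $\omega$ for $\zeta\in\mathcal{I}$ and so the pole of $r_{1,4,a}(\tfrac{1}{\omega^2 k})$ at $k=\omega$ is never approached there; your treatment is still correct, just slightly over-cautious on this point.
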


By construction, $n^{(1)}_{+}=n^{(1)}_{-}v_{j}^{(1)}$ on $\Gamma^{(1)}_{j}$, where $v_{j}^{(1)}$ is given by \eqref{vp1p 123} for $j=1,2,3$, by \eqref{vp1p 456} for $j=4,5,6$, by \eqref{vp1p 789} for $j=7,8,9$, by \eqref{vp1p 101112} for $j=10,11,12$, and
\begin{align}
v_{1''}^{(1)} = & \; v_{1''}, \quad v_{4'}^{(1)} = v_{4'}, \quad v_{1_{r}''}^{(1)} = v_{6}^{(1)} v_{1''}(v_{9}^{(1)})^{-1}, \quad v_{4_{r}'}^{(1)} = (v_{4}^{(1)})^{-1}v_{4'}v_{7}^{(1)}, \nonumber \\
 v_{1_s}^{(1)} = &\; G_-^{(1)}(k)^{-1}G_+^{(1)}(k)
= v_{10}^{(1)}(k)^{-1} \mathcal{A} \mathcal{B} v_{12}^{(1)}(\tfrac{1}{\omega k})^{-1} \mathcal{B} \mathcal{A}^{-1}  \nonumber
	\\ \label{v1sexpression}
=&\; \begin{pmatrix}
 1 & -(b_{12,a}(k)+b_{32,a}(\frac{1}{\omega k}))e^{-\theta_{21}} & (v_{1_s}^{(1)})_{13}
 \\
 0 & 1 & -(b_{21,a}(\frac{1}{\omega k})+b_{23,a}(k))e^{-\theta_{32}} \\
 0 & 0 & 1 \\
\end{pmatrix},
	\\ \nonumber
 (v_{1_s}^{(1)})_{13} := &\;  \big(b_{12,a}(k) (b_{21,a}(\tfrac{1}{\omega k})+b_{23,a}(k)) +b_{21,a}(\tfrac{1}{\omega k}) b_{32,a}(\tfrac{1}{\omega k}) -b_{13,a}(k) -b_{31,a}(\tfrac{1}{\omega k})\big) e^{-\theta_{31}}.
\end{align}
The expressions for $v_{2_{s}}, v_{3_{s}}, v_{4_{s}}$ are similar to \eqref{v1sexpression} and we do not write them down. 

\section{The $n^{(1)} \to n^{(2)}$ transformation}
We now proceed with a second opening of lenses. 
We will use the following factorizations (the matrices with subscripts $u$ and $d$ will be used to deform contours up and down, respectively, see \eqref{IIbis Gp2pdef} below):
\begin{align}\nonumber
& v_{1}^{(1)}= v_{1}^{(2)}v_{1u}^{(2)}, \quad v_{1}^{(2)} = \begin{pmatrix}
1 & 0 & 0 \\
0 & 1 & 0 \\
-r_{1,a}(\omega^{2}k)e^{\theta_{31}} & 0 & 1
\end{pmatrix}, 
	\\ 
&\hspace{2.8cm} v_{1u}^{(2)} = \begin{pmatrix}
1 & r_{2,a}(\frac{1}{k})e^{-\theta_{21}} & 0 \\
0 & 1 & 0 \\
0 & \big(r_{1,a}(\omega^2 k)r_{2,a}(\frac{1}{k}) + r_{2,a}(\omega k)\big)e^{\theta_{32}} & 1
\end{pmatrix}, \nonumber 
	\\ \nonumber
& v_{3}^{(1)}= v_{3d}^{(2)}v_{3}^{(2)}, \; 
v_{3d}^{(2)} = \begin{pmatrix}
1 & 0 & 0 \\
r_{1,a}(\frac{1}{k})e^{\theta_{21}} & 1 & \big(r_{1,a}(\omega k) + r_{1,a}(\frac{1}{k}) r_{2,a}(\omega^2 k)\big)e^{-\theta_{32}} \\
0 & 0 & 1
\end{pmatrix}, 
	\\ 
&\hspace{2.7cm} v_{3}^{(2)} = \begin{pmatrix}
1 & 0 & -r_{2,a}(\omega^{2}k)e^{-\theta_{31}} \\
0 & 1 & 0 \\
0 & 0 & 1
\end{pmatrix}, \nonumber \\
& v_{4}^{(1)}=v_{4u}^{(2)}v_{4}^{(2)}, \quad v_{4u}^{(2)} = \begin{pmatrix}
1 & c_{12,a}e^{-\theta_{21}} & 0 \\
0 & 1 & 0 \\
0 & c_{32,a}e^{\theta_{32}} & 1
\end{pmatrix}, \quad v_{4}^{(2)} = \begin{pmatrix}
1 & 0 & c_{13,a}e^{-\theta_{31}} \\
0 & 1 & 0 \\
0 & 0 & 1
\end{pmatrix}, \nonumber \\
& v_{6}^{(1)}=v_{6}^{(2)}v_{6d}^{(2)}, \quad v_{6}^{(2)} = \begin{pmatrix}
1 & 0 & 0 \\
0 & 1 & 0 \\
c_{31,a}e^{\theta_{31}} & 0 & 1
\end{pmatrix}, \quad v_{6d}^{(2)} = \begin{pmatrix}
1 & 0 & 0 \\
c_{21,a}e^{\theta_{21}} & 1 & c_{23,a}e^{-\theta_{32}} \\
0 & 0 & 1
\end{pmatrix}, \nonumber \\
& v_{7}^{(1)} = v_{7u}^{(2)}v_{7}^{(2)}, \; v_{7}^{(2)} = \begin{pmatrix}
1 & \hspace{-0.13cm} 0 & \hspace{-0.13cm} 0 \\
0 & \hspace{-0.13cm} 1 & \hspace{-0.13cm} 0 \\
0 & \hspace{-0.13cm} a_{32,a}e^{\theta_{32}} & \hspace{-0.13cm} 1
\end{pmatrix}, \; v_{7u}^{(2)} = \begin{pmatrix}
1 & \hspace{-0.13cm} (a_{12,a}-a_{13,a}a_{32,a})e^{-\theta_{21}} & \hspace{-0.18cm} a_{13,a}e^{-\theta_{31}} \\
0 & \hspace{-0.13cm} 1 & \hspace{-0.13cm} 0 \\
0 & \hspace{-0.13cm} 0 & \hspace{-0.13cm} 1
\end{pmatrix}\hspace{-0.1cm}, \nonumber \\
& v_{9}^{(1)} = v_{9}^{(2)}v_{9d}^{(2)}, \quad v_{9d}^{(2)} = \begin{pmatrix}
1 & 0 & 0 \\
(a_{21,a}-a_{23,a}a_{31,a})e^{\theta_{21}} & 1 & 0 \\
a_{31,a}e^{\theta_{31}} & 0 & 1
\end{pmatrix}, \quad v_{9}^{(2)} = \begin{pmatrix}
1 & 0 & 0 \\
0 & 1 & a_{23,a}e^{-\theta_{32}} \\
0 & 0 & 1
\end{pmatrix}, \nonumber \\
& v_{10}^{(1)} = v_{10u}^{(2)} v_{10}^{(2)}, \; v_{10}^{(2)} = \begin{pmatrix}
1 & \hspace{-0.13cm} 0 & \hspace{-0.13cm} 0 \\
0 & \hspace{-0.13cm} 1 & \hspace{-0.13cm} b_{23,a}e^{-\theta_{32}} \\
0 & \hspace{-0.13cm} 0 & \hspace{-0.13cm} 1
\end{pmatrix}\hspace{-0.1cm}, \; v_{10u}^{(2)} = \begin{pmatrix}
1 & \hspace{-0.18cm} b_{12,a}e^{-\theta_{21}} & \hspace{-0.2cm} (b_{13,a}-b_{12,a}b_{23,a})e^{-\theta_{31}} \\
0 & \hspace{-0.13cm} 1 & \hspace{-0.13cm} 0 \\
0 & \hspace{-0.13cm} 0 & \hspace{-0.13cm} 1
\end{pmatrix}\hspace{-0.1cm}, \nonumber \\
& v_{12}^{(1)} = v_{12}^{(2)} v_{12d}^{(2)}, \; v_{12d}^{(2)} = \begin{pmatrix}
1 & 0 & 0 \\
b_{21,a}e^{\theta_{21}} & 1 & 0 \\
(b_{31,a}-b_{21,a}b_{32,a})e^{\theta_{31}} & 0 & 1
\end{pmatrix}, \; v_{12}^{(2)} = \begin{pmatrix}
1 & 0 & 0 \\
0 & 1 & 0 \\
0 & b_{32,a}e^{\theta_{32}} & 1
\end{pmatrix}. \label{Vv2def}
\end{align}
\begin{figure}
\begin{center}
\begin{tikzpicture}[master]
\node at (0,0) {};
\draw[black,line width=0.65 mm] (0,0)--(30:7.5);
\draw[black,line width=0.65 mm,->-=0.25,->-=0.57,->-=0.71,->-=0.91] (0,0)--(90:7.5);
\draw[black,line width=0.65 mm] (0,0)--(150:7.5);
\draw[dashed,black,line width=0.15 mm] (0,0)--(60:7.5);
\draw[dashed,black,line width=0.15 mm] (0,0)--(120:7.5);

\draw[black,line width=0.65 mm] ([shift=(30:3*1.5cm)]0,0) arc (30:150:3*1.5cm);
\draw[black,arrows={-Triangle[length=0.27cm,width=0.18cm]}]
($(66:3*1.5)$) --  ++(-21:0.001);
\draw[black,arrows={-Triangle[length=0.27cm,width=0.18cm]}]
($(119:3*1.5)$) --  ++(116+90:0.001);

\draw[black,line width=0.65 mm] ([shift=(30:3*1.5cm)]0,0) arc (30:150:3*1.5cm);

\node at (68:3.14*1.5) {\scriptsize $2$};
\node at (70:3.6*1.5) {\scriptsize $1$};
\node at (68:2.73*1.5) {\scriptsize $3$};

\node at (116.5:3.155*1.5) {\scriptsize $11$};
\node at (114.5:3.52*1.5) {\scriptsize $10$};
\node at (114:2.6*1.5) {\scriptsize $12$};

\node at (93.7:2.72*1.5) {\scriptsize $1_{r}''$};
%
\node at (93:3.4*1.5) {\scriptsize $4_{r}'$};

\node at (86.5:3.12*1.5) {\scriptsize $5$};
\node at (84.5:3.5*1.5) {\scriptsize $4$};
\node at (84.4:2.62*1.5) {\scriptsize $6$};

\node at (100:3.15*1.5) {\scriptsize $8$};
\node at (100:3.58*1.5) {\scriptsize $7$};
\node at (102.8:2.57*1.5) {\scriptsize $9$};

\draw[black,arrows={-Triangle[length=0.27cm,width=0.18cm]}]
($(88:3*1.5)$) --  ++(86+90:0.001);
\draw[black,arrows={-Triangle[length=0.27cm,width=0.18cm]}]
($(103:3*1.5)$) --  ++(103+90:0.001);

\draw[black,line width=0.65 mm] (150:3.65)--($(129.688:3*1.5)+(129.688+135:0.5)$)--(129.688:3*1.5)--($(129.688:3*1.5)+(129.688+45:0.5)$)--(150:5.8);
\draw[black,line width=0.65 mm] (30:3.65)--($(38.4686:3*1.5)+(38.4686-135:0.5)$)--(38.4686:3*1.5)--($(38.4686:3*1.5)+(38.4686-45:0.5)$)--(30:5.8);
\draw[black,line width=0.65 mm,-<-=0.12,->-=0.78] (90:3.65)--($(80:3*1.5)+(80+135:0.5)$)--(80:3*1.5)--($(80:3*1.5)+(80+45:0.5)$)--(90:5.8);
\draw[black,line width=0.65 mm,->-=0.25,-<-=0.65] (90:3.65)--($(110.3:3*1.5)+(110.3-135:0.5)$)--(110.3:3*1.5)--($(110.3:3*1.5)+(110.3-45:0.5)$)--(90:5.8);
\draw[black,line width=0.65 mm,-<-=0.12,->-=0.78] (120:3.65)--($(110.3:3*1.5)+(110.3+135:0.5)$)--(110.3:3*1.5)--($(110.3:3*1.5)+(110.3+45:0.5)$)--(120:5.8);
\draw[black,line width=0.65 mm] (120:3.65)--($(129.688:3*1.5)+(129.688-135:0.5)$)--(129.688:3*1.5)--($(129.688:3*1.5)+(129.688-45:0.5)$)--(120:5.8);
\draw[black,line width=0.65 mm,-<-=0.12,->-=0.78] (60:3.65)--($(80:3*1.5)+(80-135:0.5)$)--(80:3*1.5)--($(80:3*1.5)+(80-45:0.5)$)--(60:5.8);
\draw[black,line width=0.65 mm] (60:3.65)--($(38.4686:3*1.5)+(38.4686+135:0.5)$)--(38.4686:3*1.5)--($(38.4686:3*1.5)+(38.4686+45:0.5)$)--(60:5.8);

\draw[black,line width=0.65 mm,->-=0.6] (60:4.5)--(60:5.8);
\draw[black,line width=0.65 mm,->-=0.75] (60:3.65)--(60:4.5);
\draw[black,line width=0.65 mm,->-=0.6] (120:4.5)--(120:5.8);
\draw[black,line width=0.65 mm,->-=0.75] (120:3.65)--(120:4.5);

\node at (122:5) {\scriptsize $1_{s}$};
\node at (122.5:4.2) {\scriptsize $2_{s}$};
\node at (57:5.1) {\scriptsize $3_{s}$};
\node at (56.5:4.15) {\scriptsize $4_{s}$};

\node at (77:5.2) {\scriptsize $5_{s}$};
\node at (76.5:4) {\scriptsize $6_{s}$};
\node at (107.2:5.13) {\scriptsize $7_{s}$};
\node at (106.8:4) {\scriptsize $8_{s}$};

\draw[black,line width=0.65 mm] (129.688:3.65)--(129.688:5.8);
\draw[black,line width=0.65 mm,-<-=0.12,->-=0.78] (110.3:3.65)--(110.3:5.8);
\draw[black,line width=0.65 mm,-<-=0.12,->-=0.78] (80:3.65)--(80:5.8);
\draw[black,line width=0.65 mm] (38.4686:3.65)--(38.4686:5.8);

\draw[black,line width=0.65 mm] ([shift=(30:3.65cm)]0,0) arc (30:150:3.65cm);
\draw[black,line width=0.65 mm] ([shift=(30:5.8cm)]0,0) arc (30:150:5.8cm);

\draw[blue,fill] (129.688:3*1.5) circle (0.12cm);
\draw[green,fill] (110.3:3*1.5) circle (0.12cm);
\draw[red,fill] (80:3*1.5) circle (0.12cm);
\draw[blue,fill] (38.4686:3*1.5) circle (0.12cm);

\draw[black,line width=0.65 mm,->-=0.6] (60:4.5)--(60:5.8);
\draw[black,line width=0.65 mm,->-=0.75] (60:3.65)--(60:4.5);
\draw[black,line width=0.65 mm,->-=0.6] (120:4.5)--(120:5.8);
\draw[black,line width=0.65 mm,->-=0.75] (120:3.65)--(120:4.5);
\end{tikzpicture}
\end{center}
\begin{figuretext}
\label{IIbis Gammap2p}The contour $\Gamma^{(2)}$ (solid), the boundary of $\mathsf{S}$ (dashed) and, from right to left, the saddle points $k_{3}$ (blue), $\omega k_{4}$ (red), $\omega^{2}k_{2}$ (green), and $k_{1}$ (blue).
\end{figuretext}
\end{figure}
Let $\Gamma^{(2)}$ be the contour displayed in Figure \ref{IIbis Gammap2p}, and let $\Gamma_{j}^{(2)}$ be the subcontour of $\Gamma^{(2)}$ labeled by $j$ in Figure \ref{IIbis Gammap2p}. Define the piecewise analytic function $n^{(2)}$ by
\begin{align}\label{Sector IIbis second transfo}
n^{(2)}(x,t,k) = n^{(1)}(x,t,k)G^{(2)}(x,t,k), \qquad k \in \C \setminus \Gamma^{(2)},
\end{align}
where $G^{(2)}$ is defined for $k \in \mathsf{S}$ by
\begin{align}\label{IIbis Gp2pdef}
G^{(2)}(x,t,k) = \begin{cases} 
(v_{1u}^{(2)})^{-1}, & \hspace{-0.15cm} k \mbox{ above }\Gamma_{1}^{(2)}, \\
v_{3d}^{(2)}, & \hspace{-0.15cm}k \mbox{ below }\Gamma_{3}^{(2)}, \\
v_{4u}^{(2)}, & \hspace{-0.15cm}k \mbox{ above }\Gamma_{4}^{(2)}, \\
(v_{6d}^{(2)})^{-1}, & \hspace{-0.15cm}k \mbox{ below }\Gamma_{6}^{(2)},
\end{cases} \;\; G^{(2)}(x,t,k) = \begin{cases} 
v_{7u}^{(2)}, & \hspace{-0.15cm}k \mbox{ above }\Gamma_{7}^{(2)}, \\
(v_{9d}^{(2)})^{-1}, & \hspace{-0.15cm}k \mbox{ below }\Gamma_{9}^{(2)}, \\
v_{10u}^{(2)}, & \hspace{-0.15cm}k \mbox{ above }\Gamma_{10}^{(2)}, \\
(v_{12d}^{(2)})^{-1}, & \hspace{-0.15cm}k \mbox{ below }\Gamma_{12}^{(2)}, \\
I, & \hspace{-0.15cm}\mbox{otherwise},
\end{cases}
\end{align}
and $G^{(2)}$ is extended to $\mathbb{C}\setminus \Gamma^{(2)}$ using the $\mathcal{A}$- and $\mathcal{B}$-symmetries (as in \eqref{symmetry of G1}). The following lemma follows easily from Lemma \ref{IIbis decompositionlemma} and Figure \ref{IIbis fig: Re Phi 21 31 and 32 for zeta=0.7}.
\begin{lemma}
For any $\epsilon >0$, $G^{(2)}(x,t,k)$ and $G^{(2)}(x,t,k)^{-1}$ are uniformly bounded for $k \in \mathbb{C}\setminus (\Gamma^{(2)}\cup \cup_{j=0}^{2}D_{\epsilon}(\omega^{j}))$, $t\geq 1$, and $\zeta \in \mathcal{I}$. Furthermore, $G^{(2)}(x,t,k)=I$ for large enough $|k|$.
\end{lemma}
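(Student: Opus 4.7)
Since $G^{(2)}$ is extended to the full plane by the $\mathcal{A}$- and $\mathcal{B}$-symmetries, the plan is to verify the bounds only in the sector $\mathsf{S}$ and let the symmetries propagate them; the excluded set $\bigcup_{j=0}^{2}D_{\epsilon}(\omega^{j})$ is invariant under both symmetries, so this reduction is legitimate. The proof then reduces to a uniform bound, on the appropriate lens region, for each of the eight triangular factors
\[
v_{1u}^{(2)},\; v_{3d}^{(2)},\; v_{4u}^{(2)},\; v_{6d}^{(2)},\; v_{7u}^{(2)},\; v_{9d}^{(2)},\; v_{10u}^{(2)},\; v_{12d}^{(2)}
\]
(and their inverses) appearing in \eqref{IIbis Gp2pdef}.

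Each of these matrices is unitriangular with off-diagonal entries of the form $\phi(k)\,e^{\pm \theta_{ij}}$, so the boundedness splits into an analytic estimate on $\phi$ and a sign check on the exponent. For the analytic prefactors, which are built from $r_{1,\ell,a}, r_{2,\ell,a}$ and their pullbacks under $k\mapsto 1/(\omega^{j}k)$ (assembled as $a_{ij,a}, b_{ij,a}, c_{ij,a}$), I will invoke item $(a)$ of Lemma \ref{IIbis decompositionlemma}, which furnishes continuity on the closures of $U_{1}^{\ell}$ and $U_{2}^{\ell}$. The only obstructions are $r_{1,4,a}$ and $r_{1,5,a}$ (and their Schwarz/pullback counterparts), which inherit from $r_{1,4}$ and $r_{1,5}$ the singularities at $k=1$ and $k=\omega$ caused by the vanishing of $f$. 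Under the pullbacks $k\mapsto 1/(\omega^{j}k)$ these singularities propagate to $k\in\{1,\omega,\omega^{2}\}$, which is precisely why the disks $D_{\epsilon}(\omega^{j})$ are removed; outside of them, the prefactors are bounded uniformly for $(\zeta,t)\in\mathcal{I}\times[1,\infty)$.

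For the exponential factors I will read off the signs from Figure \ref{IIbis fig: Re Phi 21 31 and 32 for zeta=0.7}, using the relations \eqref{relations between the different Phi} to pass between the three signature tables. The point of the factorization \eqref{Vv2def} is that each "upward" factor $v_{\bullet u}^{(2)}$ carries only exponentials with non-positive real part above its contour, and each "downward" factor $v_{\bullet d}^{(2)}$ carries only exponentials with non-positive real part below its contour. For example, $v_{1u}^{(2)}$ contains $e^{-\theta_{21}}$ and $e^{\theta_{32}}$, and the lens region above $\Gamma_{1}^{(2)}$ sits in $\{\re\Phi_{21}>0\}\cap\{\re\Phi_{32}<0\}$, so both exponentials are $\leq 1$ in modulus; seven analogous checks dispose of the remaining factors. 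The inverses are controlled on the same region with the opposite-sign exponentials, which are bounded by the same analysis.

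Finally, the large-$|k|$ claim is already built into the decomposition: Lemma \ref{IIbis decompositionlemma} defines $r_{j,\ell,a}$ only on the bounded sets $U_{1}^{\ell}, U_{2}^{\ell}$, so the lens contours $\Gamma_{j}^{(2)}$ for $j\in\{1,3,4,6,7,9,10,12\}$ lie in a fixed bounded region (cf.\ Figure \ref{IIbis Gammap2p}); outside this region every branch of \eqref{IIbis Gp2pdef} returns $I$, and the symmetry extension preserves this. The main obstacle is purely bookkeeping: the interplay between the three signature tables, the lens geometry, and the cancellations that confine the singularities of $a_{ij,a},b_{ij,a},c_{ij,a}$ to $\{\omega^{j}\}_{j=0}^{2}$ must be tracked simultaneously across eight factors, but no analytic ingredient beyond Lemma \ref{IIbis decompositionlemma} and Figure \ref{IIbis fig: Re Phi 21 31 and 32 for zeta=0.7} is required.
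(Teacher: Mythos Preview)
Your proposal is correct and follows exactly the approach the paper indicates: the lemma is stated to follow easily from Lemma \ref{IIbis decompositionlemma} and Figure \ref{IIbis fig: Re Phi 21 31 and 32 for zeta=0.7}, and you have simply fleshed out those two ingredients. One minor slip: the inverse of a unitriangular matrix carries the \emph{same} exponential factors as the original (not opposite-sign ones), but since those entries are bounded the inverse entries are bounded as polynomial combinations of them, so your conclusion is unaffected.
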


The jumps $v_j^{(2)}$ of $n^{(2)}$ are given for $j = 1, \dots, 12$ by (\ref{Vv2def}) and by
\begin{align}\nonumber
& v_2^{(2)} = v_2^{(1)}, \quad v_5^{(2)} = v_5^{(1)}, \quad v_{8}^{(2)} = v_8^{(1)}, \quad v_{11}^{(2)} = v_{11}^{(1)}, \quad v_{j}^{(2)} = v_{j}^{(1)} \text{ for } j = 1_s, \dots, 4_s,
	\\ \label{II jumps vj diagonal p2p}
&  
v_{5_s}^{(2)} =  v_{1u}^{(2)} v_{4u}^{(2)}, \quad
v_{6_s}^{(2)} =  v_{6d}^{(2)} v_{3d}^{(2)},\quad
v_{7_s}^{(2)} =  (v_{7u}^{(2)})^{-1} v_{10u}^{(2)},\quad
v_{8_s}^{(2)} =  v_{12d}^{(2)} (v_{9d}^{(2)})^{-1}.
\end{align}


The jumps on the parts of $\Gamma^{(2)}\cap \mathsf{S}$ that are unnumbered in Figure \ref{IIbis Gammap2p} are small as $t \to \infty$, so we do not write them down. The matrix $v^{(2)}$ on $\Gamma^{(2)}\setminus \mathsf{S}$ can be obtained using \eqref{vjsymm}. The next lemma is proved in the same way as \cite[Lemma 8.5]{CLmain}.

\begin{lemma}\label{vsmallnearilemmaV}
The $L^\infty$-norm of $v^{(2)} - I$ on $\Gamma_{4_r'}^{(2)} \cup \Gamma_{1_r''}^{(2)}$ is $O(t^{-N})$ as $t \to \infty$ uniformly for $\zeta \in \mathcal{I}$.
\end{lemma}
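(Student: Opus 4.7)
My approach is to reduce the claim to an estimate on $v^{(1)}$, expand the triple products in \eqref{v1sexpression} explicitly, and bound each entry via exponential decay on the relevant segments. First I would note that by \eqref{II jumps vj diagonal p2p}, the transformation $n^{(1)} \mapsto n^{(2)}$ leaves the jumps on $\Gamma_{1_r''}$ and $\Gamma_{4_r'}$ unchanged, so $v^{(2)} = v^{(1)}$ there and it suffices to bound $v^{(1)} - I$ in $L^{\infty}$ on these segments. By \eqref{v1sexpression}, $v_{1_r''}^{(1)} = v_{6}^{(1)} v_{1''}(v_{9}^{(1)})^{-1}$, and a direct expansion shows that its off-diagonal entries are all of the form (scalar)$\cdot e^{\pm \theta_{ij}}$ with $(i,j) \in \{(2,1),(2,3),(3,1)\}$, where the scalars are sums and products of analytic extensions $r_{j,\ell,a}$ together with $r_{1}(\tfrac{1}{k})$.

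Next I would bound the scalars and exponentials separately. The scalar coefficients are uniformly bounded for $k \in \Gamma_{1_r''}$, $t \geq 1$, and $\zeta \in \mathcal{I}$ by Lemma \ref{IIbis decompositionlemma}(a)--(b) (applied with $N=0$) together with the smoothness of $r_{1}, r_{2}$ established in \cite[Theorem 2.3]{CLmain}. For the exponentials, the key geometric observation is that $\Gamma_{1_r''}$ lies on the imaginary ray $\Gamma_{1}$ strictly inside the unit disk: by the lens geometry of Figure \ref{IIbis Gammap1p}, it sits between the inward lens endpoints emanating from $\omega k_{4}$ and $\omega^{2} k_{2}$, so $|k| \leq 1-\eta$ there for some $\eta > 0$ uniform in $\zeta \in \mathcal{I}$. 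A direct computation from \eqref{def of Phi ij} and \eqref{lmexpressions intro} yields $\re \Phi_{21}(\zeta,k) = -(1-|k|^{4})/(4|k|^{2})$ on $\Gamma_{1}$, hence $\re \Phi_{21} \leq -c < 0$ on $\Gamma_{1_r''}$ with $c > 0$ uniform in $\zeta \in \mathcal{I}$; the signs of $\re \Phi_{31}$ and $\re(-\Phi_{32})$ on $\Gamma_{1_r''}$ are likewise negative, which can be checked using the rotations $\Phi_{31}(\zeta,k) = -\Phi_{21}(\zeta,\omega^{2}k)$ and $\Phi_{32}(\zeta,k) = \Phi_{21}(\zeta,\omega k)$ in \eqref{relations between the different Phi}, or read off Figure \ref{IIbis fig: Re Phi 21 31 and 32 for zeta=0.7}. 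Each off-diagonal entry of $v_{1_r''}^{(1)}$ is therefore $O(e^{-ct})$, which is $O(t^{-N})$ for every $N$.

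The contour $\Gamma_{4_r'}^{(2)}$ is treated identically, using $v_{4_r'}^{(1)} = (v_{4}^{(1)})^{-1} v_{4'} v_{7}^{(1)}$ from \eqref{v1sexpression}. There $\Gamma_{4_r'}$ lies on $\Gamma_{4}$ outside the unit disk, and the requisite sign of $\re \Phi_{ij}$ is provided by the $k \mapsto 1/k$ Schwarz symmetry, giving $|k| \geq 1+\eta$ and the analogous exponential decay. The main obstacle will be to verify that all three relevant exponentials are simultaneously exponentially small on the full extent of each segment, uniformly in $\zeta \in \mathcal{I}$: as $\zeta \to 1/\sqrt{3}$, the saddle points $\omega k_{4}$ and $\omega^{2} k_{2}$ approach coalescence, which would degrade the rate $c$; compactness of $\mathcal{I} \subset (0,\tfrac{1}{\sqrt{3}})$ precludes this and yields the needed uniformity. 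This is precisely the type of argument already carried out in \cite[Lemma 8.5]{CLmain}.
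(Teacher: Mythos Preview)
Your argument has a genuine gap in the exponential-decay step. You claim that $\Gamma_{1_r''}$ ``sits between the inward lens endpoints $\dots$ so $|k|\le 1-\eta$ there,'' but this misreads the geometry of Figure~\ref{IIbis Gammap1p}: the two inward lens contours emanating from $\omega k_4$ and $\omega^2 k_2$ meet the imaginary axis at the \emph{same} point (the inner lens endpoint), and $\Gamma_{1_r''}$ is precisely the segment of $i\R_+$ from that point \emph{up to the unit circle} at $k=i$. Your own formula $\re\Phi_{21}(\zeta,iy)=-(1-y^4)/(4y^2)$ shows that $\re\Phi_{21}\to 0$ as $y\to 1$, so there is no uniform constant $c>0$ with $\re\Phi_{21}\le -c$ on $\Gamma_{1_r''}$, and the bound $|e^{\theta_{21}}|\le e^{-ct}$ fails near $k=i$. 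The same happens for $\Phi_{31}$ and $\Phi_{32}$ (all three vanish on $\partial\D$ along the imaginary axis), and the dual statement fails on $\Gamma_{4_r'}$ near $k=i$ as well. So the scheme ``bounded scalar $\times$ exponentially small factor'' cannot give $O(t^{-N})$ uniformly on these segments.

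The mechanism actually used (cf.\ \cite[Lemma 8.5]{CLmain}) is not raw exponential decay but rather that the \emph{scalar} coefficients in $v_6^{(1)} v_{1''} (v_9^{(1)})^{-1}$ and $(v_4^{(1)})^{-1} v_{4'} v_7^{(1)}$ are themselves $O(t^{-N})$ near $k=i$. This relies on two ingredients you did not invoke: Assumption~$(iii)$, which gives $r_1(i)=r_2(i)=0$ and hence (via \eqref{r1r2 relation on the unit circle}--\eqref{r1r2 relation with kbar symmetry}) forces the various $a_{ij}$ and $c_{ij}$ to \emph{coincide} at $k=i$; and the Taylor-type estimates of Lemma~\ref{IIbis decompositionlemma}(b) at the relevant points $k_\star$ (note that $-i$, $e^{5\pi i/6}$, $i$, etc.\ appear in the sets $\mathcal{R}_\ell$), which control the analytic continuations by $C|k-k_\star|^{N+1}e^{\frac{t}{4}|\re\Phi_{21}|}$. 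Combining the vanishing at $k=i$ with these estimates and the quadratic vanishing of $\re\Phi_{21}$ there is what produces the $O(t^{-N})$ bound; pure exponential decay away from $i$ handles the rest of the segment. Your reduction to $v^{(1)}$ is correct (since $G^{(2)}=I$ on both sides of $\Gamma_{1_r''}$ and $\Gamma_{4_r'}$, even though \eqref{II jumps vj diagonal p2p} does not literally say this), but the heart of the estimate near the unit circle is missing.
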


The next lemma shows that the jumps $\smash{v_{j}^{(2)}}$, $j = 1_s, \dots, 8_s$, also are small for large $t$.

\begin{lemma}\label{symmetryjumpslemma}
It is possible to choose the analytic approximations so that the $L^\infty$-norm of $v_j^{(2)} - I$ on $\Gamma_j^{(2)}$, $j = 1_s, \dots, 8_s$, is $O(t^{-N})$ as $t \to \infty$ uniformly for $\zeta \in \mathcal{I}$.
\end{lemma}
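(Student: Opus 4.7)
The plan is to exploit algebraic identities satisfied by $r_{1},r_{2}$, and $f$ on $\partial \mathbb{D}$ together with a compatible choice of the analytic approximations produced by Lemma \ref{IIbis decompositionlemma}, so as to reduce each jump $v_{j_{s}}^{(2)} - I$ to a polynomial expression in which every monomial contains at least one remainder factor $r_{i,\ell,r}$; the bound of Lemma \ref{IIbis decompositionlemma}(c) then yields the conclusion.

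The first step is to establish the symmetry $f(k)=f(\tfrac{1}{\omega^{2}k})$, which follows directly from \eqref{def of f} and $\omega^{3}=1$ and is in fact valid on all of $\mathbb{C}\setminus\{0\}$. Combined with the symmetry relations \eqref{r1r2 relation on the unit circle} and \eqref{r1r2 relation with kbar symmetry}, this yields identities such as
\begin{equation*}
b_{12}(k)=-r_{1,5}(k),\qquad b_{32}(\tfrac{1}{\omega k})=r_{1,5}(k),\qquad b_{13}(k)=r_{2,4}(\omega^{2}k),\qquad b_{31}(\tfrac{1}{\omega k})=r_{1,4}(\tfrac{1}{\omega^{2}k}),
\end{equation*}
on $\partial \mathbb{D}$ (and analogues for the entries appearing in $v_{2_{s}}^{(1)},v_{3_{s}}^{(1)},v_{4_{s}}^{(1)}$, as well as for the products in \eqref{II jumps vj diagonal p2p}). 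These are exactly the identities that make the principal (undecomposed) parts of each $v_{j_{s}}^{(2)} - I$ vanish on $\partial \mathbb{D}$; a direct check for $v_{1_{s}}^{(1)}$ shows that, upon inserting the above identities into the formula \eqref{v1sexpression}, each entry of the upper-triangular block reduces to $0$.

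The second step is to make these identities descend to the analytic approximations. The decomposition in Lemma \ref{IIbis decompositionlemma} is not unique and leaves enough freedom to impose consistency: once $r_{1,4,a}, r_{1,5,a}, r_{2,4,a}, r_{2,5,a}$ (and their $\mathcal{B}$-counterparts) have been fixed, one defines the analytic parts of the coefficients $b_{ij}, a_{ij}, c_{ij}$ through composition, e.g.\ $b_{32,a}(k):=r_{1,5,a}(\tfrac{1}{\omega k})$, $b_{23,a}(k):=r_{2,5,a}(\tfrac{1}{\omega k})$, $b_{13,a}(k):=r_{2,4,a}(\omega^{2}k)$, $b_{31,a}(k):=r_{1,4,a}(\omega^{2}k)$, and similarly for the coefficients entering $v_{2_{s}}^{(2)}, \ldots, v_{8_{s}}^{(2)}$. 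Since the maps $k\mapsto \tfrac{1}{\omega k}$ and $k\mapsto \omega^{2}k$ are biholomorphic on $\mathbb{C}\setminus\{0\}$, these definitions give analytic functions on the correct neighborhoods and inherit the Taylor-approximation estimates of Lemma \ref{IIbis decompositionlemma}(b). With this bookkeeping, each identity from Step 1 extends to the analytic parts themselves by analyticity, and the analytic (principal) part of each $v_{j_{s}}^{(2)} - I$ vanishes identically. What remains is a polynomial in the $r_{i,\ell,a}$ and $r_{i,\ell,r}$ in which every monomial carries at least one remainder factor; since the $r_{i,\ell,a}$ and the exponentials $e^{\pm \theta_{ij}}$ are uniformly bounded on $\Gamma_{j_{s}}$ (by Lemma \ref{lemma:G1p1p} and by the signature analysis of $\re \Phi_{ij}$ in Figure \ref{IIbis fig: Re Phi 21 31 and 32 for zeta=0.7}, which shows the correct signs along each $\Gamma_{j_{s}}$), Lemma \ref{IIbis decompositionlemma}(c) delivers the required $L^{\infty}$-bound of order $t^{-N}$. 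The treatment of $j=5_{s},\ldots,8_{s}$ is analogous: from \eqref{II jumps vj diagonal p2p} one writes each jump as a product of the form $v_{iu}^{(2)}v_{i'u}^{(2)}$ or $v_{id}^{(2)}v_{i'd}^{(2)}$, and the cancellation of the principal parts on $\partial \mathbb{D}$ follows from the same algebraic identities combined with the factorizations \eqref{Vv2def}.

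The main obstacle will be the bookkeeping in the compatibility step. The same approximation $r_{1,\ell,a}$ (respectively $r_{2,\ell,a}$) appears evaluated at $k$, at $\omega^{j}k$, and at $\tfrac{1}{\omega^{j}k}$ throughout the eight jumps, and one must verify that a single choice of each $r_{1,\ell,a}, r_{2,\ell,a}$ simultaneously satisfies all the symmetry identities from Step 1, while also respecting the Taylor-expansion requirements of Lemma \ref{IIbis decompositionlemma}(b) at every saddle point listed in $\mathcal{R}_{\ell}$. In practice this amounts to checking that the involutions $k\mapsto \tfrac{1}{\omega k}$ and $k \mapsto \omega^{2}k$ map the sets $\mathcal{R}_{\ell}$ into each other compatibly with the assignments of $r_{1,\ell}$ to the various regions $U_{1}^{\ell}$, a finite case-by-case verification that can be read off directly from the definitions of $U_{1}^{\ell}$ and $\mathcal{R}_{\ell}$.
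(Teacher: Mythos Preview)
Your proposal follows essentially the same strategy as the paper's proof: use the algebraic identities on $\partial\mathbb{D}$ (chiefly \eqref{r1r2 relation on the unit circle} and the symmetry $f(k)=f(\tfrac{1}{\omega^{2}k})$) to see that the jumps would equal $I$ if the undecomposed functions were used, then choose compatible decompositions so that what remains carries a remainder factor. The paper's own proof is just a sketch and defers the verification to \cite[Proof of Lemma 9.2]{CLmain}, so at the level of detail you are matching the paper.

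There is, however, one genuine gap in your outline. You assert that with compatible choices ``the analytic (principal) part of each $v_{j_{s}}^{(2)}-I$ vanishes identically.'' For the \emph{linear} entries this is correct: once you set $b_{32,a}(k):=r_{1,5,a}(\tfrac{1}{\omega k})$ and $b_{23,a}(k):=r_{2,5,a}(\tfrac{1}{\omega k})$, the $(1,2)$ and $(2,3)$ entries of $v_{1_{s}}^{(1)}$ vanish identically off $\partial\mathbb{D}$. But the $(1,3)$ entry then reduces (after inserting those two identities) to
\[
-\bigl[r_{1,5,a}(k)\,r_{2,5,a}(\tfrac{1}{\omega k})+r_{2,4,a}(\omega^{2}k)+r_{1,4,a}(\tfrac{1}{\omega^{2}k})\bigr]e^{-\theta_{31}},
\]
which is a \emph{nonlinear} combination of independently chosen approximants. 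The bracketed expression vanishes on $\partial\mathbb{D}$ for the undecomposed functions (this is the ``long but straightforward'' calculation the paper mentions), but there is no mechanism making it vanish off $\partial\mathbb{D}$ for the $a$-parts, and your $\Gamma_{1_{s}}$ is radial, not on $\partial\mathbb{D}$. The same difficulty occurs for the $(3,2)$ entry of $v_{5_{s}}^{(2)}$, which involves $r_{1,a}(\omega^{2}k)r_{2,a}(\tfrac{1}{k})+r_{2,a}(\omega k)+r_{1,a}(\tfrac{1}{\omega k})$. Moreover, near $k=\omega$ the individual summands blow up like $|k-\omega|^{-n_{\omega}}$ or $|k-\omega|^{-n_{1}}$, so even boundedness of these entries on $\Gamma_{1_{s}}$ relies on pole cancellation that you have not arranged for the analytic parts. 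Your proposed ``compatibility check'' on the sets $\mathcal{R}_{\ell}$ addresses linear consistency but does not force these nonlinear cancellations; the missing ingredient is precisely what the reference to \cite{CLmain} is supplying---typically one either decomposes the composite nonlinear expressions directly, or redefines one of the $a$-parts (say $b_{13,a}$) so as to absorb the product term.
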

\begin{proof}
In the case of compactly supported data $u_{0},v_{0}$, we can choose $r_{1,a} = r_1$, $r_{2,a} = r_2$ and $a_{ij,a}=a_{ij}$, $b_{ij,a}=b_{ij}$, $c_{ij,a}=c_{ij}$ for all $1\leq i,j \leq 3$, and then long but straightforward calculations using (\ref{r1r2 relation on the unit circle}) show that $\smash{v_{j_{s}}^{(2)}}=I$ for $j = 1, \dots, 8$. In the general case of non-compactly supported data, the matrices $\smash{v_{j_s}^{(2)}}$ are non-trivial, but it is still possible to choose decompositions such that they remain close to $I$ as $t \to \infty$. The proof is similar to \cite[Proof of Lemma 9.2]{CLmain} so we omit it.
\end{proof}

Thus $v^{(2)}-I$ is $O(t^{-N})$ as $t \to \infty$, uniformly for $k \in \Gamma^{(2)} \setminus \big(\cup_{j=0}^{2}\cup_{\ell=1}^{4} D_{\epsilon}(\omega^{j} k_{\ell}) \cup \partial \D \big)$. 
The next transformation $n^{(2)}\to n^{(3)}$ makes $v^{(3)}-I$ uniformly small as $t \to \infty$ for $k \in \partial \mathbb{D}$. To prepare ourselves for this transformation, we first construct a global parametrix.

\section{Global parametrix}
For each $\zeta \in \mathcal{I}$, let
\begin{align*}
\delta_{1}(\zeta, \cdot): \mathbb{C}\setminus \Gamma_5^{(2)} \to \mathbb{C}, \quad \delta_{2}(\zeta, \cdot), \delta_{3}(\zeta, \cdot): \mathbb{C}\setminus \Gamma_8^{(2)}\to \mathbb{C}, \quad \delta_{4}(\zeta, \cdot), \delta_{5}(\zeta, \cdot): \mathbb{C}\setminus \Gamma_{11}^{(2)} \to \mathbb{C}
\end{align*}
be analytic with continuous boundary values $\{\delta_{j,\pm}\}_{j=1}^{5}$ satisfying
\begin{align*}
& \delta_{1,+}(\zeta, k) = \delta_{1,-}(\zeta, k)(1 + r_{1}(\omega^{2}k)r_{2}(\omega^{2}k)), & &  k \in \Gamma_{5}^{(2)}, \\
& \delta_{2,+}(\zeta, k) = \delta_{2,-}(\zeta, k)(1 + r_{1}(k)r_{2}(k)), & &  k \in \Gamma_{8}^{(2)}, \\
& \delta_{3,+}(\zeta, k) = \delta_{3,-}(\zeta, k)f(k), & &  k \in \Gamma_{8}^{(2)}, \\
& \delta_{4,+}(\zeta, k) = \delta_{4,-}(\zeta, k)f(k), & &  k \in \Gamma_{11}^{(2)}, \\
& \delta_{5,+}(\zeta, k) = \delta_{5,-}(\zeta, k)f(\omega^{2}k), & &  k \in \Gamma_{11}^{(2)},
\end{align*}
and such that
\begin{align}\label{IIbis delta asymp at inf}
\delta_{j}(\zeta, k) = 1 + O(k^{-1}), \qquad k \to \infty, \; j=1,\ldots,5.
\end{align}
By \cite[Lemma 2.13]{CLmain}, 
\begin{align*}
& 1 + r_{1}(\omega^{2}k)r_{2}(\omega^{2}k)>0 \mbox{ for all } k \in \overline{\Gamma_{5}^{(2)}}, \quad 1 + r_{1}(k)r_{2}(k)>0 \mbox{ for all } k \in \overline{\Gamma_{8}^{(2)}}, \\
& f(k)>0 \mbox{ for all } k \in \overline{\Gamma_{8}^{(2)}}, \; f(k)>0 \mbox{ for all } k \in \overline{\Gamma_{11}^{(2)}}\setminus\{\omega\}, \; f(\omega^{2}k)>0 \mbox{ for all } k \in \overline{\Gamma_{11}^{(2)}}\setminus\{\omega \}.
\end{align*}
Hence the solutions to the RH problems for $\delta_{1},\delta_{2},\delta_{3}$ are unique and given by \eqref{def of deltaj}. It also follows from  \cite[Lemma 2.13]{CLmain} that $f(\omega)=f(1)=0$, and therefore the solutions to the above RH problems for $\delta_{4},\delta_{5}$ are not unique. It turns out that the solutions relevant for us are those defined in \eqref{def of deltaj}.

The next lemma establishes several propreties of $\{\delta_{j}\}_{j=1}^{5}$. We omit the proof which consists of an analysis of the definitions \eqref{def of deltaj} and uses the assumption that $r_1 = 0$ on $[0,i]$.

\begin{lemma}\label{IIbis deltalemma}
The functions $\delta_{j}(\zeta, k)$, $j=1,\ldots,5$, have the following properties:
\begin{enumerate}[$(a)$]
\item The functions $\{\delta_{j}(\zeta,k)\}_{j=1}^{5}$ can be written as
\begin{align*}
& \delta_{1}(\zeta,k) = \exp \Big( -i \nu_{1} \ln_{\omega k_{4}}(k-\omega k_{4}) + i \nu_{3}  \ln_{i}(k-i) -\chi_{1}(\zeta,k) \Big), \\ 
& \delta_{2}(\zeta,k) = \exp \Big( i \nu_{2} \ln_{\omega^{2} k_{2}}(k-\omega^{2} k_{2}) -\chi_{2}(\zeta,k) \Big), \\
& \delta_{3}(\zeta,k) = \exp \Big( - i \nu_{3} \ln_{i}(k-i) + i \nu_{4} \ln_{\omega^{2} k_{2}}(k-\omega^{2} k_{2}) -\chi_{3}(\zeta,k) \Big), \\
& \delta_{4}(\zeta,k) = \exp \Big( -i \nu_{4} \ln_{\omega^{2} k_{2}}(k-\omega^{2} k_{2}) - \chi_{4}(\zeta,k) \Big), \\
& \delta_{5}(\zeta,k) = \exp \Big( -i \nu_{5} \ln_{\omega^{2} k_{2}}(k-\omega^{2} k_{2}) - \chi_{5}(\zeta,k) \Big),
\end{align*}
where the branches for the logarithms are described below \eqref{def of chij}, the functions $\{\chi_{j}\}_{j=1}^{3}$ are defined in \eqref{def of chij}, the functions $\{\chi_{j}\}_{j=4}^{5}$ are defined in the same way as $\{\tilde{\chi}_{j}\}_{j=4}^{5}$ in \eqref{def of chij} except that $\tilde{\ln}_{s}$ is replaced by $\ln_{s}$ and $\tilde{\ln}_{\omega}$ is replaced by $\ln_{\omega}$, and
\begin{align*}
& \nu_{1} := \nu_{1}(\omega^{2}k_{4}) = -\frac{1}{2\pi}\ln(1+r_{1}(k_{4})r_{2}(k_{4})), \; \nu_{2} := \nu_{2}(k_{2}) =  -\frac{1}{2\pi}\ln(1+r_{1}(\omega^{2} k_{2})r_{2}(\omega^{2} k_{2})), \\
& \nu_{3} := \nu_{3}(\omega^{2}i) = - \frac{1}{2\pi}\ln(f(i))
= - \frac{1}{2\pi}\ln(1+r_{1}(\omega^{2} i)r_{2}(\omega^{2} i)) , \\
& \nu_{4} := \nu_{4}(k_{2}) = - \frac{1}{2\pi}\ln(f(\omega^{2} k_{2})),  \quad \nu_{5} := \nu_{3}(k_{2})= - \frac{1}{2\pi} \ln(f(\omega k_{2})).
\end{align*}
\item The functions $\{\delta_{j}(\zeta,k)\}_{j=2}^{5}$ can be written as
\begin{align*}
& \delta_{2}(\zeta,k) = \exp \Big( i \nu_{2} \tilde{\ln}_{\omega^{2} k_{2}}(k-\omega^{2} k_{2}) -\tilde{\chi}_{2}(\zeta,k) \Big), \\
& \delta_{3}(\zeta,k) = \exp \Big( - i \nu_{3} \tilde{\ln}_{i}(k-i) + i \nu_{4} \tilde{\ln}_{\omega^{2} k_{2}}(k-\omega^{2} k_{2}) -\tilde{\chi}_{3}(\zeta,k) \Big), \\
& \delta_{4}(\zeta,k) = \exp \Big( -i \nu_{4} \tilde{\ln}_{\omega^{2} k_{2}}(k-\omega^{2} k_{2}) - \tilde{\chi}_{4}(\zeta,k) \Big), \\
& \delta_{5}(\zeta,k) = \exp \Big( -i \nu_{5} \tilde{\ln}_{\omega^{2} k_{2}}(k-\omega^{2} k_{2}) - \tilde{\chi}_{5}(\zeta,k) \Big),
\end{align*}
where the branches for the logarithms are described below \eqref{def of chij}, and $\tilde{\chi}_{j}(\zeta,k)$ is defined in the same way as $\chi_{j}(\zeta,k)$ except that $\ln_{s}$ is replaced by $\tilde{\ln}_{s}$ and $\ln_{\omega}$ is replaced by $\tilde{\ln}_{\omega}$.
\item For each $\zeta \in \mathcal{I}$ and $j \in \{1,\ldots,5\}$, $\delta_{j}(\zeta, k)$ and $\delta_{j}(\zeta, k)^{-1}$ are analytic functions of $k$ in their respective domains of definition. Furthermore, for any $\epsilon > 0$,
\begin{align}
& \sup_{\zeta \in \mathcal{I}} \sup_{\substack{k \in \C \setminus \Gamma_{5}^{(2)}}} |\delta_{1}(\zeta,k)^{\pm 1}| < \infty, \quad \sup_{\zeta \in \mathcal{I}} \sup_{\substack{k \in \C \setminus \Gamma_{8}^{(2)}}} |\delta_{2}(\zeta,k)^{\pm 1}| < \infty, \quad \sup_{\zeta \in \mathcal{I}} \sup_{\substack{k \in \C \setminus \Gamma_{8}^{(2)}}} |\delta_{3}(\zeta,k)^{\pm 1}| < \infty, \nonumber \\
& \sup_{\zeta \in \mathcal{I}} \sup_{\substack{k \in \C \setminus \Gamma_{11}^{(2)} \\ |k-\omega|>\epsilon}} |\delta_{4}(\zeta,k)^{\pm 1}| < \infty, \quad \sup_{\zeta \in \mathcal{I}} \sup_{\substack{k \in \C \setminus \Gamma_{11}^{(2)} \\ |k-\omega|>\epsilon}} |\delta_{5}(\zeta,k)^{\pm 1}| < \infty. \label{IIbis delta1bound}
\end{align}
\item As $k \to \omega k_{4}$ along a path which is nontangential to $\partial \mathbb{D}$, we have
\begin{align}
& |\chi_{1}(\zeta,k)-\chi_{1}(\zeta,\omega k_{4})| \leq C |k-\omega k_{4}| (1+|\ln|k-\omega k_{4}||),  \label{II asymp chi at k1}
\end{align}
and as $k \to \omega^{2} k_{2}$ along a path which is nontangential to $\partial \mathbb{D}$, we have
\begin{align}
& |\chi_{j}(\zeta,k)-\chi_{j}(\zeta,\omega^{2} k_{2})| \leq C |k-\omega^{2} k_{2}| (1+|\ln|k-\omega^{2} k_{2}||), & & j=2,3,4,5, \label{II asymp chi at omega2k2}
\end{align}
where $C$ is independent of $\zeta \in \mathcal{I}$.  
\end{enumerate}
\end{lemma}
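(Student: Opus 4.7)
The plan is to obtain (a) and (b) by integration by parts on the defining Cauchy integrals for $\delta_j$, to deduce (c) from these explicit representations together with Lemma 2.13 of \cite{CLmain}, and to prove the endpoint estimates in (d) by the standard splitting of a Cauchy-type integral with $C^{1}$ density near the endpoint.

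For (a), the basic identity $\frac{d}{ds}\ln_{s}(k-s) = \frac{1}{s-k}$ lets me integrate
\[ \delta_{1}(\zeta,k) = \exp\Big\{\frac{1}{2\pi i}\int_{\omega k_{4}}^{i}\frac{\ln(1 + r_{1}(\omega^{2} s)r_{2}(\omega^{2} s))}{s-k}\,ds\Big\} \]
by parts. The boundary contributions evaluate to $-2\pi\nu_{3}\ln_{i}(k-i) + 2\pi\nu_{1}\ln_{\omega k_{4}}(k-\omega k_{4})$: at $s = i$ one uses $r_{1}(i) = 0$ (Assumption (iii)) to reduce $\ln(1+r_{1}(\omega^{2} i) r_{2}(\omega^{2} i)) = \ln f(i) = -2\pi\nu_{3}$, and at $s = \omega k_{4}$ one uses $\omega^{3} = 1$ to reduce $\ln(1+r_{1}(k_{4})r_{2}(k_{4})) = -2\pi\nu_{1}$. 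The remaining integral equals $-2\pi i\, \chi_{1}(\zeta,k)$ by the definition of $\chi_{1}$. Division by $2\pi i$ then gives the formula for $\delta_{1}$ in (a); the computations for $\delta_{2}, \delta_{3}$ are identical. For $\delta_{4}, \delta_{5}$, the boundary term at $\omega$ is formally divergent because $f(\omega) = 0$, and this divergence is exactly what is absorbed by the explicit subtraction $\tilde{\ln}_{\omega}(k-\omega)\ln f(e^{i(2\pi/3-\epsilon)})$ in the regularized integrals defining $\tilde{\chi}_{4}, \tilde{\chi}_{5}$, or by the analogous subtraction with $\ln_{\omega}$ in the case of $\chi_{4}, \chi_{5}$. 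Part (b) then follows from (a): since $\tilde{\ln}_{s} - \ln_{s}$ is a locally constant integer multiple of $2\pi i$, the same additive shift appears in both the prefactor logarithms and (after integration in $s$) in $\tilde{\chi}_{j} - \chi_{j}$, so the two shifts cancel inside the exponential.

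Part (c) is largely formal once (a) is established. Analyticity of $\delta_{j}(\zeta,\cdot)$ off the jump contour is immediate from the Cauchy representation, and the multiplicative jump relations $\delta_{j,+} = \delta_{j,-}\cdot(\text{positive factor})$ are the Plemelj--Sokhotski identities for the real boundary values $\ln(1+r_{1}r_{2})$ and $\ln f$; all of these data are well-defined and smooth on the relevant arcs by Lemma 2.13 of \cite{CLmain}. For the uniform bounds, $\delta_{1}^{\pm 1}, \delta_{2}^{\pm 1}, \delta_{3}^{\pm 1}$ stay bounded on all of $\C$ because their defining logarithms remain bounded uniformly in $\zeta \in \mathcal{I}$, whereas $\ln f$ has a logarithmic singularity at $\omega$, which forces $\delta_{4}^{\pm 1}, \delta_{5}^{\pm 1}$ to be bounded only in the exterior of $D_{\epsilon}(\omega)$, as stated in \eqref{IIbis delta1bound}.

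For (d), I would write
\[ \chi_{1}(\zeta,k)-\chi_{1}(\zeta,\omega k_{4}) = \frac{1}{2\pi i}\int_{\omega k_{4}}^{i}\bigl[\ln_{s}(k-s) - \ln_{s}(\omega k_{4}-s)\bigr]\,d\ln(1+r_{1}(\omega^{2} s)r_{2}(\omega^{2} s)), \]
and split the integration range into a short arc of length $O(|k-\omega k_{4}|)$ adjacent to $\omega k_{4}$ plus its complement. On the short arc, both logarithms are of size $O(\ln|k-\omega k_{4}|)$, while on the complement the difference is $O(|k-\omega k_{4}|)$ by the mean value theorem, the non-tangential approach keeping $k$ away from the branch cut. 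Since $d\ln(1+r_{1}(\omega^{2}\cdot)r_{2}(\omega^{2}\cdot))$ has bounded density along the arc uniformly in $\zeta \in \mathcal{I}$, the two pieces combine to give \eqref{II asymp chi at k1}. The four estimates in \eqref{II asymp chi at omega2k2} follow in exactly the same way. The main obstacle throughout is the careful book-keeping of branch choices and the regularization at $\omega$; once those are set up, all the computations are standard.
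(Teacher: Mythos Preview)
Your proposal is correct and follows exactly the standard approach the paper has in mind: the paper omits the proof entirely, stating only that it ``consists of an analysis of the definitions \eqref{def of deltaj} and uses the assumption that $r_1 = 0$ on $[0,i]$,'' and your integration-by-parts argument together with the endpoint splitting is precisely that analysis. Your identification of the role of $r_1(i)=0$ in reducing $1+r_1(\omega^2 i)r_2(\omega^2 i)$ to $f(i)$ is also the point the paper singles out (see the Remark immediately following the lemma).
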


\begin{remark}
The identity $f(i)=1+r_{1}(\omega^{2}i)r_{2}(\omega^{2}i)$, which is used in the above definition of $\nu_{3}$, is a consequence of Assumption $(iii)$ and the symmetries \eqref{r1r2 relation on the unit circle}--\eqref{r1r2 relation with kbar symmetry}. Indeed, by Assumption $(iii)$ we have $r_{1}(i)=0$. This implies by \eqref{r1r2 relation with kbar symmetry} that $r_{2}(i)=0$. We then find using \eqref{r1r2 relation on the unit circle} that $r_{1}(\omega^{2}i)+r_{2}(\frac{1}{\omega^{2}i})=0$ and $r_{1}(\frac{1}{\omega^{2}i})+r_{2}(\omega^{2}i)=0$. This implies in particular that $r_{1}(\frac{1}{\omega^{2}i})r_{2}(\frac{1}{\omega^{2}i})=r_{1}(\omega^{2}i)r_{2}(\omega^{2}i)$, from which $f(i)=1+r_{1}(\omega^{2}i)r_{2}(\omega^{2}i)$ follows.
\end{remark}

The following inequalities will be important for us.
\begin{lemma}\label{lemma: nuhat lemma IIbis}
For all $\zeta \in (0,\frac{1}{\sqrt{3}})$, the following inequalities hold:
\begin{align*}
\nu_{1} \geq 0, \qquad \hat{\nu}_{2}:=\nu_{2}+\nu_{5}-\nu_{4} \geq 0.
\end{align*}
\end{lemma}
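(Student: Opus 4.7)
My plan is to exploit the Schwarz-type symmetry $r_{2}(k)=\tilde{r}(k)\overline{r_{1}(k)}$ valid for $k\in\partial\mathbb{D}\setminus\{\pm\omega^{2}\}$, which follows from \eqref{r1r2 relation with kbar symmetry} together with $\bar{k}^{-1}=k$ when $|k|=1$. A short trigonometric computation gives the explicit formula
$$\tilde{r}(e^{i\theta})=-\frac{\sin(\pi/3+\theta)}{\sin(\pi/3-\theta)},$$
from which I would read off the relevant sign information for $\tilde{r}$.

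For $\nu_{1}\geq 0$: setting $\rho(k):=r_{1}(k)r_{2}(k)$, the above symmetry gives $\rho(k)=\tilde{r}(k)|r_{1}(k)|^{2}$ on $\partial\mathbb{D}$, so $\rho$ is real with the same sign as $\tilde{r}$. Since $\arg k_{4}\in(-\pi/4,-\pi/6)$ makes $\tilde{r}(k_{4})<0$, we obtain $\rho(k_{4})\leq 0$, hence $1+\rho(k_{4})\leq 1$ and $\nu_{1}=-\tfrac{1}{2\pi}\ln(1+\rho(k_{4}))\geq 0$.

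For $\hat{\nu}_{2}\geq 0$: after unwinding the definitions (the positivity of all logarithmic arguments is guaranteed by Lemma 2.13 of \cite{CLmain}), this reduces to
$$f(\omega^{2}k_{2})-\bigl(1+\rho(\omega^{2}k_{2})\bigr)\,f(\omega k_{2})\geq 0. \qquad (\ast)$$
The key preliminary step is to derive a companion to \eqref{r1r2 relation on the unit circle},
$$r_{2}(\tfrac{1}{\omega k})+r_{1}(\omega k)+r_{2}(\omega^{2}k)r_{1}(\tfrac{1}{k})=0,\qquad k\in\partial\mathbb{D}\setminus\{\omega,-\omega\}, \qquad (B)$$
which I would obtain by conjugating \eqref{r1r2 relation on the unit circle} and applying $r_{2}=\tilde{r}\overline{r_{1}}$, the identity $\tilde{r}(k)\tilde{r}(1/k)=1$, and the identity $\tilde{r}(k)=\tilde{r}(\tfrac{1}{\omega k})\tilde{r}(\tfrac{1}{\omega^{2}k})$ (both immediate algebraic consequences of the definition of $\tilde{r}$). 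Evaluating \eqref{r1r2 relation on the unit circle} and $(B)$ at the appropriate shifts $k=\omega k_{2}$ and $k=\omega^{2}k_{2}$ would then yield the two identities
\begin{align*}
r_{2}(\omega k_{2})r_{2}(\omega^{2}k_{2})-r_{2}(\tfrac{1}{k_{2}}) &= r_{1}(k_{2})\bigl[1-r_{2}(\omega k_{2})r_{2}(\tfrac{1}{\omega k_{2}})\bigr],\\
r_{1}(\tfrac{1}{k_{2}})-r_{1}(\omega^{2}k_{2})r_{1}(\omega k_{2}) &= -r_{2}(k_{2})\bigl[1-r_{1}(\omega k_{2})r_{1}(\tfrac{1}{\omega k_{2}})\bigr].
\end{align*}

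The final and most delicate step is to expand the left-hand side of $(\ast)$ directly from $f(k)=1+\rho(k)+\rho(\tfrac{1}{\omega^{2}k})$ and recognize the result as the product of these two brackets. Setting $X:=r_{1}(\omega k_{2})r_{1}(\tfrac{1}{\omega k_{2}})$ and noting that $r_{2}(\omega k_{2})r_{2}(\tfrac{1}{\omega k_{2}})=\overline{X}$ (another consequence of $r_{2}=\tilde{r}\overline{r_{1}}$ combined with $\tilde{r}(k)\tilde{r}(1/k)=1$), the factorization would collapse to
$$f(\omega^{2}k_{2})-\bigl(1+\rho(\omega^{2}k_{2})\bigr)f(\omega k_{2})=-\tilde{r}(k_{2})|r_{1}(k_{2})|^{2}\,|1-X|^{2},$$
which is $\geq 0$ because $\arg k_{2}\in(-3\pi/4,-2\pi/3)$ forces $\tilde{r}(k_{2})<0$. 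I expect the regrouping of the six terms obtained from expanding $(\ast)$ into this clean product form to be the main computational obstacle; once that factorization is spotted, everything else is a sign check based on the trigonometric formula for $\tilde{r}$ on the unit circle and direct manipulation of the symmetries already established in \cite{CLmain}.
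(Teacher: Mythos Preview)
Your treatment of $\nu_{1}\geq 0$ is exactly the paper's argument: use $r_{2}(k)=\tilde{r}(k)\overline{r_{1}(k)}$ on $\partial\D$, note $\arg k_{4}\in(-\tfrac{\pi}{4},-\tfrac{\pi}{6})$, and read off $\tilde{r}(k_{4})\leq 0$.

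For $\hat{\nu}_{2}\geq 0$ you take a genuinely different route. The paper does not compute anything here; it simply invokes \cite[Lemma 2.13 $(iv)$]{CLmain}, which records the general inequality directly. You instead reduce to $(\ast)$ and prove the factorization
\[
f(\omega^{2}k_{2})-(1+\rho(\omega^{2}k_{2}))f(\omega k_{2})
=\bigl[r_{2}(\omega k_{2})r_{2}(\omega^{2}k_{2})-r_{2}(\tfrac{1}{k_{2}})\bigr]\bigl[r_{1}(\tfrac{1}{k_{2}})-r_{1}(\omega^{2}k_{2})r_{1}(\omega k_{2})\bigr]
=-\rho(k_{2})\,|1-X|^{2}.
\]
This is correct: expanding both sides and cancelling the common terms $-\rho(\omega^{2}k_{2})\rho(\omega k_{2})-\rho(\tfrac{1}{k_{2}})$, the residual identity reduces (after one application each of \eqref{r1r2 relation on the unit circle} and your companion $(B)$ at $k=k_{2}$) to $-r_{1}(\tfrac{1}{\omega k_{2}})r_{1}(\omega k_{2})=-r_{1}(\omega k_{2})r_{1}(\tfrac{1}{\omega k_{2}})$. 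Your derivation of $(B)$ from the conjugate of \eqref{r1r2 relation on the unit circle} via $\tilde{r}(k)\tilde{r}(1/k)=1$ and $\tilde{r}(\omega^{2}k)=\tilde{r}(\tfrac{1}{k})\tilde{r}(\tfrac{1}{\omega k})$ is also sound, and $\tilde{r}(k_{2})<0$ indeed follows from $\arg k_{2}\in(-\tfrac{3\pi}{4},-\tfrac{2\pi}{3})$.

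The trade-off: your argument is self-contained and makes transparent exactly which algebraic identity underlies $\hat{\nu}_{2}\geq 0$, at the cost of a page of unit-circle bookkeeping. The paper's one-line citation keeps this section short but pushes the content into the companion paper.
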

\begin{proof}
For $\zeta \in (0,\frac{1}{\sqrt{3}})$, we have $\arg k_{4} \in (-\frac{\pi}{4},-\frac{\pi}{6})$ and $\arg k_{2} \in (-\frac{3\pi}{4},-\frac{2\pi}{3})$. Hence, the inequality for $\nu_{1}$ follows from \eqref{r1r2 relation with kbar symmetry} and the fact that $\tilde{r}(k) \leq 0$ for $k \in \{e^{i\theta}: \theta \in (-\frac{\pi}{3},\frac{\pi}{3})\}$, and the inequality for $\hat{\nu}_{2}$ follows from \cite[Lemma 2.13 $(iv)$]{CLmain}.
\end{proof}
In what follows we drop the $\zeta$-dependence of $\{\delta_{j}(\zeta,k)\}_{j=1}^{5}$ for conciseness. For $k \in \mathbb{C}\setminus \partial \mathbb{D}$, we define
\begin{align*}
\Delta_{33}(\zeta,k) = \frac{\delta_{1}(\omega^{2}k)\delta_{1}(\frac{1}{\omega k})}{\delta_{1}(k)\delta_{1}(\frac{1}{k})}    \frac{\delta_{2}(k)\delta_{2}(\frac{1}{k})}{\delta_{2}(\omega k)\delta_{2}(\frac{1}{\omega^{2}k})}    \frac{\delta_{3}(\omega^{2}k)\delta_{3}(\frac{1}{\omega k})}{\delta_{3}(k)\delta_{3}(\frac{1}{k})}     \frac{\delta_{4}(\omega^{2} k)\delta_{4}(\frac{1}{\omega k})}{\delta_{4}(\omega k)\delta_{4}(\frac{1}{\omega^{2} k})}     \frac{\delta_{5}(\omega k)\delta_{5}(\frac{1}{\omega^{2} k})}{\delta_{5}(k)\delta_{5}(\frac{1}{k})}. 
\end{align*}
This function satisfies $\Delta_{33}(\zeta,\frac{1}{k}) = \Delta_{33}(\zeta,k)$, and
\begin{subequations}\label{IIbis jumps Delta33}
\begin{align}
& \Delta_{33,+}(\zeta,k) = \Delta_{33,-}(\zeta,k)\frac{1}{1+r_{1}(\omega^{2}k)r_{2}(\omega^{2}k)} , & & k \in \Gamma_{5}^{(2)}, \\
& \Delta_{33,+}(\zeta,k) = \Delta_{33,-}(\zeta,k)\frac{1+r_{1}(k)r_{2}(k)}{f(k)}, & & k \in \Gamma_{8}^{(2)}, \\
& \Delta_{33,+}(\zeta,k) = \Delta_{33,-}(\zeta,k)\frac{1}{f(\omega^{2}k)}, & & k \in \Gamma_{11}^{(2)}.
\end{align}
\end{subequations}
Define also $\Delta_{11}(\zeta,k)=\Delta_{33}(\zeta,\omega k)$ and $\Delta_{22}(\zeta,k)=\Delta_{33}(\zeta,\omega^{2} k)$. We now define the inverse of the global parametrix by
\begin{align}\label{IIbis def of Delta}
\Delta(\zeta,k) = \begin{pmatrix}
\Delta_{11}(\zeta,k) & 0 & 0 \\
0 & \Delta_{22}(\zeta,k) & 0 \\
0 & 0 & \Delta_{33}(\zeta,k)
\end{pmatrix}, \quad \zeta \in \mathcal{I}, \; k \in \mathbb{C}\setminus \partial \mathbb{D}.
\end{align}
The function $\Delta$ satisfies
\begin{align*}
\Delta(\zeta,k) = \mathcal{A}\Delta(\zeta,\omega k)\mathcal{A}^{-1} = \mathcal{B}\Delta(\zeta,\tfrac{1}{k})\mathcal{B},
\end{align*}
and possesses the jumps
\begin{subequations}\label{IIbis jumps Delta11}
\begin{align}
& \Delta_{11,+}(\zeta,k) = \Delta_{11,-}(\zeta,k) (1+r_{1}(\omega^{2}k)r_{2}(\omega^{2}k)) , & & k \in \Gamma_{5}^{(2)}, \\
& \Delta_{11,+}(\zeta,k) = \Delta_{11,-}(\zeta,k)f(k), & & k \in \Gamma_{8}^{(2)}, \\
& \Delta_{11,+}(\zeta,k) = \Delta_{11,-}(\zeta,k)f(k), & & k \in \Gamma_{11}^{(2)},
\end{align}
\end{subequations}
and
\begin{subequations}\label{IIbis jumps Delta22}
\begin{align}
& \Delta_{22,+}(\zeta,k) = \Delta_{22,-}(\zeta,k), & & k \in \Gamma_{5}^{(2)}, \\
& \Delta_{22,+}(\zeta,k) = \Delta_{22,-}(\zeta,k) \frac{1}{1+r_{1}(k)r_{2}(k)}, & & k \in \Gamma_{8}^{(2)}, \\
& \Delta_{22,+}(\zeta,k) = \Delta_{22,-}(\zeta,k)\frac{f(\omega^{2}k)}{f(k)}, & & k \in \Gamma_{11}^{(2)}.
\end{align}
\end{subequations}
The next lemma follows from \eqref{IIbis def of Delta} and Lemma \ref{IIbis deltalemma}.

\begin{lemma}\label{Deltalemma}
For any $\epsilon>0$, $\Delta(\zeta,k)$ and $\Delta(\zeta,k)^{-1}$ are uniformly bounded for $k \in \mathbb{C}\setminus$ $ (\cup_{j=0}^{2}D_{\epsilon}(\omega^{j}) \cup \partial \mathbb{D})$ and $\zeta \in \mathcal{I}$. Furthermore, $\Delta(\zeta,k)=I+O(k^{-1})$ as $k \to \infty$.
\end{lemma}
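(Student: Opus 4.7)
The plan is to deduce the lemma directly from the structural properties of $\{\delta_j\}_{j=1}^5$ already collected in Lemma \ref{IIbis deltalemma}, since $\Delta$ is diagonal and each diagonal entry is an explicit finite product of $\delta_j$-values. First I would reduce to $\Delta_{33}$ alone: because $\Delta_{11}(\zeta,k)=\Delta_{33}(\zeta,\omega k)$ and $\Delta_{22}(\zeta,k)=\Delta_{33}(\zeta,\omega^{2}k)$, and because the exceptional set $\cup_{j=0}^{2}D_{\epsilon}(\omega^{j})\cup \partial\mathbb{D}$ is invariant under $k\mapsto \omega k$ and $k\mapsto \omega^2 k$, a uniform bound on $\Delta_{33}^{\pm 1}$ away from that set immediately yields the same bound for $\Delta_{11}^{\pm 1}$ and $\Delta_{22}^{\pm 1}$.

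For the boundedness step, I would observe that every factor appearing in the defining product of $\Delta_{33}$ is of the form $\delta_{j}(h(k))^{\pm 1}$ with $h(k)\in\{k,\omega k,\omega^{2}k,k^{-1},(\omega k)^{-1},(\omega^{2}k)^{-1}\}$. Each such map $h$ sends $\partial\mathbb{D}$ to $\partial\mathbb{D}$ and $\mathbb{C}\setminus\partial\mathbb{D}$ to itself, so if $k$ lies off $\partial\mathbb{D}$ then so does $h(k)$, and in particular $h(k)$ stays off every jump contour $\Gamma_{5}^{(2)},\Gamma_{8}^{(2)},\Gamma_{11}^{(2)}$. Lemma \ref{IIbis deltalemma}(c) then controls $\delta_{1},\delta_{2},\delta_{3}$ everywhere off $\partial\mathbb{D}$, while for $\delta_{4}$ and $\delta_{5}$ it controls them away from the point $\omega$. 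A quick check of the six possible arguments $h(k)$ shows that the preimage of $D_{\epsilon}(\omega)$ under the union of these six M\"obius maps is contained in $D_{\epsilon'}(1)\cup D_{\epsilon'}(\omega)\cup D_{\epsilon'}(\omega^{2})$ with $\epsilon'\to 0$ as $\epsilon\to 0$; hence excluding $\cup_{j=0}^{2}D_{\epsilon}(\omega^{j})$ guarantees that every $h(k)$ stays a definite distance from $\omega$. Combining these observations bounds $|\Delta_{33}(\zeta,k)^{\pm 1}|$ uniformly for $\zeta\in\mathcal{I}$ and $k$ in the required region.

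For the asymptotic statement, I would use the normalization $\delta_{j}(\zeta,k)=1+O(k^{-1})$ as $k\to\infty$ from \eqref{IIbis delta asymp at inf}. The factors in $\Delta_{33}$ split into two groups: those of the form $\delta_{j}(\omega^{s}k)$ (which tend to $1$) and those of the form $\delta_{j}(1/(\omega^{s}k))$ (which tend to the finite nonzero constant $\delta_{j}(0)$). Inspection of the defining product shows that for every $j\in\{1,\dots,5\}$ exactly as many $\omega^{s}k$-type factors appear in the numerator as in the denominator, and likewise exactly as many $1/(\omega^{s}k)$-type factors appear in numerator and denominator. Hence $\Delta_{33}(\zeta,\infty)=1$, and Taylor expanding each factor through order $k^{-1}$ in its argument yields $\Delta_{33}(\zeta,k)=1+O(k^{-1})$, which finishes the proof by diagonality of $\Delta$.

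The whole argument is essentially bookkeeping; the only genuine thing to verify is the balance of factors in Step 3, which I expect to be the main (though routine) obstacle, together with the accounting of which shifts of $\omega$ must be excluded in Step 2. Everything else is a direct appeal to Lemma \ref{IIbis deltalemma}(c) and the normalization \eqref{IIbis delta asymp at inf}.
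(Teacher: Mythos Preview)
Your proposal is correct and follows the same approach as the paper, which simply states that the lemma follows from the definition \eqref{IIbis def of Delta} and Lemma \ref{IIbis deltalemma}; you have essentially written out the bookkeeping the paper leaves implicit. One minor remark: for the asymptotic part you do not actually need the balance of the $\omega^{s}k$-type factors (each such factor is already $1+O(k^{-1})$ individually), only the balance of the $1/(\omega^{s}k)$-type factors so that the nonzero constants $\delta_{j}(0)$ cancel---but your count is correct either way.
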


\section{The $n^{(2)}\to n^{(3)}$ transformation}
Let $n^{(3)}$ be the piecewise analytic function given by
\begin{align}\label{IIbis def of mp3p}
n^{(3)}(x,t,k) = n^{(2)}(x,t,k)\Delta(\zeta,k), \qquad  k \in \mathbb{C}\setminus \Gamma^{(3)},
\end{align}
where $\Gamma^{(3)}=\Gamma^{(2)}$. It satisfies $n_{+}^{(3)}=n_{-}^{(3)}v^{(3)}$ on $\Gamma^{(3)}$, where $v^{(3)}=\Delta_{-}^{-1}v^{(2)}\Delta_{+}$.

Let $\mathcal{X}_{1}^{\epsilon}$ and $\mathcal{X}_{2}^{\epsilon}$ be small crosses centered at $\omega k_4$ and $\omega^2 k_2$, respectively. More precisely, $\mathcal{X}_{1}^{\epsilon}:= \cup_{j=1}^{4}\mathcal{X}_{1,j}^{\epsilon}$, where
\begin{align*}
&\mathcal{X}_{1,1}^{\epsilon} = \Gamma_{1}^{(3)}\cap D_{\epsilon}(\omega k_{4}), && \mathcal{X}_{1,2}^{\epsilon} = \Gamma_{4}^{(3)}\cap D_{\epsilon}(\omega k_{4}), 
	\\
& \mathcal{X}_{1,3}^{\epsilon} = \Gamma_{6}^{(3)}\cap D_{\epsilon}(\omega k_{4}), && \mathcal{X}_{1,4}^{\epsilon} = \Gamma_{3}^{(3)}\cap D_{\epsilon}(\omega k_{4}),
\end{align*}
are oriented outwards from $\omega k_4$, and $\mathcal{X}_{2}^{\epsilon}:= \cup_{j=1}^{4}\mathcal{X}_{2,j}^{\epsilon}$, where
\begin{align*}
& \mathcal{X}_{2,1}^{\epsilon} = \Gamma_{7}^{(3)}\cap D_{\epsilon}(\omega^{2} k_{2}), && \mathcal{X}_{2,2}^{\epsilon} = \Gamma_{10}^{(3)}\cap D_{\epsilon}(\omega^{2} k_{2}), 
	\\
& \mathcal{X}_{2,3}^{\epsilon} = \Gamma_{12}^{(3)}\cap D_{\epsilon}(\omega^{2} k_{2}), && \mathcal{X}_{2,4}^{\epsilon} = \Gamma_{9}^{(3)}\cap D_{\epsilon}(\omega^{2} k_{2}),
\end{align*}
are oriented outwards from $\omega^{2}k_{2}$. Let $\hat{\mathcal{X}}^\epsilon = \bigcup_{j=1}^{2}\big(\mathcal{X}_{j}^\epsilon \cup \omega \mathcal{X}_{j}^\epsilon \cup \omega^2 \mathcal{X}_{j}^\epsilon \cup (\mathcal{X}_{j}^\epsilon)^{-1} \cup (\omega \mathcal{X}_{j}^\epsilon)^{-1} \cup (\omega^2 \mathcal{X}_{j}^\epsilon)^{-1}\big)$ be the union of all the small crosses centered at the twelve saddle points.

\begin{lemma}\label{II v3lemma}
$v^{(3)}$ converges to the identity matrix $I$ as $t \to \infty$ uniformly for $\zeta \in \mathcal{I}$ and $k \in \Gamma^{(3)}\setminus \hat{\mathcal{X}}^\epsilon$. More precisely, for $\zeta \in \mathcal{I}$,
\begin{align}\label{II v3estimatesa}
& \|v^{(3)} - I\|_{(L^1 \cap L^\infty)(\Gamma^{(3)}\setminus \hat{\mathcal{X}}^\epsilon)} \leq Ct^{-1}.
\end{align}
\end{lemma}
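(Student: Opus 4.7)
The plan is to decompose $\Gamma^{(3)} \setminus \hat{\mathcal{X}}^\epsilon$ into three essentially disjoint pieces and estimate $v^{(3)} - I = \Delta_{-}^{-1}(v^{(2)} - \Delta_{-}\Delta_{+}^{-1})\Delta_{+}$ on each. The three pieces are: (A) the portions of the lens contours off $\partial \mathbb{D}$, i.e.\ the parts of $\Gamma_{1}^{(3)}, \Gamma_{3}^{(3)}, \Gamma_{4}^{(3)}, \Gamma_{6}^{(3)}, \Gamma_{7}^{(3)}, \Gamma_{9}^{(3)}, \Gamma_{10}^{(3)}, \Gamma_{12}^{(3)}$ lying outside $\hat{\mathcal{X}}^\epsilon$ (together with their $\mathcal{A}$- and $\mathcal{B}$-images); (B) the arcs on $\partial \mathbb{D}$, namely $\Gamma_2^{(3)}, \Gamma_5^{(3)}, \Gamma_8^{(3)}, \Gamma_{11}^{(3)}$ and their symmetric copies; (C) the auxiliary contours $\Gamma_{1_r''}^{(3)}, \Gamma_{4_r'}^{(3)}$ and the ``symmetry'' contours $\Gamma_{j_s}^{(3)}$ for $j=1,\dots,8$.

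For piece (B) the argument is the raison d'\^etre of $\Delta$. On each unit-circle arc the matrix $v^{(2)}$ coincides with $v_j^{(1)}$ and decomposes as $v_j^{(1)} = D_j + v_{j,r}^{(1)}$, where $D_j$ is the explicit diagonal factor written in \eqref{vp1p 123}, \eqref{vp1p 456}, \eqref{vp1p 789}, \eqref{vp1p 101112}. The jumps \eqref{IIbis jumps Delta11}--\eqref{IIbis jumps Delta22} of $\Delta$ are designed precisely so that $\Delta_-^{-1} D_j \Delta_+ = I$ on each such arc (a quick diagonal-by-diagonal check using the formulas for $\Delta_{11}, \Delta_{22}, \Delta_{33}$). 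Therefore
\begin{equation*}
 v^{(3)} - I = \Delta_-^{-1}\, v_{j,r}^{(1)}\, \Delta_+ \qquad \text{on arc } \Gamma_j^{(3)} \subset \partial \mathbb{D},
\end{equation*}
and Lemma \ref{Deltalemma} bounds $\Delta_\pm$ uniformly (for $\zeta \in \mathcal{I}$, away from $\epsilon$-neighborhoods of the cube roots of unity, which we absorb into $\hat{\mathcal{X}}^\epsilon$ by shrinking $\epsilon$ if necessary), while the decomposition lemma gives $\|v_{j,r}^{(1)}\|_{(L^1 \cap L^\infty)} = O(t^{-1})$.

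For piece (A) the function $\Delta$ is analytic on the arc, so $\Delta_+ = \Delta_-$ and $v^{(3)} = \Delta^{-1} v^{(2)} \Delta$. On each such lens contour $v^{(2)}$ is an identity-plus-one-entry triangular matrix whose sole nonzero off-diagonal entry is of the form $r_{i,\ell,a}(\cdot)\, e^{\pm t\,\Phi_{ij}}$ (possibly combined as in $v_{j_s}^{(2)}$; here one would use (A) for the small-cross complements and (C) for $\Gamma_{j_s}$). The lens contours were chosen in Section 4 so that, on each piece, $\mathrm{Re}\,\Phi_{ij}$ has the correct sign; I would read off from the signature tables in Figure \ref{IIbis fig: Re Phi 21 31 and 32 for zeta=0.7} that outside $\hat{\mathcal{X}}^\epsilon$ one has $\pm \mathrm{Re}\,\Phi_{ij}(\zeta, k) \leq -c(\epsilon)\min\{1, \mathrm{dist}(k, \text{saddles})^2\}$ uniformly for $\zeta$ in the compact set $\mathcal{I}$. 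Combined with the boundedness of $r_{i,\ell,a}$ (Lemma \ref{IIbis decompositionlemma}) and of $\Delta^{\pm 1}$ (Lemma \ref{Deltalemma}), this yields an exponential $L^\infty$ decay of order $e^{-c(\epsilon) t}$ outside a fixed neighborhood of the saddles, and on the remaining bounded portion the standard Gaussian-tail integral $\int_\epsilon^{\infty} e^{-c t s^2}\, ds = O(t^{-1/2} e^{-c t \epsilon^2})$ controls the $L^1$ norm; so piece (A) contributes $O(e^{-ct})$, which is absorbed into $Ct^{-1}$.

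Piece (C) is settled by the existing Lemmas \ref{vsmallnearilemmaV} and \ref{symmetryjumpslemma}: these give $v^{(2)} - I = O(t^{-N})$ in $L^\infty$ on $\Gamma_{1_r''}^{(2)} \cup \Gamma_{4_r'}^{(2)} \cup \bigcup_j \Gamma_{j_s}^{(2)}$ for any $N$, and since these contours are of uniformly bounded length with $\Delta^{\pm 1}$ bounded nearby, the same $O(t^{-N})$ estimate passes to $v^{(3)} - I$ in $(L^1 \cap L^\infty)$. Adding the three contributions gives the stated bound $Ct^{-1}$. The main obstacle I expect is bookkeeping: one must verify carefully, contour by contour and using the signs dictated by Figure \ref{IIbis fig: Re Phi 21 31 and 32 for zeta=0.7}, that on every lens piece the relevant $\mathrm{Re}\,\Phi_{ij}$ has the sign needed to make the corresponding triangular factor exponentially small (accounting for the factors with $r_{i,a}(\omega^{\pm 1} k)$, $r_{i,a}(\tfrac{1}{\omega^{\pm 1} k})$, etc.), and that all constants $c(\epsilon)$ can be taken uniform in $\zeta \in \mathcal{I}$ since the saddle points $k_j(\zeta)$ vary continuously and stay bounded away from each other and from $\pm 1, \pm\omega, \pm\omega^2$ over the compact set $\mathcal{I} \subset (0, 1/\sqrt{3})$.
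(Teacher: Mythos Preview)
Your proposal is correct and follows essentially the same three-part decomposition as the paper's proof: unit-circle arcs where $\Delta$ cancels the diagonal part (your piece~(B), the paper's equation~\eqref{lol3}), auxiliary contours handled by Lemmas~\ref{vsmallnearilemmaV} and~\ref{symmetryjumpslemma} (your piece~(C)), and lens contours with exponential decay (your piece~(A), which the paper leaves implicit). One small correction: your parenthetical remark that $\epsilon$-neighborhoods of the cube roots of unity can be ``absorbed into $\hat{\mathcal{X}}^\epsilon$'' is not right, since $\hat{\mathcal{X}}^\epsilon$ consists only of the twelve saddle-point crosses and $\omega^j$ are not saddle points; the paper is equally terse here, but the actual mechanism is that the possible blow-up of $\Delta^{\pm1}$ near $\omega^j$ is matched by the singular structure of the $b_{ij,a}$ (through the factor $1/f$) so that the conjugated jump $v^{(3)}=\Delta_-^{-1}v^{(2)}\Delta_+$ remains bounded there.
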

\begin{proof}
From the expressions for $v_j^{(1)}$, $j = 2,5,8, 11$, in \eqref{vp1p 123}, \eqref{vp1p 456}, \eqref{vp1p 789}, and \eqref{vp1p 101112} together with \eqref{II jumps vj diagonal p2p}, \eqref{IIbis jumps Delta33}, \eqref{IIbis jumps Delta11}, \eqref{IIbis jumps Delta22}, we deduce that
\begin{align}\label{lol3}
& v_j^{(3)} = \Delta_{-}^{-1}v_j^{(1)}\Delta_{+} = I + \Delta_{-}^{-1}v_{j,r}^{(1)}\Delta_{+}, \quad j = 2,5,8,11.
\end{align}
By Lemma \ref{IIbis decompositionlemma} and Lemma \ref{Deltalemma}, the matrices $\Delta_{-}^{-1}v_{j,r}^{(1)}\Delta_{+}$, $j = 2,5,8,11$, are small as $ t \to \infty$.
Using also Lemmas \ref{vsmallnearilemmaV} and \ref{symmetryjumpslemma}, we infer that the $L^1$ and $L^\infty$ norms of $v^{(3)}-I$ tend to $0$ as $t \to \infty$, uniformly for $k \in \mathsf{S}\cap \Gamma^{(3)}\setminus \hat{\mathcal{X}}^\epsilon$. Then \eqref{II v3estimatesa} follows from the symmetries \eqref{vjsymm}.
\end{proof}

To define the next transformation $n^{(3)}\to \hat{n}$, we need to construct local parametrices near $\{k_{j},\omega k_{j},\omega^{2}k_{j}\}_{j=1}^{4}$. Thanks to the $\mathcal{A}$- and $\mathcal{B}$-symmetries, we can focus on the construction of the parametrices near $\omega k_{4}$ and $\omega^{2}k_{2}$. The construction of these two parametrices is the content of Sections \ref{section:loc param 1} and \ref{section:loc param 2}. We will then show in Section \ref{section:small norm} that these parametrices are good approximations of $n^{(3)}$ in $D_{\epsilon}(\omega k_{4})$ and $D_{\epsilon}(\omega^{2}k_{2})$.

\section{Local parametrix near $\omega k_{4}$}\label{section:loc param 1}

As $k \to \omega k_{4}$, we have
\begin{align*}
& -(\Phi_{31}(\zeta,k)-\Phi_{31}(\zeta,\omega k_{4})) =  \phi_{\omega k_{4}}(k-\omega k_{4})^{2} + O((k-\omega k_{4})^{3}), \;\; \phi_{\omega k_{4}} :=\omega \frac{4-3 k_{4} \zeta - k_{4}^{3} \zeta}{4 k_{4}^{4}}.
\end{align*}
Let $z_{1}=z_{1}(\zeta,t,k)$ be given by
\begin{align*}
z_{1}=z_{1,\star}\sqrt{t} (k-\omega k_{4}) \hat{z}_{1}, \qquad \hat{z}_{1} = \sqrt{\frac{2i(\Phi_{31}(\zeta,\omega k_{4})-\Phi_{31}(\zeta,k))}{z_{1,\star}^{2}(k-\omega k_{4})^{2}}},
\end{align*}
where the principal branch is chosen for $\hat{z}_{1}=\hat{z}_{1}(\zeta,k)$, and
\begin{align*}
z_{1,\star} = \sqrt{2} e^{\frac{\pi i}{4}} \sqrt{\phi_{\omega k_{4}}}, \qquad -i\omega k_{4}z_{1,\star}>0.
\end{align*}
We have $\hat{z}_{1}(\zeta,\omega k_{4})=1$ and $t(\Phi_{31}(\zeta, k) -\Phi_{31}(\zeta,\omega k_{4})) = \frac{iz_{1}^{2}}{2}$. Let $\epsilon>0$ be fixed and small enough that the map $z_{1}$ is conformal from $D_\epsilon(\omega k_4)$ to a neighborhood of $0$. The asymptotic behavior of $z_{1}$ as $k \to \omega k_{4}$ is given by
\begin{align*}
z_{1} = z_{1,\star}\sqrt{t}(k-\omega k_{4})(1+O(k-\omega k_{4})).
\end{align*}
For all $k \in D_\epsilon(\omega k_4)$, the following identity holds:
\begin{align*}
\ln_{\omega k_{4}}(k-\omega k_{4}) & = \ln[z_{1,\star}(k-\omega k_{4})\hat{z}_{1}]- \ln \hat{z}_{1} -\ln z_{1,\star} + 2\pi i,
\end{align*}
where $\ln$ is the principal logarithm. Shrinking $\epsilon > 0$ if necessary, the function $k \mapsto \ln \hat{z}_{1}$ is analytic in $D_{\epsilon}(\omega k_{4})$, $\ln \hat{z}_{1} = O(k-\omega k_{4})$ as $k \to \omega k_{4}$, and
\begin{align*}
\ln z_{1,\star} = \ln |z_{1,\star}| + i \arg z_{1,\star} = \ln |z_{1,\star}| + i \big( \tfrac{\pi}{2}-\arg (\omega k_{4}) \big),
\end{align*}
where $\arg(\omega k_{4})\in (\tfrac{\pi}{3},\tfrac{\pi}{2})$.
Using Lemma \ref{IIbis deltalemma}, we obtain
\begin{align*}
& \delta_{1}(\zeta,k) = e^{-i\nu_{1} \ln z_{1}} t^{\frac{i \nu_{1}}{2}} e^{i \nu_{1} \ln \hat{z}_{1}} e^{i \nu_{1} \ln z_{1,\star}} e^{i\nu_{3}\ln_{i}(k-i)} e^{-\chi_{1}(\zeta,k)}e^{2\pi \nu_{1}}. 
\end{align*}
For conciseness, we from now on, for $a \in \C$, use the notation $z_{1}^{a} := e^{a \ln z_{1}}$, $z_{1,\star}^{a} := e^{a \ln z_{1,\star}}$, and $\hat{z}_{1}^{a} := e^{a \ln \hat{z}_{1}}$. Using \eqref{IIbis def of Delta} and the above expression for $\delta_1(\zeta,k)$, we get
\begin{align}
& \frac{\Delta_{33}(\zeta,k)}{\Delta_{11}(\zeta,k)} = \frac{\mathcal{D}_{1}(k)}{\delta_1(\zeta, k)^2}
= \tilde{d}_{1,0}(\zeta,t)d_{1,1}(\zeta,k)z_{1}^{2i \nu_{1}}, \label{IIbis Delta33 Delta11} \\
& \tilde{d}_{1,0}(\zeta,t) = e^{-4\pi\nu_{1}}e^{2\chi_{1}(\zeta,\omega k_{4})} e^{-2i \nu_{3} \ln_{i}(\omega k_{4}-i)} t^{-i \nu_{1}} z_{1,\star}^{-2i\nu_{1}} \mathcal{D}_{1}(\omega k_{4}), \nonumber \\
& d_{1,1}(\zeta,k) = e^{2\chi_{1}(\zeta,k)-2\chi_{1}(\zeta,\omega k_{4})} e^{-2i\nu_{3}\big[\ln_{i}(k - i)-\ln_{i}(\omega k_{4}-i)\big]} \hat{z}_{1}^{-2i\nu_{1}} \frac{\mathcal{D}_{1}(k)}{\mathcal{D}_{1}(\omega k_{4})}, \nonumber \\
& \mathcal{D}_{1}(k) = \frac{\delta_{1}(\omega^{2} k)\delta_{1}(\frac{1}{\omega k})^{2}\delta_{1}(\omega k)}{\delta_{1}(\frac{1}{k})\delta_{1}(\frac{1}{\omega^{2}k})} \frac{\delta_{2}(k)\delta_{2}(\omega^{2}k)\delta_{2}(\frac{1}{k})^{2}}{\delta_{2}(\omega k)^{2} \delta_{2}(\frac{1}{\omega^{2}k})\delta_{2}(\frac{1}{\omega k})} \frac{\delta_{3}(\omega k) \delta_{3}(\omega^{2} k) \delta_{3}(\frac{1}{\omega k})^{2}}{\delta_{3}(k)^{2}\delta_{3}(\frac{1}{k})\delta_{3}(\frac{1}{\omega^{2} k})} \nonumber \\
& \hspace{1.5cm} \times \frac{\delta_{4}(\omega^{2}k)^{2} \delta_{4}(\frac{1}{k})\delta_{4}(\frac{1}{\omega k})}{\delta_{4}(k)\delta_{4}(\omega k)\delta_{4}(\frac{1}{\omega^{2} k})^{2}} \frac{\delta_{5}(\omega k)^{2} \delta_{5}(\frac{1}{\omega k})\delta_{5}(\frac{1}{\omega^{2}k})}{\delta_{5}(k) \delta_{5}(\frac{1}{k})^{2}\delta_{5}(\omega^{2}k)}.
\end{align}
Define $Y_1(\zeta,t)$ by
\begin{align*}
Y_{1}(\zeta,t) = \tilde{d}_{1,0}(\zeta,t)^{\frac{\sigma}{2}}e^{-\frac{t}{2}\Phi_{31}(\zeta,\omega k_{4})\sigma}\lambda_{1}^{\sigma},
\end{align*}
where $\sigma = \diag (1,0,-1)$ and $\lambda_{1}$ is a free parameter that will be fixed later. Let
\begin{align*}
\tilde{n}(x,t,k) = n^{(3)}(x,t,k)Y_{1}(\zeta,t), \qquad k \in D_\epsilon(\omega k_4),
\end{align*}
let $\tilde{v}$ be the jump matrix of $\tilde{n}$, and let $\tilde{v}_{j}$ be the restriction of $\tilde{v}$ to $\Gamma_{j}^{(3)}\cap D_\epsilon(\omega k_4)$. By \eqref{Vv2def}, \eqref{IIbis def of mp3p}, and \eqref{IIbis Delta33 Delta11}, we can write
\begin{align*}
& \tilde{v}_{1} = \begin{pmatrix}
1 & 0 & 0 \\
0 & 1 & 0 \\
-d_{1,1}^{-1}\lambda_{1}^{2}r_{1,a}(\omega^{2} k)z_{1}^{-2i \nu_{1}} e^{\frac{iz_{1}^{2}}{2}} & 0 & 1
\end{pmatrix}, \; \tilde{v}_{3} = \begin{pmatrix}
1 & 0 & -d_{1,1}\lambda_{1}^{-2} r_{2,a}(\omega^{2}k) z_{1}^{2i \nu_{1}} e^{-\frac{iz_{1}^{2}}{2}} \\
0 & 1 & 0 \\
0 & 0 & 1
\end{pmatrix}, \nonumber \\
& \tilde{v}_{4} = \begin{pmatrix}
1 & 0 & d_{1,1}\lambda_{1}^{-2} \hat{r}_{2,a}(\omega^{2}k) z_{1}^{2i \nu_{1}} e^{-\frac{iz_{1}^{2}}{2}} \\
0 & 1 & 0 \\
0 & 0 & 1
\end{pmatrix}, \; \tilde{v}_{6} = \begin{pmatrix}
1 & 0 & 0 \\
0 & 1 & 0 \\
d_{1,1}^{-1}\lambda_{1}^{2} \hat{r}_{1,a}(\omega^{2}k) z_{1}^{-2i \nu_{1}} e^{\frac{iz_{1}^{2}}{2}} & 0 & 1 
\end{pmatrix}. 
\end{align*}
The matrices $\tilde{v}_{j}$, $j=1,3,4,6$, suggest to approximate $\tilde{n}$ by $(1,1,1)Y_{1}\tilde{m}^{\omega k_{4}}$, where $\tilde{m}^{\omega k_{4}}(x,t,k)$ is the solution of the $3\times 3$ RH problem with $\tilde{m}^{\omega k_{4}}(x,t,k) \to I$ as $t \to \infty$ uniformly for $k\in \partial D_{\epsilon}(\omega k_{4})$ and whose jumps on $\mathcal{X}_{1}^{\epsilon}$ are
\begin{align}\nonumber
& \tilde{v}_{\mathcal{X}_{1,1}^{\epsilon}}^{\omega k_{4}}=\begin{pmatrix}
1 & 0 & 0 \\
0 & 1 & 0 \\
-\lambda_{1}^{2}r_{1}(\omega^{2} k_{\star})z_{1}^{-2i \nu_{1}} e^{\frac{iz_{1}^{2}}{2}} & 0 & 1
\end{pmatrix}, & & \tilde{v}_{\mathcal{X}_{1,2}^{\epsilon}}^{\omega k_{4}} =\begin{pmatrix}
1 & 0 & \lambda_{1}^{-2} \hat{r}_{2}(\omega^{2}k_{\star}) z_{1}^{2i \nu_{1}} e^{-\frac{iz_{1}^{2}}{2}} \\
0 & 1 & 0 \\
0 & 0 & 1
\end{pmatrix}, 
	\\ \label{vtildeomegak4def}
& \tilde{v}_{\mathcal{X}_{1,3}^{\epsilon}}^{\omega k_{4}} = \begin{pmatrix}
1 & 0 & 0 \\
0 & 1 & 0 \\
\lambda_{1}^{2} \hat{r}_{1}(\omega^{2}k_{\star}) z_{1}^{-2i \nu_{1}} e^{\frac{iz_{1}^{2}}{2}} & 0 & 1 
\end{pmatrix}, & & \tilde{v}_{\mathcal{X}_{1,4}^{\epsilon}}^{\omega k_{4}} = \begin{pmatrix}
1 & 0 & -\lambda_{1}^{-2} r_{2}(\omega^{2}k_{\star}) z_{1}^{2i \nu_{1}} e^{-\frac{iz_{1}^{2}}{2}} \\
0 & 1 & 0 \\
0 & 0 & 1
\end{pmatrix},
\end{align}
where $k_{\star}=\omega k_{4}$. 
We fix the free parameter $\lambda_{1}$ as follows:
\begin{align}\label{def of lambda1}
\lambda_{1} = |\tilde{r}(k_{4})|^{\frac{1}{4}} = |\tilde{r}(\tfrac{1}{k_{4}})|^{-\frac{1}{4}}.
\end{align}

\begin{lemma}\label{IIbis lemma: bound on Y}
The function $Y_{1}(\zeta,t)$ is uniformly bounded:
\begin{align}\label{IIbis Ybound}
\sup_{\zeta \in \mathcal{I}} \sup_{t \geq 2} | Y_{1}(\zeta,t)^{\pm 1}| \leq C.
\end{align}
Moreover, the functions $\tilde{d}_{1,0}(\zeta, t)$ and $d_{1,1}(\zeta, k)$ satisfy
\begin{align}\label{IIbis d0estimate}
& |\tilde{d}_{1,0}(\zeta, t)| = 1, & & \zeta \in \mathcal{I}, \ t \geq 2,\\
& |d_{1,1}(\zeta, k) - 1| \leq C |k - \omega k_4| (1+ |\ln|k-\omega k_4||), & & \zeta \in \mathcal{I}, \ k \in \mathcal{X}_{1}^{\epsilon}. \label{IIbis d1estimate}
\end{align}
\end{lemma}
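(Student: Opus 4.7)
The lemma has three assertions, which I would establish in the order (ii), (iii), (i) since (ii) feeds the proof of (i).

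I would begin with the modulus identity $|\tilde d_{1,0}(\zeta,t)| = 1$. Inspecting \eqref{def of dt10} factor by factor: since $\nu_1, \nu_3 \in \R$ (both being $-\frac{1}{2\pi}$ times logs of positive quantities, by \cite[Lemma 2.13]{CLmain} and the Remark following Lemma \ref{IIbis deltalemma}), one has $|t^{-i\nu_1}| = 1$, $|z_{1,\star}^{-2i\nu_1}| = e^{2\nu_1(\pi/2 - \arg(\omega k_4))}$, $|e^{-2i\nu_3\ln_i(\omega k_4 - i)}| = e^{2\nu_3 \arg_i(\omega k_4 - i)}$, and $|e^{-4\pi \nu_1}| = e^{-4\pi\nu_1}$. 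The remaining task is to compute $|e^{2\chi_1(\zeta,\omega k_4)}\mathcal D_1(\omega k_4)|$. Substituting the explicit representations of $\delta_1,\dots,\delta_5$ from Lemma \ref{IIbis deltalemma}(a) into the twenty-factor product defining $\mathcal D_1$, each $\delta_j$-factor becomes an exponential of a $\nu_\ell \ln_{(\cdot)}(\cdots)$ term plus a $\chi_\ell$ term. The $\chi$-contributions pair with the factor $e^{2\chi_1(\omega k_4)}$ using the Schwarz-type identity $\overline{\chi_j(\zeta,k)} = -\chi_j(\zeta, \bar k^{-1})$ together with the reflection symmetries of the $\delta_j$-structure to produce pure phases, while the $\nu_\ell \ln$-contributions telescope and exactly cancel the explicit exponentials computed above.

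Next, the estimate \eqref{IIbis d1estimate} on $d_{1,1}$ is a direct factor-by-factor calculation. Write $d_{1,1} = A_1 A_2 A_3 A_4$ corresponding to the four factors in its definition. For $A_1 = e^{2\chi_1(\zeta,k)-2\chi_1(\zeta,\omega k_4)}$, the estimate \eqref{II asymp chi at k1} immediately gives $A_1 = 1 + O(|k-\omega k_4|(1+|\ln|k-\omega k_4||))$. The factor $A_2 = e^{-2i\nu_3[\ln_i(k-i)-\ln_i(\omega k_4-i)]}$ is analytic at $\omega k_4$ (which lies off the branch cut and away from $i$ for $\zeta \in \mathcal I$), so $A_2 = 1 + O(k-\omega k_4)$. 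The factor $A_3 = \hat z_1^{-2i\nu_1}$ satisfies $\hat z_1(\omega k_4) = 1$ and is analytic near $\omega k_4$, giving $A_3 = 1 + O(k-\omega k_4)$. Finally, $A_4 = \mathcal D_1(k)/\mathcal D_1(\omega k_4)$: each of the twenty $\delta_j$-factors in $\mathcal D_1(k)$ is evaluated at a point that, when $k = \omega k_4$, belongs to the set $\{k_4,\omega^2k_4,\bar k_4/\omega,\ldots\}$, all of which are disjoint from the endpoints $\omega k_4, \omega^2 k_2, i, \omega$ driving the singularities of the $\delta_j$, so $\mathcal D_1$ is analytic at $\omega k_4$ and $A_4 = 1 + O(k-\omega k_4)$. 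Multiplying the four estimates yields \eqref{IIbis d1estimate}.

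Finally, for the bound \eqref{IIbis Ybound} on $Y_1 = \tilde d_{1,0}^{\sigma/2} e^{-\frac{t}{2}\Phi_{31}(\zeta,\omega k_4)\sigma}\lambda_1^\sigma$, since $\sigma = \diag(1,0,-1)$ only the outer entries need estimation. The first factor has modulus one by the identity just proved. For the exponential factor, the explicit formulas \eqref{lmexpressions intro} imply that $l_j(k), z_j(k) \in i\R$ whenever $|k|=1$, so $\Phi_{31}(\zeta,\omega k_4) \in i\R$ and $|e^{-\frac{t}{2}\Phi_{31}(\zeta,\omega k_4)\sigma}| = 1$. For $\lambda_1 = |\tilde r(k_4)|^{1/4}$, as $\zeta$ ranges over the compact set $\mathcal I \Subset (0,1/\sqrt 3)$ the point $k_4(\zeta)$ traces a compact arc of $\partial\D$ with $\arg k_4 \in (-\pi/4,-\pi/6)$, disjoint from the zeros $\{\pm\omega\}$ and poles $\{\pm\omega^2\}$ of $\tilde r$; hence $\lambda_1$ is bounded above and away from zero uniformly in $\zeta \in \mathcal I$.

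The main obstacle is the first paragraph, namely the modulus identity $|\tilde d_{1,0}| = 1$. Parts (i) and (iii) are routine estimates once (ii) is in place, but (ii) requires patient bookkeeping across the twenty-factor product defining $\mathcal D_1$, and the verification that the $\nu_\ell \ln$-contributions telescope rests on the combinatorial structure of the exponents in $\mathcal D_1$ that was specifically designed for this cancellation; in particular, the identity $f(i) = 1 + r_1(\omega^2 i) r_2(\omega^2 i)$ noted in the Remark after Lemma \ref{IIbis deltalemma} is used to relate the $\nu_3$-contributions from $\delta_1, \delta_3$ consistently.
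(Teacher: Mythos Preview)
Your treatment of \eqref{IIbis d1estimate} and \eqref{IIbis Ybound} is correct and matches what the paper dismisses as ``standard estimates''; in particular, your observation that $\Phi_{31}(\zeta,\omega k_4)\in i\R$ because $|\omega k_4|=1$ is exactly what is needed for the exponential factor in $Y_1$.

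For the modulus identity $|\tilde d_{1,0}|=1$, your approach differs from the paper's. You propose to substitute the representations of Lemma~\ref{IIbis deltalemma}(a) into all twenty $\delta_j$-factors of $\mathcal D_1$ and invoke a Schwarz-type identity $\overline{\chi_j(\zeta,k)}=-\chi_j(\zeta,\bar k^{-1})$. That identity is not quite right as stated: reflection of the Cauchy kernel across $\partial\D$ produces an additive constant, namely $\overline{\phi(k)}+\phi(1/\bar k)=\frac{1}{2\pi i}\int g(s)\,\frac{ds}{s}$ for $\phi(k)=\frac{1}{2\pi i}\int\frac{g(s)\,ds}{s-k}$ with real $g$. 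With this correction your route can be made to work, but it is laborious. The paper proceeds more economically. It first observes that for $k\in\partial\D$ off the jump arc, $|\delta_j(\zeta,k)|$ is \emph{constant}---a direct consequence of the elementary fact that $\re\bigl(\frac{1}{2\pi i}\frac{ds}{s-k}\bigr)=\frac{1}{4\pi}\,d\theta_s$ when both $k$ and $s$ lie on $\partial\D$. Since in $\mathcal D_1$ the net power of each $\delta_j$ is $0$ for $j=2,3,4,5$ and $+2$ for $j=1$, this collapses $|\mathcal D_1(\omega k_4)|$ to $|\delta_1(\zeta,\omega k_4)|^2$; a single explicit evaluation of $|\delta_1(\zeta,\omega k_4)|$ via \eqref{absdelta1} then combines with the remaining factors in \eqref{d10absolutevalue} to give $|\tilde d_{1,0}|=1$. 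Your direct-substitution approach would eventually recover the same cancellations, but the paper's organizing principle---constancy of $|\delta_j|$ on $\partial\D$---bypasses almost all of the bookkeeping you flag as the main obstacle.
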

\begin{proof}
Standard estimates yield (\ref{IIbis Ybound}) and (\ref{IIbis d1estimate}). Let us show (\ref{IIbis d0estimate}). Observe that
\begin{align}\label{d10absolutevalue}
 |\tilde{d}_{1,0}(\zeta,t)| = &\; e^{-4\pi\nu_{1}}  e^{2\re \chi_{1}(\zeta,\omega k_{4})}   e^{2\nu_3\arg_i(\omega k_{4}- i)} 
e^{2 \nu_{1} \arg z_{1,\star}} |\mathcal{D}_{1}(\omega k_{4})|.
\end{align} 
We infer from Lemma \ref{IIbis deltalemma} that, for $k \in \partial \D \setminus \{\arg k \in (\arg(\omega k_4), \pi/2)\}$,
\begin{align}\label{absdelta1}
 |\delta_{1}(\zeta,k)| = &\; e^{\nu_{1} \frac{\arg_i(k) + \arg(\omega k_4) + \pi}{2}
 - \nu_{3} \frac{\arg_i(k) + \arg(i) + \pi}{2}
  - \re \chi_{1}(\zeta,k) }
\end{align}
where $\arg_i(k) \in (\pi/2, 5\pi/2)$. It follows from the definition of $\chi_1$ in (\ref{def of chij}) that the right-hand side of (\ref{absdelta1}) is independent of $k \in \partial \D \setminus \{\arg k \in (\arg(\omega k_4), \pi/2)\}$.
Similar calculations show that $|\delta_{j}(\zeta,k)|$, $j = 2, 3, 4,5$, are independent of $k$ for $k \in \partial \D \setminus \{\arg k \in (\pi/2, 2\pi/3)\}$.
Thus
\begin{align*}
|\mathcal{D}_{1}(\omega k_4)| = &\; |\delta_1(\zeta, \omega k_4)|^2
= e^{2\nu_{1} \frac{2\arg(\omega k_4) + 3\pi}{2}
 - 2\nu_{3} \frac{\arg(\omega k_4) + \arg(i) + 3\pi}{2}
  - 2\re \chi_{1}(\zeta,\omega k_4) }.
\end{align*}
Substituting this expression for $|\mathcal{D}_{1}(\omega k_4)|$ as well as the expressions $\arg_i(\omega k_{4}- i) = \frac{\arg(\omega k_{4}) + \arg(i) + 3\pi}{2} $ and
$\arg z_{1,\star} = \tfrac{\pi}{2}-\arg (\omega k_{4})$ into (\ref{d10absolutevalue}), we obtain $|\tilde{d}_{1,0}(\zeta,t)| = 1$.
\end{proof}

The local parametrix $m^{\omega k_{4}}$ is defined by
\begin{align}\label{IIbis m omegak4 def}
m^{\omega k_{4}}(x,t,k) = Y_{1}(\zeta,t)m^{X,(1)}(q,z_{1}(\zeta,t,k))Y_{1}(\zeta,t)^{-1}, \qquad k \in D_{\epsilon}(\omega k_{4}),
\end{align}
where $m^{X,(1)}$ is the solution to the model RH problem of Lemma \ref{IIbis Xlemma 3} with 
\begin{align*}
& q = |\tilde{r}(k_{4})|^{\frac{1}{2}}r_{1}(k_{4}), \qquad \bar{q} = -|\tilde{r}(k_{4})|^{-\frac{1}{2}}r_{2}(k_{4}).
\end{align*}
Note that, for $\zeta \in \mathcal{I}$, we have $|q| < 1$ and
$$q = \lambda_1^2 r_1(\omega^2 k_\star), \quad
\frac{-\bar{q}}{1 - |q|^2} = \lambda_1^{-2} \hat{r}_2(\omega^2 k_\star), \quad
\bar{q} = -\lambda_1^{-2} r_2(\omega^2 k_\star), \quad
\frac{q}{1 - |q|^2} = \lambda_1^2\hat{r}_1(\omega^2 k_\star),$$
so that the jump matrices in Lemma \ref{IIbis Xlemma 3} agree with those in (\ref{vtildeomegak4def}) if $z = z_1$ and $\nu = \nu_1$.

\begin{lemma}\label{IIbis k0lemma}
For each $t \geq 2$ and $\zeta \in \mathcal{I}$, $m^{\omega k_4}(x,t,k)$ defined in \eqref{IIbis m omegak4 def} is analytic for $k \in D_\epsilon(\omega k_4) \setminus \mathcal{X}_{1}^\epsilon$. Moreover, $m^{\omega k_4}(x,t,k)$ is uniformly bounded for $t \geq 2$, $\zeta \in \mathcal{I}$, and $k \in D_\epsilon(\omega k_4) \setminus \mathcal{X}_{1}^\epsilon$. On $\mathcal{X}_{1}^\epsilon$, $m^{\omega k_4}$ satisfies $m_+^{\omega k_4} =  m_-^{\omega k_4} v^{\omega k_4}$, where the jump matrix $v^{\omega k_4}$ obeys
\begin{align}\label{IIbis v3vk0estimate}
\begin{cases}
 \| v^{(3)} - v^{\omega k_4} \|_{L^1(\mathcal{X}_{1}^\epsilon)} \leq C t^{-1} \ln t,
	\\
\| v^{(3)} - v^{\omega k_4} \|_{L^\infty(\mathcal{X}_{1}^\epsilon)} \leq C t^{-1/2} \ln t,
\end{cases} \qquad \zeta \in \mathcal{I}, \ t \geq 2.
\end{align}
Furthermore, as $t \to \infty$,
\begin{align}\label{IIbis mmodmuestimate2}
& \| m^{\omega k_4}(x,t,\cdot) - I \|_{L^\infty(\partial D_\epsilon(\omega k_4))} = O(t^{-1/2}),
	\\ \label{IIbis mmodmuestimate1}
& m^{\omega k_4} - I = \frac{Y_{1}(\zeta, t)m_1^{X,(1)} Y_{1}(\zeta, t)^{-1}}{z_{1,\star}\sqrt{t} (k-\omega k_4) \hat{z}_{1}(\zeta,k)} + O(t^{-1})
\end{align}
uniformly for $\zeta \in \mathcal{I}$, and $m_1^{X,(1)}=m_1^{X,(1)}(q)$ is given by \eqref{IIbis mXasymptotics 3}.
\end{lemma}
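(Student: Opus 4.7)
The plan is to deduce the four conclusions of the lemma directly from the definition $m^{\omega k_4}(x,t,k) = Y_1(\zeta,t)\, m^{X,(1)}(q,z_1(\zeta,t,k))\, Y_1(\zeta,t)^{-1}$, using the analytic and asymptotic properties of $m^{X,(1)}$ from Lemma \ref{IIbis Xlemma 3}, the conformality of the change of variable $k \mapsto z_1$ on $D_\epsilon(\omega k_4)$, the decomposition lemma \ref{IIbis decompositionlemma}, and the bounds on $Y_1$, $\tilde d_{1,0}$, $d_{1,1}$ from Lemma \ref{IIbis lemma: bound on Y}.

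The first two claims are almost immediate. Since $Y_1(\zeta,t)$ is $k$-independent and $k \mapsto z_1$ is conformal from $D_\epsilon(\omega k_4)$ onto a neighborhood of $0$ mapping the four rays of $\mathcal{X}_1^\epsilon$ to the four rays of the model contour of Lemma \ref{IIbis Xlemma 3}, $m^{\omega k_4}$ inherits the analyticity of $m^{X,(1)}$ off the cross. Combining the uniform bound (\ref{IIbis Ybound}) on $Y_1^{\pm 1}$ with the uniform boundedness of $m^{X,(1)}$ from Lemma \ref{IIbis Xlemma 3} yields the claimed uniform bound on $m^{\omega k_4}$.

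For the jump estimate (\ref{IIbis v3vk0estimate}), I would first translate between the $\tilde n$-picture and the $m^{\omega k_4}$-picture: conjugating the model jumps $\tilde v^{\omega k_4}_{\mathcal X^\epsilon_{1,j}}$ in (\ref{vtildeomegak4def}) back by $Y_1$ produces, on each $\mathcal X^\epsilon_{1,j}$, a jump that differs from $v^{(3)}$ only through the replacement of $r_{1,a}(\omega^{2}k),\,r_{2,a}(\omega^{2}k),\,\hat r_{1,a}(\omega^{2}k),\,\hat r_{2,a}(\omega^{2}k)$ by their values at $k_4 = \omega^{2}(\omega k_4)$, and through the replacement of $d_{1,1}(\zeta,k)$ by $1$. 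The first kind of difference is $O(|k-\omega k_4|\, e^{(t/4)|\re \Phi_{21}(\zeta,\omega^{2}k)|})$ by Lemma \ref{IIbis decompositionlemma}(b) applied with $N=0$, $\ell\in\{1,2\}$, and $k_\star=k_4\in \mathcal{R}_1\cap\mathcal{R}_2$; the second is $O(|k-\omega k_4|(1+|\ln|k-\omega k_4||))$ by (\ref{IIbis d1estimate}). Using the identity $\pm\tfrac{i}{2}z_1^{2}=\pm t\bigl(\Phi_{31}(\zeta,k)-\Phi_{31}(\zeta,\omega k_4)\bigr)$ together with $\Phi_{31}(\zeta,k)=-\Phi_{21}(\zeta,\omega^{2}k)$ and the signature information from Figure \ref{IIbis fig: Re Phi 21 31 and 32 for zeta=0.7}, the exponential factor $|e^{\pm i z_1^{2}/2}|$ decays like $e^{-c t|k-\omega k_4|^{2}}$ on the appropriate outgoing ray, which dominates the Taylor remainder. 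Maximizing $r(1+|\ln r|)e^{-ctr^{2}}$ for $r\in[0,\epsilon]$ gives the $L^\infty$-bound $O(t^{-1/2}\ln t)$, and integrating along the cross produces the $L^{1}$-bound $O(t^{-1}\ln t)$.

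For the matching on $\partial D_\epsilon(\omega k_4)$, one has $|z_1|\geq c\sqrt t$ uniformly, so the large-$z$ expansion (\ref{IIbis mXasymptotics 3}) of Lemma \ref{IIbis Xlemma 3} gives $m^{X,(1)}(q,z_1)=I+m_1^{X,(1)}/z_1+O(z_1^{-2})=I+O(t^{-1/2})$, and the uniform bound on $Y_1^{\pm1}$ yields (\ref{IIbis mmodmuestimate2}); substituting $z_1 = z_{1,\star}\sqrt t(k-\omega k_4)\hat z_1$ into $Y_1(m^{X,(1)}-I)Y_1^{-1}$ then gives (\ref{IIbis mmodmuestimate1}) with $O(t^{-1})$ error. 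The main obstacle is the jump comparison on $\mathcal X_1^\epsilon$: one must keep the Taylor estimate on $r_{j,a}$ and the logarithmic estimate on $d_{1,1}-1$ separate rather than combine them, because the $\ln t$ factor in (\ref{IIbis v3vk0estimate}) arises precisely from evaluating $|\ln|k-\omega k_4||$ at the critical scale $|k-\omega k_4|\sim t^{-1/2}$ where the Gaussian weight is saturated.
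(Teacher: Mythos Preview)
Your proposal is correct and follows exactly the standard argument that the paper defers to \cite[Lemma 9.10]{CLmain}: you derive analyticity and boundedness from the conformality of $z_1$ and the bounds (\ref{IIbis Ybound}), you obtain (\ref{IIbis v3vk0estimate}) by comparing $\tilde v_j$ with $\tilde v^{\omega k_4}_{\mathcal X^\epsilon_{1,j}}$ via Lemma \ref{IIbis decompositionlemma}(b) and (\ref{IIbis d1estimate}) and then optimizing the Gaussian-weighted logarithmic bound, and you read off (\ref{IIbis mmodmuestimate2})--(\ref{IIbis mmodmuestimate1}) from the large-$z$ expansion (\ref{IIbis mXasymptotics 3}) using $|z_1|\asymp\sqrt t$ on $\partial D_\epsilon(\omega k_4)$. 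The paper omits the proof entirely, so there is nothing to compare beyond confirming that your sketch matches the intended strategy.
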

\begin{proof}
The proof is almost identical to \cite[Lemma 9.10]{CLmain}, so we omit it.
\end{proof}

\section{Local parametrix near $\omega^{2}k_{2}$}\label{section:loc param 2}
The construction of this local parametrix is similar to the one in \cite[Section 9.6]{CLmain} (the difference is that the functions $\{\delta_{j}\}_{j=1}^{5}$, $d_{2,0}, d_{2,1}, \mathcal{D}_{2}$ appearing in \cite[Section 9.6]{CLmain} need to be defined differently here). For convenience, we provide details of the construction. As $k \to \omega^{2} k_{2}$, we have
\begin{align*}
& -\hspace{-0.1cm}(\Phi_{32}(\zeta,k)-\Phi_{32}(\zeta,\omega^{2} k_{2})) \hspace{-0.05cm}= \hspace{-0.05cm} \phi_{\omega^{2} k_{2}}(k\hspace{-0.05cm}-\hspace{-0.05cm}\omega^{2} k_{2})^{2} \hspace{-0.05cm}+\hspace{-0.05cm} O((k\hspace{-0.05cm}-\hspace{-0.05cm}\omega^{2} k_{2})^{3}), 
 	\\
&\phi_{\omega^{2} k_{2}} := -\omega^{2} \frac{4-3 k_{2} \zeta - k_{2}^{3} \zeta}{4 k_{2}^{4}}.
\end{align*}
Let $z_{2}=z_{2}(\zeta,t,k)$ be given by
\begin{align*}
z_{2}=z_{2,\star}\sqrt{t} (k-\omega^{2} k_{2}) \hat{z}_{2}, \qquad \hat{z}_{2} = \sqrt{\frac{2i(\Phi_{32}(\zeta,\omega^{2} k_{2})-\Phi_{32}(\zeta,k))}{z_{2,\star}^{2}(k-\omega^{2} k_{2})^{2}}},
\end{align*}
where the principal branch is chosen for $\hat{z}_{2}=\hat{z}_{2}(\zeta,k)$ and 
\begin{align*}
z_{2,\star} = \sqrt{2} e^{\frac{\pi i}{4}} \sqrt{\phi_{\omega^{2} k_{2}}}, \qquad -i\omega^{2} k_{2}z_{2,\star}>0.
\end{align*}
We have $\hat{z}_{2}(\zeta,\omega^{2} k_{2})=1$ and $t(\Phi_{32}(\zeta, k) - \Phi_{32}(\zeta,\omega^{2} k_{2})) = \frac{iz_{2}^{2}}{2}$. Let $\epsilon > 0$ be fixed and small enough that the map $z_{2}$ is conformal from $D_\epsilon(\omega^{2} k_2)$ to a neighborhood of $0$. As $k \to \omega^{2} k_{2}$,
\begin{align*}
z_{2} = z_{2,\star}\sqrt{t}(k-\omega^{2} k_{2})(1+O(k-\omega^{2} k_{2})).
\end{align*}
For all $k \in D_\epsilon(\omega^{2} k_2)$, the following identities hold:
\begin{align*}
\ln_{\omega^{2} k_2}(k-\omega^{2} k_2) & = \ln_{0}[z_{2,\star}(k-\omega^{2} k_2)\hat{z}_{2}]- \ln \hat{z}_{2} -\ln z_{2,\star}, \\
\tilde{\ln}_{\omega^{2} k_2}(k-\omega^{2} k_2) & = \ln[z_{2,\star}(k-\omega^{2} k_2)\hat{z}_{2}]- \ln \hat{z}_{2} -\ln z_{2,\star},
\end{align*}
where $\ln_{0}(k):= \ln|k|+i\arg_{0}k$, $\arg_{0}(k)\in (0,2\pi)$, and $\ln$ is the principal logarithm. Shrinking $\epsilon > 0$ if necessary, the function $k \mapsto \ln \hat{z}_{2}$ is analytic in $D_{\epsilon}(\omega^{2} k_2)$, $\ln \hat{z}_{2} = O(k-\omega^{2} k_2)$ as $k \to \omega^{2} k_2$, and
\begin{align*}
\ln z_{2,\star} = \ln |z_{2,\star}| + i \arg z_{2,\star} = \ln |z_{2,\star}| + i \big( \tfrac{\pi}{2}-\arg (\omega^{2} k_2) \big), 
\end{align*}
where $\arg (\omega^{2} k_2) \in (\tfrac{\pi}{2},\tfrac{2\pi}{3})$. Using Lemma \ref{IIbis deltalemma}, we obtain
\begin{align*}
& \delta_{2}(\zeta,k) = e^{-\chi_{2}(\zeta,k)}t^{-\frac{i\nu_{2}}{2}}z_{2,(0)}^{i\nu_{2}}\hat{z}_{2}^{-i\nu_{2}}z_{2,\star}^{-i\nu_{2}}, \\
& \delta_{3}(\zeta,k) = e^{-i\nu_{3}\ln_{i}(k-i)}e^{-\chi_{3}(\zeta,k)}t^{-\frac{i\nu_{4}}{2}}z_{2,(0)}^{i\nu_{4}}\hat{z}_{2}^{-i\nu_{4}}z_{2,\star}^{-i\nu_{4}}, \\
& \delta_{4}(\zeta,k) = t^{\frac{i\nu_{4}}{2}} z_{2}^{-i\nu_{4}} \hat{z}_{2}^{i\nu_{4}} z_{2,\star}^{i\nu_{4}}e^{-\tilde{\chi}_{4}(\zeta,k)}, \\
& \delta_{5}(\zeta,k) = t^{\frac{i\nu_{5}}{2}}z_{2}^{-i\nu_{5}}\hat{z}_{2}^{i\nu_{5}}z_{2,\star}^{i\nu_{5}}e^{-\tilde{\chi}_{5}(\zeta,k)},
\end{align*}
where we have used the notation $z_{2,(0)}^{a} := e^{a \ln_{0} z_{2}}$, $\hat{z}_{2}^{a} := e^{a \ln \hat{z}_{2}}$, $z_{2,\star}^{a} := e^{a \ln z_{2,\star}}$, $z_{2}^{a} := e^{a \ln z_{2}}$ for $a \in \C$. Using \eqref{IIbis def of Delta} and the above expressions for $\{\delta_{j}(\zeta,k)\}_{j=2}^{5}$, we get
\begin{align}\label{IIbis Delta33 Delta22}
& \frac{\Delta_{33}}{\Delta_{22}}
= \frac{\delta_2(k)^2 \delta_4(k) }{\delta_3(k) \delta_5(k)^2} \mathcal{D}_2(k) 
= \tilde{d}_{2,0}(\zeta,t)d_{2,1}(\zeta,k)z_{2}^{i (2\nu_{5}-\nu_{4})} z_{2,(0)}^{i (2\nu_{2}-\nu_{4})}, \\
& \tilde{d}_{2,0}(\zeta,t) = e^{-2\chi_{2}(\zeta,\omega^{2} k_{2}) + \chi_{3}(\zeta,\omega^{2} k_{2}) - \tilde{\chi}_{4}(\zeta,\omega^{2} k_{2}) + 2\tilde{\chi}_{5}(\zeta,\omega^{2} k_{2}) } \nonumber \\
& \hspace{1.6cm} \times e^{i\nu_{3}\ln_{i}(\omega^{2} k_{2}-i)} t^{i (\nu_{4}-\nu_{5}-\nu_{2})} z_{2,\star}^{2i(\nu_{4}-\nu_{5}-\nu_{2})} \mathcal{D}_{2}(\omega^{2} k_{2}), \nonumber \\
& d_{2,1}(\zeta,k) = e^{-2\chi_{2}(\zeta,k)+2\chi_{2}(\zeta,\omega^{2} k_{2}) + \chi_{3}(\zeta,k)-\chi_{3}(\zeta,\omega^{2} k_{2}) - \tilde{\chi}_{4}(\zeta,k)+\tilde{\chi}_{4}(\zeta,\omega^{2} k_{2}) + 2\tilde{\chi}_{5}(\zeta,k)-2\tilde{\chi}_{5}(\zeta,\omega^{2} k_{2}) } \nonumber \\
& \hspace{1.6cm} \times e^{i\nu_{3}[\ln_{i}(k-i)-\ln_{i}(\omega^{2} k_{2}-i)]} \hat{z}_{2}^{2i(\nu_{4}-\nu_{5}-\nu_{2})} \frac{\mathcal{D}_{2}(k)}{\mathcal{D}_{2}(\omega^{2} k_{2})}, \nonumber \\
& \mathcal{D}_{2}(k) = \frac{\delta_{1}(\omega^{2} k)^{2}  \delta_{1}(\frac{1}{\omega k})\delta_{1}(\frac{1}{\omega^{2} k})}{\delta_{1}(k)\delta_{1}(\frac{1}{k})^{2}\delta_{1}(\omega k)} \frac{\delta_{2}(\frac{1}{k})\delta_{2}(\frac{1}{\omega k})}{\delta_{2}(\omega k) \delta_{2}(\frac{1}{\omega^{2}k})^{2}\delta_{2}(\omega^{2} k)} \frac{\delta_{3}(\omega^{2} k)^{2}  \delta_{3}(\frac{1}{\omega k})\delta_{3}(\frac{1}{\omega^{2} k})}{\delta_{3}(\frac{1}{k})^{2}\delta_{3}(\omega k)} \nonumber \\
& \hspace{1.5cm} \times \frac{\delta_{4}(\omega^{2}k) \delta_{4}(\frac{1}{\omega k})^{2}}{\delta_{4}(\omega k)^{2}\delta_{4}(\frac{1}{\omega^{2} k})\delta_{4}(\frac{1}{k})} \frac{\delta_{5}(\omega k) \delta_{5}(\frac{1}{\omega^{2}k})^{2}\delta_{5}(\omega^{2} k)}{\delta_{5}(\frac{1}{k}) \delta_{5}(\frac{1}{\omega k})}.
\end{align}
Define
\begin{align*}
Y_{2}(\zeta,t) = \tilde{d}_{2,0}(\zeta,t)^{\frac{\tilde{\sigma}}{2}}e^{-\frac{t}{2}\Phi_{32}(\zeta,\omega^{2}k_{2})\tilde{\sigma}}\lambda_{2}^{\tilde{\sigma}},
\end{align*}
where $\tilde{\sigma} = \diag (0,1,-1)$ and $\lambda_{2}$ is a free parameter that will be fixed later. Let
\begin{align*}
\tilde{n}(x,t,k) = n^{(3)}(x,t,k)Y_{2}(\zeta,t), \qquad k \in D_\epsilon(\omega^{2} k_2),
\end{align*}
let $\tilde{v}$ be the jump matrix of $\tilde{n}$, and let $\tilde{v}_{j}$ be the restriction of $\tilde{v}$ to $\Gamma_{j}^{(3)}\cap D_{\epsilon}(\omega^{2}k_{2})$. By \eqref{Vv2def}, \eqref{IIbis def of mp3p}, and \eqref{IIbis Delta33 Delta22}, we can write
\begin{align*}
& \tilde{v}_{7}^{-1} = \begin{pmatrix}
1 & 0 & 0 \\
0 & 1 & 0 \\
0 & -d_{2,1}^{-1}\lambda_{2}^{2} \big( \frac{r_{1}(\frac{1}{\omega k})-r_{1}(k)r_{1}(\omega^{2}k)}{1+r_{1}(k)r_{2}(k)} \big)_{a} z_{2}^{-i (2\nu_{5}-\nu_{4})} z_{2,(0)}^{-i (2\nu_{2}-\nu_{4})} e^{\frac{iz_{2}^{2}}{2}} & 1
\end{pmatrix}, \\
& \tilde{v}_{10} = \begin{pmatrix}
1 & 0 & 0 \\
0 & 1 & d_{2,1}\lambda_{2}^{-2} \big( \frac{r_{2}(\frac{1}{\omega k})-r_{2}(k)r_{2}(\omega^{2}k)}{f(\omega^{2}k)} \big)_{a} z_{2}^{i (2\nu_{5}-\nu_{4})} z_{2,(0)}^{i (2\nu_{2}-\nu_{4})} e^{-\frac{iz_{2}^{2}}{2}} \\
0 & 0 & 1
\end{pmatrix}, \\
& \tilde{v}_{12} = \begin{pmatrix}
1 & 0 & 0 \\
0 & 1 & 0 \\
0 & d_{2,1}^{-1}\lambda_{2}^{2} \big( \frac{r_{1}(\frac{1}{\omega k})-r_{1}(k)r_{1}(\omega^{2}k)}{f(\omega^{2}k)} \big)_{a} z_{2}^{-i (2\nu_{5}-\nu_{4})} z_{2,(0)}^{-i (2\nu_{2}-\nu_{4})} e^{\frac{iz_{2}^{2}}{2}} & 1 
\end{pmatrix}, \\
& \tilde{v}_{9}^{-1} = \begin{pmatrix}
1 & 0 & 0 \\
0 & 1 & -d_{2,1}\lambda_{2}^{-2} \big( \frac{r_{2}(\frac{1}{\omega k})-r_{2}(k)r_{2}(\omega^{2}k)}{1+r_{1}(k)r_{2}(k)} \big)_{a} z_{2}^{i (2\nu_{5}-\nu_{4})} z_{2,(0)}^{i (2\nu_{2}-\nu_{4})} e^{-\frac{iz_{2}^{2}}{2}} \\
0 & 0 & 1
\end{pmatrix}.
\end{align*}
The above expressions for the matrices $\tilde{v}_{j}$, $j=7,9,10,12$, suggest that we approximate $\tilde{n}$ by $(1,1,1)Y_{2}\tilde{m}^{\omega^{2}k_{2}}$, where $\tilde{m}^{\omega^{2}k_{2}}(x,t,k)$ is the solution to the $3\times 3$ RH problem with $\tilde{m}^{\omega^{2}k_{2}}(x,t,k)\to I$ as $t \to \infty$ uniformly for $k \in \partial \mathbb{D}(\omega^{2}k_{2})$ and whose jumps on $\mathcal{X}_{2}^{\epsilon}$ are
\begin{align}\nonumber
& \tilde{v}_{\mathcal{X}_{2,1}^{\epsilon}}^{\omega^{2}k_{2}} = \begin{pmatrix}
1 & 0 & 0 \\
0 & 1 & 0 \\
0 & -\lambda_{2}^{2} \frac{r_{1}(\frac{1}{\omega k_{\star}})-r_{1}(k_{\star})r_{1}(\omega^{2}k_{\star})}{1+r_{1}(k_{\star})r_{2}(k_{\star})} z_{2}^{-i (2\nu_{5}-\nu_{4})} z_{2,(0)}^{-i (2\nu_{2}-\nu_{4})} e^{\frac{iz_{2}^{2}}{2}} & 1
\end{pmatrix}, 
	\\ \nonumber
& \tilde{v}_{\mathcal{X}_{2,2}^{\epsilon}}^{\omega^{2}k_{2}} = \begin{pmatrix}
1 & 0 & 0 \\
0 & 1 & \lambda_{2}^{-2} \frac{r_{2}(\frac{1}{\omega k_{\star}})-r_{2}(k_{\star})r_{2}(\omega^{2}k_{\star})}{f(\omega^{2}k_{\star})}  z_{2}^{i (2\nu_{5}-\nu_{4})} z_{2,(0)}^{i (2\nu_{2}-\nu_{4})} e^{-\frac{iz_{2}^{2}}{2}} \\
0 & 0 & 1
\end{pmatrix}, 
	\\ \nonumber
& \tilde{v}_{\mathcal{X}_{2,3}^{\epsilon}}^{\omega^{2}k_{2}} = \begin{pmatrix}
1 & 0 & 0 \\
0 & 1 & 0 \\
0 & \lambda_{2}^{2} \frac{r_{1}(\frac{1}{\omega k_{\star}})-r_{1}(k_{\star})r_{1}(\omega^{2}k_{\star})}{f(\omega^{2}k_{\star})} z_{2}^{-i (2\nu_{5}-\nu_{4})} z_{2,(0)}^{-i (2\nu_{2}-\nu_{4})} e^{\frac{iz_{2}^{2}}{2}} & 1 
\end{pmatrix}, 
	\\ \label{vtildeomega2k2def}
& \tilde{v}_{\mathcal{X}_{2,4}^{\epsilon}}^{\omega^{2}k_{2}} = \begin{pmatrix}
1 & 0 & 0 \\
0 & 1 & -\lambda_{2}^{-2} \frac{r_{2}(\frac{1}{\omega k_{\star}})-r_{2}(k_{\star})r_{2}(\omega^{2}k_{\star})}{1+r_{1}(k_{\star})r_{2}(k_{\star})}  z_{2}^{i (2\nu_{5}-\nu_{4})} z_{2,(0)}^{i (2\nu_{2}-\nu_{4})} e^{-\frac{iz_{2}^{2}}{2}} \\
0 & 0 & 1
\end{pmatrix},
\end{align}
where $k_{\star}=\omega^{2} k_{2}$. 
We fix the free parameter $\lambda_{2}$ as follows:
\begin{align}\label{def of lambda2}
\lambda_{2} = |\tilde{r}(\tfrac{1}{\omega k_{\star}})|^{\frac{1}{4}} = |\tilde{r}(\tfrac{1}{k_{2}})|^{\frac{1}{4}}.
\end{align}
The local parametrix $m^{\omega^{2} k_{2}}$ is defined by
\begin{align}\label{IIbis m omega2k2 def}
m^{\omega^{2} k_{2}}(x,t,k) = Y_{2}(\zeta,t)m^{X,(2)}(q_{2},q_{4},q_{5},q_{6},z_{2}(\zeta,t,k))Y_{2}(\zeta,t)^{-1}, \qquad k \in D_{\epsilon}(\omega^{2} k_{2}),
\end{align}
where $m^{X,(2)}$ is the solution to the model RH problem of Lemma \ref{II Xlemma 3 green} with 
\begin{align*}
& q_{2} = \tilde{r}(k_{\star})^{\frac{1}{2}}r_{1}(k_{\star}), & & \bar{q}_{2} = \tilde{r}(k_{\star})^{-\frac{1}{2}}r_{2}(k_{\star}) = \tilde{r}(k_{\star})^{\frac{1}{2}}\overline{r_{1}(k_{\star})}, \\
& q_{4} = |\tilde{r}(\tfrac{1}{\omega^{2}k_{\star}})|^{\frac{1}{2}}r_{1}(\tfrac{1}{\omega^{2}k_{\star}}), & & \bar{q}_{4} = -|\tilde{r}(\tfrac{1}{\omega^{2}k_{\star}})|^{-\frac{1}{2}}r_{2}(\tfrac{1}{\omega^{2}k_{\star}}) = |\tilde{r}(\tfrac{1}{\omega^{2}k_{\star}})|^{\frac{1}{2}}\overline{r_{1}(\tfrac{1}{\omega^{2}k_{\star}})}, \\
& q_{5} = |\tilde{r}(\omega^{2}k_{\star})|^{\frac{1}{2}}r_{1}(\omega^{2}k_{\star}), & & \bar{q}_{5} = -|\tilde{r}(\omega^{2}k_{\star})|^{-\frac{1}{2}}r_{2}(\omega^{2}k_{\star}) = |\tilde{r}(\omega^{2}k_{\star})|^{\frac{1}{2}}\overline{r_{1}(\omega^{2}k_{\star})}, \\
& q_{6} = |\tilde{r}(\tfrac{1}{\omega k_{\star}})|^{\frac{1}{2}}r_{1}(\tfrac{1}{\omega k_{\star}}), & & \bar{q}_{6} = -|\tilde{r}(\tfrac{1}{\omega k_{\star}})|^{-\frac{1}{2}}r_{2}(\tfrac{1}{\omega k_{\star}}) = |\tilde{r}(\tfrac{1}{\omega k_{\star}})|^{\frac{1}{2}}\overline{r_{1}(\tfrac{1}{\omega k_{\star}})}.
\end{align*}
Using \eqref{r1r2 relation on the unit circle} and the relation
\begin{align*}
\big| \tilde{r}(\tfrac{1}{\omega^{2}k}) \big|^{-\frac{1}{2}} = \big| \tilde{r}(\omega^{2}k) \big|^{\frac{1}{2}} = |\tilde{r}(k)|^{-\frac{1}{2}}\big| \tilde{r}(\tfrac{1}{\omega k}) \big|^{\frac{1}{2}}, \qquad k \in \mathbb{C}\setminus\{-1,1\},
\end{align*}
we see that $q_{4}-\bar{q}_{5}-q_{2}\bar{q}_{6}=0$, so that the condition \eqref{II condition on q2q4q5q6} holds as it must. 
Moreover, for $\zeta \in \mathcal{I}$, we have $1 + |q_{2}|^2 - |q_{4}|^{2} = f(k_\star) >0$, $1 - |q_{5}|^2 - |q_{6}|^{2} = f(\omega^{2}k_{\star}) >0$, and
\begin{align*}
\tfrac{q_{6}-q_{2}q_{5}}{1+|q_{2}|^{2}}
= \lambda_{2}^{2} \tfrac{r_{1}(\frac{1}{\omega k_{\star}})-r_{1}(k_{\star})r_{1}(\omega^{2}k_{\star})}{1+r_{1}(k_{\star})r_{2}(k_{\star})},
\qquad 
-\tfrac{\bar{q}_6-\bar{q}_2\bar{q}_5}{1-|q_{5}|^{2}-|q_{6}|^{2}} = \lambda_{2}^{-2} \tfrac{r_{2}(\frac{1}{\omega k_{\star}})-r_{2}(k_{\star})r_{2}(\omega^{2}k_{\star})}{f(\omega^{2}k_{\star})},
	\\
\tfrac{q_{6}-q_{2}q_{5}}{1-|q_{5}|^{2}-|q_{6}|^{2}} = \lambda_{2}^{2} \tfrac{r_{1}(\frac{1}{\omega k_{\star}})-r_{1}(k_{\star})r_{1}(\omega^{2}k_{\star})}{f(\omega^{2}k_{\star})},
\qquad
\tfrac{\bar{q}_6-\bar{q}_2\bar{q}_5}{1+|q_{2}|^{2}} = -\lambda_{2}^{-2} \tfrac{r_{2}(\frac{1}{\omega k_{\star}})-r_{2}(k_{\star})r_{2}(\omega^{2}k_{\star})}{1+r_{1}(k_{\star})r_{2}(k_{\star})},
\end{align*}
so that the jump matrices in Lemma \ref{II Xlemma 3 green} agree with those in (\ref{vtildeomega2k2def}) if $z = z_2$.

The next lemma is proved in the same way as \cite[Lemma 9.9]{CLmain}.
\begin{lemma}\label{IIbis lemma: bound on Y green}
$Y_{2}(\zeta,t)$ satisfies $\sup_{\zeta \in \mathcal{I}} \sup_{t \geq 2} | Y_{2}(\zeta,t)^{\pm 1}| \leq C$. Furthermore,
\begin{align}
& |\tilde{d}_{2,0}(\zeta, t)| = e^{\pi (2\nu_{2}-\nu_{4})}, & & \zeta \in \mathcal{I}, \ t \geq 2, \label{IIbis d0estimate green} \\
& |d_{2,1}(\zeta, k) - 1| \leq C |k - \omega^{2} k_2| (1+ |\ln|k-\omega^{2} k_2||), & & \zeta \in \mathcal{I}, \ k \in \mathcal{X}_{2}^{\epsilon}. \label{IIbis d1estimate green}
\end{align}
\end{lemma}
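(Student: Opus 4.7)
The plan is to follow the scheme of the parallel result Lemma~\ref{IIbis lemma: bound on Y} (for $Y_1$), which the paper itself attributes to \cite[Lemma 9.9]{CLmain}. The statement has three parts, and I would attack them in the order: (i) the modulus identity $|\tilde{d}_{2,0}(\zeta,t)|=e^{\pi(2\nu_2-\nu_4)}$, (ii) the uniform bound on $Y_2^{\pm 1}$, (iii) the local estimate on $d_{2,1}$. Parts (ii) and (iii) are essentially soft consequences of (i) together with standard machinery; (i) is the real computation.

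For (ii), I would first note that since $|k_2|=1$ we have $\omega^2 k_2\in\partial\mathbb{D}$, and a direct inspection of \eqref{lmexpressions intro} shows $l_j(k),z_j(k)\in i\mathbb{R}$ for $|k|=1$ (because $(\omega^j k)^{-1}=\overline{\omega^j k}$). Hence $\Phi_{32}(\zeta,\omega^2 k_2)\in i\mathbb{R}$ and $|e^{-\frac{t}{2}\Phi_{32}(\zeta,\omega^2 k_2)\tilde\sigma}|=1$. Combining this with (i) and the explicit formula \eqref{def of lambda2} for $\lambda_2$, and using continuity of $\zeta\mapsto \nu_j(\zeta)$ and $\zeta\mapsto k_2(\zeta)$ on the compact set $\mathcal{I}$, gives $\sup_{\mathcal{I}\times[2,\infty)}|Y_2^{\pm 1}|<\infty$.

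The main work is (i). First, $|t^{i(\nu_4-\nu_5-\nu_2)}|=1$ since the $\nu_j$ are real. The remaining factors in $\tilde d_{2,0}$ contribute
\[
|\tilde d_{2,0}|=\exp\bigl(-2\re\chi_2+\re\chi_3-\re\tilde\chi_4+2\re\tilde\chi_5-\nu_3\arg_i(\omega^2k_2-i)+2(\nu_5+\nu_2-\nu_4)\arg z_{2,\star}\bigr)|\mathcal{D}_2(\omega^2 k_2)|,
\]
all $\chi$'s and $\arg$'s evaluated at $\omega^2 k_2$. The key observation, used already in the proof of Lemma~\ref{IIbis lemma: bound on Y}, is that each function $|\delta_j(\zeta,k)|$ is \emph{piecewise constant on $\partial\mathbb{D}$}: indeed, from Lemma~\ref{IIbis deltalemma}(a) the imaginary part of each $\chi_j(\zeta,k)$, for $k$ ranging over a single arc of $\partial\mathbb{D}$ between consecutive endpoints of its integration contour, can be computed by the residue/Plemelj identity and shows that $|\delta_j|$ is a $k$-independent exponential of the linear combination of the $\nu_\ell$'s and the argument of the boundary point. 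I would therefore tabulate $|\delta_j(\zeta,\cdot)|$ on each of the arcs separating the six points $k,\omega k,\omega^2 k,k^{-1},(\omega k)^{-1},(\omega^2 k)^{-1}$ with $k=\omega^2 k_2$, substitute the resulting (explicit) constants into the definition of $\mathcal{D}_2(\omega^2 k_2)$, and simplify. The cancellations should mirror those that produce $|\tilde d_{1,0}|=1$, except that here two of the logarithm branches ($\delta_4,\delta_5$) are the tilded ones, which forces a non-trivial residual exponential $e^{\pi(2\nu_2-\nu_4)}$. The bookkeeping of thirty factors is the main obstacle: one must track consistently which branch of $\arg_{s}$ versus $\tilde\arg_s$ is active for each evaluation point, and keep straight the powers $\pm 1,\pm 2$ appearing in the definition of $\mathcal{D}_2$.

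For (iii), I would expand each ingredient of $d_{2,1}(\zeta,k)$ around $k=\omega^2 k_2$. The bounds \eqref{II asymp chi at omega2k2} handle $\chi_2-\chi_2(\omega^2 k_2)$, $\chi_3-\chi_3(\omega^2 k_2)$, $\tilde\chi_4-\tilde\chi_4(\omega^2 k_2)$ and $\tilde\chi_5-\tilde\chi_5(\omega^2 k_2)$, giving contributions $O(|k-\omega^2 k_2|(1+|\ln|k-\omega^2 k_2||))$. The term $\ln_i(k-i)-\ln_i(\omega^2 k_2-i)$ is analytic near $\omega^2 k_2\ne i$, hence $O(k-\omega^2 k_2)$. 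The factor $\hat z_2^{2i(\nu_4-\nu_5-\nu_2)}$ equals $1+O(|k-\omega^2 k_2||\ln|k-\omega^2 k_2||)$ since $\ln\hat z_2=O(k-\omega^2 k_2)$ is analytic with $\hat z_2(\omega^2 k_2)=1$ but the exponent is purely imaginary. Finally $\mathcal{D}_2(k)/\mathcal{D}_2(\omega^2 k_2)=1+O(k-\omega^2 k_2)$ by smoothness of the $\delta_j$'s away from their jump contours. Multiplying and using $|e^a-1|\le|a|e^{|a|}$ yields the stated bound, completing the proof.
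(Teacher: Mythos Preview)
Your approach is correct and is essentially the same as the paper's (which does not write out a proof but refers to \cite[Lemma 9.9]{CLmain}, the exact analog of the computation in Lemma~\ref{IIbis lemma: bound on Y} that you are mirroring). One small remark: in part (iii) your estimate $\hat z_2^{2i(\nu_4-\nu_5-\nu_2)}=1+O(|k-\omega^2 k_2||\ln|k-\omega^2 k_2||)$ is a harmless overestimate---since $\ln\hat z_2$ is analytic with $\ln\hat z_2(\omega^2 k_2)=0$, this factor is actually $1+O(|k-\omega^2 k_2|)$ with no logarithm, but of course the stated bound still holds.
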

The following lemma can be proved in the same way as \cite[Lemmas 9.10 and 9.12]{CLmain}.
\begin{lemma}\label{IIbis k0lemma green}
For each $t \geq 2$ and $\zeta \in \mathcal{I}$, the function $m^{\omega^{2} k_2}(x,t,k)$ defined in \eqref{IIbis m omega2k2 def} is analytic for $k \in D_\epsilon(\omega^{2} k_2) \setminus \mathcal{X}_{2}^\epsilon$. Moreover, $m^{\omega^{2} k_2}(x,t,k)$ is uniformly bounded for $t \geq 2$, $\zeta \in \mathcal{I}$, and $k \in D_\epsilon(\omega^{2} k_2) \setminus \mathcal{X}_{2}^\epsilon$. On $\mathcal{X}_{2}^\epsilon$, $m^{\omega^{2} k_2}$ satisfies $m_+^{\omega^{2} k_2} =  m_-^{\omega^{2} k_2} v^{\omega^{2} k_2}$, where the jump matrix $v^{\omega^{2} k_2}$ obeys
\begin{align}\label{IIbis v3vk0estimate green}
\begin{cases}
 \| v^{(3)} - v^{\omega^{2} k_2} \|_{L^1(\mathcal{X}_{2}^\epsilon)} \leq C t^{-1} \ln t,
	\\
\| v^{(3)} - v^{\omega^{2} k_2} \|_{L^\infty(\mathcal{X}_{2}^\epsilon)} \leq C t^{-1/2} \ln t,
\end{cases} \qquad \zeta \in \mathcal{I}, \ t \geq 2.
\end{align}
Furthermore, as $t \to \infty$,
\begin{align}\label{IIbis mmodmuestimate2 green}
& \| m^{\omega^{2} k_2}(x,t,\cdot) - I \|_{L^\infty(\partial D_\epsilon(\omega^{2} k_2))} = O(t^{-1/2}),
	\\ \label{IIbis mmodmuestimate1 green}
& m^{\omega^{2} k_2} - I = \frac{Y_{2}(\zeta, t)m_1^{X,(2)} Y_{2}(\zeta, t)^{-1}}{z_{2,\star}\sqrt{t} (k-\omega^{2} k_2) \hat{z}_{2}(\zeta,k)} + O(t^{-1})
\end{align}
uniformly for $\zeta \in \mathcal{I}$, where $m_1^{X,(2)} = m_1^{X,(2)}(q_{2},q_{4},q_{5},q_{6})$ is given by \eqref{II mXasymptotics 3 green}.
\end{lemma}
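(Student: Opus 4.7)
The plan is to parallel the proof of \cite[Lemma 9.10]{CLmain}, exploiting the fact that the construction \eqref{IIbis m omega2k2 def} was engineered precisely so that $m^{\omega^{2} k_{2}}$ inherits its structural properties from the model RH problem of Lemma \ref{II Xlemma 3 green}. First I would verify analyticity and boundedness: since $k \mapsto z_{2}(\zeta,t,k)$ is conformal from $D_{\epsilon}(\omega^{2}k_{2})$ to a neighborhood of $0$ and maps $\mathcal{X}_{2}^{\epsilon}$ bijectively onto the jump contour of $m^{X,(2)}$, analyticity of $m^{X,(2)}$ on its domain transfers to analyticity of $m^{\omega^{2}k_{2}}$ on $D_{\epsilon}(\omega^{2}k_{2})\setminus \mathcal{X}_{2}^{\epsilon}$. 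Uniform boundedness then follows from the boundedness of $m^{X,(2)}$ together with the boundedness of $Y_{2}^{\pm 1}$ established in Lemma \ref{IIbis lemma: bound on Y green}, because conjugation by $Y_{2}$ cancels out on the diagonal part.

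Next I would establish \eqref{IIbis v3vk0estimate green} by writing
\begin{align*}
v^{(3)}-v^{\omega^{2}k_{2}} = Y_{2}(\tilde v - \tilde v^{\omega^{2}k_{2}})Y_{2}^{-1},
\end{align*}
and controlling $\tilde v - \tilde v^{\omega^{2}k_{2}}$ entry by entry on each $\mathcal{X}_{2,j}^{\epsilon}$. Comparing the displayed expressions for $\tilde{v}_{7}^{-1},\tilde{v}_{9}^{-1},\tilde{v}_{10},\tilde{v}_{12}$ with \eqref{vtildeomega2k2def}, the difference is controlled by three error sources: the factor $d_{2,1}(\zeta,k)-1$, estimated via \eqref{IIbis d1estimate green}; the difference between $\big(\tfrac{r_{j}(\frac{1}{\omega k})-r_{j}(k)r_{j}(\omega^{2}k)}{\cdots}\big)_{a}$ and its value frozen at $k_{\star}=\omega^{2}k_{2}$, controlled by the Taylor-type estimate of Lemma \ref{IIbis decompositionlemma}(b) (with the $e^{\tfrac{t}{4}|\re \Phi_{21}|}$ factor absorbed by the decay of the exponentials $e^{\pm i z_{2}^{2}/2}$ off the saddle); and the replacement of $r_{j,r}$-type terms, which are $O(t^{-N})$ by Lemma \ref{IIbis decompositionlemma}(c). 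On $\mathcal{X}_{2,j}^{\epsilon}$ one has $|k-\omega^{2}k_{2}|\leq \epsilon$ and $|z_{2}|\leq C\sqrt{t}\,|k-\omega^{2}k_{2}|$, and the Gaussian factor $e^{-|z_{2}|^{2}/2}$ forces the dominant contribution to be near the saddle; a standard change of variables $u=\sqrt{t}(k-\omega^{2}k_{2})$ yields the logarithmic factor and the $t^{-1}\ln t$ (resp.\ $t^{-1/2}\ln t$) bound in $L^{1}$ (resp.\ $L^{\infty}$).

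Finally, \eqref{IIbis mmodmuestimate2 green} and \eqref{IIbis mmodmuestimate1 green} follow from the large-$z_{2}$ asymptotics \eqref{II mXasymptotics 3 green} of $m^{X,(2)}$. On $\partial D_{\epsilon}(\omega^{2}k_{2})$ we have $|z_{2}|=|z_{2,\star}|\sqrt{t}|k-\omega^{2}k_{2}||\hat{z}_{2}|\asymp \sqrt{t}$, so
\begin{align*}
m^{X,(2)}(q_{2},q_{4},q_{5},q_{6},z_{2}) = I + \frac{m_{1}^{X,(2)}}{z_{2}} + O(t^{-1}),
\end{align*}
uniformly on $\partial D_{\epsilon}(\omega^{2}k_{2})$ and $\zeta \in \mathcal I$. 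Substituting $z_{2}=z_{2,\star}\sqrt{t}(k-\omega^{2}k_{2})\hat{z}_{2}$ and conjugating by $Y_{2}$ (whose norm is bounded independently of $t$, $\zeta$) yields \eqref{IIbis mmodmuestimate1 green}, and taking $L^{\infty}$-norms gives \eqref{IIbis mmodmuestimate2 green}.

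The main obstacle is bookkeeping in the jump comparison of Step 2: the matrix $\tilde v$ on $\mathcal{X}_{2}^\epsilon$ carries complex powers $z_{2}^{i(2\nu_{5}-\nu_{4})}z_{2,(0)}^{i(2\nu_{2}-\nu_{4})}$, the exponential $e^{\pm i z_{2}^{2}/2}$, the factor $d_{2,1}$, and analytic approximants of rational combinations of $r_{1},r_{2},f$, all of which must align exactly with the entries of $v^{\omega^{2}k_{2}}$ specified in \eqref{vtildeomega2k2def}. The choice \eqref{def of lambda2} of $\lambda_{2}$ and the nontrivial modulus \eqref{IIbis d0estimate green} of $\tilde{d}_{2,0}$ (coming from the fact that $z_{2,(0)}$ uses an argument branch in $(0,2\pi)$) must be tracked through the conjugation so that the exponents $(q_2,q_4,q_5,q_6)$ of Lemma \ref{II Xlemma 3 green} come out correctly and the identity $q_{4}-\bar q_{5}-q_{2}\bar q_{6}=0$, which is what makes the model problem solvable, matches the symmetry \eqref{r1r2 relation on the unit circle} of the Boussinesq reflection coefficients.
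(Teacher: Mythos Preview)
Your proposal is correct and follows essentially the same approach as the paper, which simply states that the lemma ``can be proved in the same way as \cite[Lemmas 9.10 and 9.12]{CLmain}'' and omits the details. Your outline---transferring analyticity and boundedness from the model problem via the conformal map $z_2$ and the bounded conjugation by $Y_2$, controlling $v^{(3)}-v^{\omega^2 k_2}$ through the three error sources $d_{2,1}-1$, the Taylor estimate of Lemma~\ref{IIbis decompositionlemma}(b), and the remainder terms, and deducing \eqref{IIbis mmodmuestimate2 green}--\eqref{IIbis mmodmuestimate1 green} from the large-$z_2$ expansion \eqref{II mXasymptotics 3 green}---is precisely the argument of \cite[Lemmas 9.10 and 9.12]{CLmain} adapted to the present saddle point.
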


\section{The $n^{(3)}\to \hat{n}$ transformation}\label{section:small norm}

Using the relations
\begin{align*}
& m^{\omega k_4}(x,t,k) = \mathcal{A} m^{\omega k_4}(x,t,\omega k)\mathcal{A}^{-1} = \mathcal{B} m^{\omega k_4}(x,t,k^{-1}) \mathcal{B}, \\
& m^{\omega^{2} k_2}(x,t,k) = \mathcal{A} m^{\omega^{2} k_2}(x,t,\omega k)\mathcal{A}^{-1} = \mathcal{B} m^{\omega^{2} k_2}(x,t,k^{-1}) \mathcal{B},
\end{align*}
we extend the domain of definition of $m^{\omega k_4}$ and $m^{\omega^{2} k_2}$ from $D_\epsilon(\omega k_4)$ and $D_\epsilon(\omega^{2} k_2)$ to $\mathcal{D}_{\omega k_4}$ and $\mathcal{D}_{\omega^{2} k_2}$ respectively, where
\begin{align*}
& \mathcal{D}_{\omega k_4} = D_\epsilon(\omega k_4) \cup \omega D_\epsilon(\omega k_4) \cup \omega^2 D_\epsilon(\omega k_4) \cup D_\epsilon(\tfrac{1}{\omega k_4}) \cup \omega D_\epsilon(\tfrac{1}{\omega k_4}) \cup \omega^2 D_\epsilon(\tfrac{1}{\omega k_4}), \\
& \mathcal{D}_{\omega^{2} k_2} = D_\epsilon(\omega^{2} k_2) \cup \omega D_\epsilon(\omega^{2} k_2) \cup \omega^2 D_\epsilon(\omega^{2} k_2) \cup D_\epsilon(\tfrac{1}{\omega^{2} k_2}) \cup \omega D_\epsilon(\tfrac{1}{\omega^{2} k_2}) \cup \omega^2 D_\epsilon(\tfrac{1}{\omega^{2} k_2}).
\end{align*}
We will prove that 
\begin{align}\label{Sector IIbis final transfo}
\hat{n} :=
\begin{cases}
n^{(3)} (m^{\omega k_4})^{-1}, & k \in \mathcal{D}_{\omega k_4}, \\
n^{(3)} (m^{\omega^{2} k_2})^{-1}, & k \in \mathcal{D}_{\omega^{2} k_2}, \\
n^{(3)}, & \text{elsewhere},
\end{cases}
\end{align}
satisfies a small-norm RH problem as $t \to \infty$, $\zeta \in \mathcal{I}$. Let $\hat{\Gamma} = \Gamma^{(3)} \cup \partial \mathcal{D}$ with $\mathcal{D} := \mathcal{D}_{\omega k_4}\cup \mathcal{D}_{\omega^{2} k_2}$ be the contour represented in Figure \ref{IIbis Gammahat.pdf}. We orient the circles that are part of $\partial \mathcal{D}$ in the clockwise direction, and define $\hat{v}$ by
\begin{align}\label{def of vhat II}
\hat{v}= \begin{cases}
v^{(3)}, & k \in \hat{\Gamma} \setminus \bar{\mathcal{D}}, 	\\
m^{\omega k_4}, & k \in \partial \mathcal{D}_{\omega k_4}, \\
m^{\omega^{2} k_2}, & k \in \partial \mathcal{D}_{\omega^{2} k_2}, \\
m_-^{\omega k_4} v^{(3)}(m_+^{\omega k_4})^{-1}, & k \in \hat{\Gamma} \cap \mathcal{D}_{\omega k_4}, \\
m_-^{\omega^{2} k_2} v^{(3)}(m_+^{\omega^{2} k_2})^{-1}, & k \in \hat{\Gamma} \cap \mathcal{D}_{\omega^{2} k_2}.
\end{cases}
\end{align}
The function $\hat{n}$ satisfies the following RH problem: (a) $\hat{n}:\mathbb{C}\setminus \hat{\Gamma}\to \mathbb{C}^{1\times 3}$ is analytic, (b) $\hat{n}_{+} = \hat{n}_{-}\hat{v}$ for $k \in \hat{\Gamma}\setminus \hat{\Gamma}_{\star}$, where $\hat{\Gamma}_{\star}$ is the set of self-intersection points of $\hat{\Gamma}$, (c) $\hat{n}(x,t,k) = (1,1,1)+O(k^{-1})$ as $k \to \infty$, and (d) $\hat{n}(x,t,k)$ remains bounded as $k\to k_{\star}\in \hat{\Gamma}_{\star}$.
\begin{figure}
\begin{center}
\begin{tikzpicture}[master]
\node at (0,0) {};
\draw[black,line width=0.5 mm] (0,0)--(30:7.5);
\draw[black,line width=0.5 mm,->-=0.25,->-=0.57,->-=0.71,->-=0.91] (0,0)--(90:7.5);
\draw[black,line width=0.5 mm] (0,0)--(150:7.5);
\draw[dashed,black,line width=0.15 mm] (0,0)--(60:7.5);
\draw[dashed,black,line width=0.15 mm] (0,0)--(120:7.5);

\draw[black,line width=0.5 mm] ([shift=(30:3*1.5cm)]0,0) arc (30:150:3*1.5cm);
\draw[black,arrows={-Triangle[length=0.27cm,width=0.18cm]}]
($(66:3*1.5)$) --  ++(-21:0.001);
\draw[black,arrows={-Triangle[length=0.21cm,width=0.14cm]}]
($(116.5:3*1.5)$) --  ++(116+90:0.001);

\draw[black,line width=0.5 mm] ([shift=(30:3*1.5cm)]0,0) arc (30:150:3*1.5cm);

\draw[black,arrows={-Triangle[length=0.21cm,width=0.14cm]}]
($(86:3*1.5)$) --  ++(86+90:0.001);
\draw[black,arrows={-Triangle[length=0.27cm,width=0.18cm]}]
($(100:3*1.5)$) --  ++(100+90:0.001);

\draw[black,line width=0.5 mm] (150:3.65)--($(129.688:3*1.5)+(129.688+135:0.5)$)--(129.688:3*1.5)--($(129.688:3*1.5)+(129.688+45:0.5)$)--(150:5.8);
\draw[black,line width=0.5 mm] (30:3.65)--($(38.4686:3*1.5)+(38.4686-135:0.5)$)--(38.4686:3*1.5)--($(38.4686:3*1.5)+(38.4686-45:0.5)$)--(30:5.8);
\draw[black,line width=0.5 mm,-<-=0.07,->-=0.78] (90:3.65)--($(80:3*1.5)+(80+135:0.5)$)--(80:3*1.5)--($(80:3*1.5)+(80+45:0.5)$)--(90:5.8);
\draw[black,line width=0.5 mm,->-=0.25,-<-=0.65] (90:3.65)--($(110.3:3*1.5)+(110.3-135:0.5)$)--(110.3:3*1.5)--($(110.3:3*1.5)+(110.3-45:0.5)$)--(90:5.8);
\draw[black,line width=0.5 mm,-<-=0.07,->-=0.78] (120:3.65)--($(110.3:3*1.5)+(110.3+135:0.5)$)--(110.3:3*1.5)--($(110.3:3*1.5)+(110.3+45:0.5)$)--(120:5.8);
\draw[black,line width=0.5 mm] (120:3.65)--($(129.688:3*1.5)+(129.688-135:0.5)$)--(129.688:3*1.5)--($(129.688:3*1.5)+(129.688-45:0.5)$)--(120:5.8);
\draw[black,line width=0.5 mm,-<-=0.12,->-=0.78] (60:3.65)--($(80:3*1.5)+(80-135:0.5)$)--(80:3*1.5)--($(80:3*1.5)+(80-45:0.5)$)--(60:5.8);
\draw[black,line width=0.5 mm] (60:3.65)--($(38.4686:3*1.5)+(38.4686+135:0.5)$)--(38.4686:3*1.5)--($(38.4686:3*1.5)+(38.4686+45:0.5)$)--(60:5.8);

\draw[black,line width=0.5 mm,->-=0.6] (60:4.5)--(60:5.8);
\draw[black,line width=0.5 mm,->-=0.75] (60:3.65)--(60:4.5);
\draw[black,line width=0.5 mm,->-=0.6] (120:4.5)--(120:5.8);
\draw[black,line width=0.5 mm,->-=0.75] (120:3.65)--(120:4.5);

\draw[black,line width=0.5 mm] (129.688:3.65)--(129.688:5.8);
\draw[black,line width=0.5 mm,-<-=0.17,->-=0.84] (110.3:3.65)--(110.3:5.8);
\draw[black,line width=0.5 mm,-<-=0.17,->-=0.84] (80:3.65)--(80:5.8);
\draw[black,line width=0.5 mm] (38.4686:3.65)--(38.4686:5.8);

\draw[black,line width=0.5 mm] ([shift=(30:3.65cm)]0,0) arc (30:150:3.65cm);
\draw[black,line width=0.5 mm] ([shift=(30:5.8cm)]0,0) arc (30:150:5.8cm);

\draw[blue,fill] (129.688:3*1.5) circle (0.12cm);
\draw[green,fill] (110.3:3*1.5) circle (0.12cm);
\draw[red,fill] (80:3*1.5) circle (0.12cm);
\draw[blue,fill] (38.4686:3*1.5) circle (0.12cm);

\draw[black,line width=0.5 mm,->-=0.6] (60:4.5)--(60:5.8);
\draw[black,line width=0.5 mm,->-=0.75] (60:3.65)--(60:4.5);
\draw[black,line width=0.5 mm,->-=0.6] (120:4.5)--(120:5.8);
\draw[black,line width=0.5 mm,->-=0.75] (120:3.65)--(120:4.5);

\draw[black,line width=0.5 mm] (129.688:3*1.5) circle (0.5cm);
\draw[black,line width=0.5 mm] (110.3:3*1.5) circle (0.5cm);
\draw[black,line width=0.5 mm] (80:3*1.5) circle (0.5cm);
\draw[black,line width=0.5 mm] (38.4686:3*1.5) circle (0.5cm);
\end{tikzpicture}
\end{center}
\begin{figuretext}
\label{IIbis Gammahat.pdf}The contour $\hat{\Gamma} = \Gamma^{(3)} \cup \partial \mathcal{D}$ for $\arg k \in [\frac{\pi}{6},\frac{5\pi}{6}]$ (solid), the boundary of $\mathsf{S}$ (dashed) and, from right to left, the saddle points $k_{3}$ (blue), $\omega k_{4}$ (red), $\omega^{2}k_{2}$ (green), and $k_{1}$ (blue).
\end{figuretext}
\end{figure} 

\begin{lemma}\label{IIbis whatlemma}
Let $\hat{w} = \hat{v}-I$. Uniformly for $t \geq 2$ and $\zeta \in \mathcal{I}$, we have
\begin{subequations}\label{IIbis hatwestimate}
\begin{align}\label{IIbis hatwestimate1}
& \| \hat{w}\|_{(L^1\cap L^\infty)(\hat{\Gamma} \setminus (\partial \mathcal{D} \cup \hat{\mathcal{X}}^\epsilon))} \leq C t^{-1},
	\\\label{IIbis hatwestimate3}
& \| \hat{w}\|_{(L^1\cap L^{\infty})(\partial \mathcal{D})} \leq C t^{-1/2},	\\\label{IIbis hatwestimate4}
& \| \hat{w}\|_{L^1(\hat{\mathcal{X}}^\epsilon)} \leq C t^{-1}\ln t,
	\\\label{IIbis hatwestimate5}
& \| \hat{w}\|_{L^\infty(\hat{\mathcal{X}}^\epsilon)} \leq C t^{-1/2}\ln t,
\end{align}
\end{subequations}
\end{lemma}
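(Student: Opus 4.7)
The plan is to verify each of the four bounds directly by expanding the piecewise definition of $\hat{v}$ in \eqref{def of vhat II} and invoking the estimates already assembled. Because the relevant bounds near $\omega k_4$ and $\omega^2 k_2$ have all been extended to the remaining ten saddle points by the $\mathcal{A}$- and $\mathcal{B}$-symmetries used to define $m^{\omega k_4}$ and $m^{\omega^2 k_2}$ on $\mathcal{D}_{\omega k_4}$ and $\mathcal{D}_{\omega^2 k_2}$, it suffices to argue the estimates on the portion of $\hat{\Gamma}$ lying in the sector $\mathsf{S}$.

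For \eqref{IIbis hatwestimate1}, on $\hat{\Gamma}\setminus(\partial\mathcal{D}\cup\hat{\mathcal{X}}^\epsilon)$ we have $\hat{v}=v^{(3)}$ by definition, so $\hat{w}=v^{(3)}-I$ and the bound is immediate from Lemma \ref{II v3lemma}. For \eqref{IIbis hatwestimate3}, on $\partial\mathcal{D}_{\omega k_4}$ we have $\hat{v}=m^{\omega k_4}$, so $\hat{w}=m^{\omega k_4}-I$ and \eqref{IIbis mmodmuestimate2} gives the $L^\infty$ bound $O(t^{-1/2})$; the $L^1$ bound follows because $\partial D_\epsilon(\omega k_4)$ has finite length. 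The analogous reasoning using \eqref{IIbis mmodmuestimate2 green} handles $\partial\mathcal{D}_{\omega^2 k_2}$, and the symmetry extensions yield the bound on the remaining ten boundary circles contained in $\partial\mathcal{D}$.

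The key step is \eqref{IIbis hatwestimate4}--\eqref{IIbis hatwestimate5}. On $\hat{\Gamma}\cap\mathcal{D}_{\omega k_4}$, the jump relation $m_+^{\omega k_4}=m_-^{\omega k_4}v^{\omega k_4}$ yields the algebraic identity
\begin{align*}
\hat{v}-I \;=\; m_-^{\omega k_4}v^{(3)}(m_+^{\omega k_4})^{-1}-m_-^{\omega k_4}v^{\omega k_4}(m_+^{\omega k_4})^{-1}\;=\;m_-^{\omega k_4}\bigl(v^{(3)}-v^{\omega k_4}\bigr)(m_+^{\omega k_4})^{-1}.
\end{align*}
Since Lemma \ref{IIbis k0lemma} guarantees that $m^{\omega k_4}$ (and hence its inverse) is uniformly bounded on $\mathcal{X}_{1}^\epsilon$, applying the two bounds in \eqref{IIbis v3vk0estimate} directly yields $\|\hat{w}\|_{L^1(\mathcal{X}_1^\epsilon)}\leq Ct^{-1}\ln t$ and $\|\hat{w}\|_{L^\infty(\mathcal{X}_1^\epsilon)}\leq Ct^{-1/2}\ln t$. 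The same identity combined with Lemma \ref{IIbis k0lemma green} and \eqref{IIbis v3vk0estimate green} controls $\hat{w}$ on $\mathcal{X}_2^\epsilon$, and the symmetries propagate the estimates to the other ten crosses comprising $\hat{\mathcal{X}}^\epsilon$.

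The main obstacle is in fact a conceptual rather than an analytic one: one must check that the symmetry extension of $m^{\omega k_4}$ and $m^{\omega^2 k_2}$ to $\mathcal{D}_{\omega k_4}$ and $\mathcal{D}_{\omega^2 k_2}$ is compatible with the jumps of $v^{(3)}$ on the relevant portions of $\hat{\Gamma}$, so that the cancellation $m_-v^{\text{loc}}(m_+)^{-1}=I$ still occurs on every piece of $\hat{\Gamma}\cap\mathcal{D}$ (not only near $\omega k_4$ and $\omega^2 k_2$). This compatibility is precisely guaranteed by the symmetries \eqref{vjsymm} satisfied by $v^{(3)}$, so there is no genuine additional work — only verification that the pieces fit together. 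Once this is checked, the four estimates in \eqref{IIbis hatwestimate} follow mechanically from the lemmas cited above.
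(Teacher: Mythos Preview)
Your approach matches the paper's, but there is one imprecision in your treatment of \eqref{IIbis hatwestimate1}. You assert that $\hat{v}=v^{(3)}$ on all of $\hat{\Gamma}\setminus(\partial\mathcal{D}\cup\hat{\mathcal{X}}^\epsilon)$, but this is false on the portions of $\Gamma^{(3)}$ lying \emph{inside} $\mathcal{D}$ yet off the crosses --- e.g.\ the arcs $\Gamma_2^{(3)}\cap D_\epsilon(\omega k_4)$ and $\Gamma_5^{(3)}\cap D_\epsilon(\omega k_4)$ on the unit circle. There, \eqref{def of vhat II} gives $\hat{v}=m_-^{\omega k_4}v^{(3)}(m_+^{\omega k_4})^{-1}$. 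Since $m^{\omega k_4}$ has no jump across these arcs (its jump contour is only $\mathcal{X}_1^\epsilon$), one has $m_+=m_-=m$ and hence $\hat{w}=m(v^{(3)}-I)m^{-1}$; now the uniform boundedness of $m^{\omega k_4}$ from Lemma~\ref{IIbis k0lemma} together with Lemma~\ref{II v3lemma} gives the required $O(t^{-1})$ bound. The paper's proof makes exactly this point (``for the part inside $\mathcal{D}$, the uniform boundedness of $m^{\omega k_4}$ and $m^{\omega^2 k_2}$''). With this small fix your argument is complete and identical to the paper's.
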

\begin{proof}
Since $\hat{\Gamma} \setminus (\partial \mathcal{D} \cup \hat{\mathcal{X}}^\epsilon) = \Gamma^{(3)}\setminus \hat{\mathcal{X}}^\epsilon$, \eqref{IIbis hatwestimate1} is a consequence of \eqref{def of vhat II}, Lemma \ref{II v3lemma}, and (for the part inside $\mathcal{D}$) the uniform boundedness of $m^{\omega k_4}$ and $m^{\omega^2 k_2}$. 
The estimate \eqref{IIbis hatwestimate3} follows from \eqref{def of vhat II}, \eqref{IIbis mmodmuestimate2}, and \eqref{IIbis mmodmuestimate2 green}. Finally, \eqref{IIbis hatwestimate4} and \eqref{IIbis hatwestimate5} follow from \eqref{def of vhat II}, \eqref{IIbis v3vk0estimate}, \eqref{IIbis v3vk0estimate green}, and the uniform boundedness of $m^{\omega k_4}$ and $m^{\omega^2 k_2}$.
\end{proof} 
For a function $h$ defined on $\hat{\Gamma}$, we define $\hat{\mathcal{C}}h$ by
\begin{align*}
(\hat{\mathcal{C}}h)(k) = \frac{1}{2\pi i} \int_{\hat{\Gamma}} \frac{h(k')dk'}{k' - k}, \qquad k \in \C \setminus \hat{\Gamma}.
\end{align*}

Lemma \ref{IIbis whatlemma} implies that
\begin{align}\label{IIbis hatwLinfty}
\begin{cases}
\|\hat{w}\|_{L^1(\hat{\Gamma})}\leq C t^{-1/2},
	\\
\|\hat{w}\|_{L^\infty(\hat{\Gamma})}\leq C t^{-1/2}\ln t,
\end{cases}	 \qquad t \geq 2, \ \zeta \in \mathcal{I}.
\end{align}
Therefore, using the estimate $\| f \|_{L^p} \leq \| f \|_{L^1}^{1/p}\|f \|_{L^{\infty}}^{(p-1)/p}$, we find
\begin{align}\label{IIbis Lp norm of what}
& \|\hat{w}\|_{L^p(\hat{\Gamma})} 
\leq C t^{-\frac{1}{2}} (\ln t)^{\frac{p-1}{p}},  \qquad t \geq 2, \ \zeta \in \mathcal{I},
\end{align}
for each $1 \leq p \leq \infty$. It follows from \eqref{IIbis Lp norm of what} that
$\hat{w} \in L^{2}(\hat{\Gamma}) \cap L^{\infty}(\hat{\Gamma})$, and therefore $\hat{\mathcal{C}}_{\hat{w}}h := \hat{\mathcal{C}}_{-}(h \hat{w})$ is well-defined as an operator $\hat{\mathcal{C}}_{\hat{w}}=\hat{\mathcal{C}}_{\hat{w}(x,t,\cdot)}: L^{2}(\hat{\Gamma})+L^{\infty}(\hat{\Gamma}) \to L^{2}(\hat{\Gamma})$. It also follows from \eqref{IIbis Lp norm of what} that there exists a $T > 0$ such that $I - \hat{\mathcal{C}}_{\hat{w}(x, t, \cdot)} \in \mathcal{B}(L^{2}(\hat{\Gamma}))$ is invertible whenever $t \geq T$ and $\zeta \in \mathcal{I}$, where $\mathcal{B}(L^{2}(\hat{\Gamma}))$ denotes the space of bounded linear operators on $L^{2}(\hat{\Gamma})$. By standard theory for small-norm RH problems, we have
\begin{align}\label{IIbis hatmrepresentation}
\hat{n}(x, t, k) = (1,1,1) + \hat{\mathcal{C}}(\hat{\mu}\hat{w}) = (1,1,1) + \frac{1}{2\pi i}\int_{\hat{\Gamma}} \hat{\mu}(x, t, s) \hat{w}(x, t, s) \frac{ds}{s - k}
\end{align}
for $t \geq T$ and $\zeta \in \mathcal{I}$, where 
\begin{align}\label{IIbis hatmudef}
\hat{\mu} = (1,1,1) + (I - \hat{\mathcal{C}}_{\hat{w}})^{-1}\hat{\mathcal{C}}_{\hat{w}}(1,1,1) \in (1,1,1) + L^{2}(\hat{\Gamma}).
\end{align}
Moreover, since $\|\hat{\mathcal{C}}_{-}\|_{\mathcal{B}(L^{p}(\hat{\Gamma}))}< \infty$ for $p \in (1,\infty)$, we infer from \eqref{IIbis Lp norm of what} and \eqref{IIbis hatmudef} that for any $p \in (1,\infty)$ there exists $C_{p}>0$ such that
\begin{align}\label{estimate on mu}
& \|\hat{\mu} - (1,1,1)\|_{L^p(\hat{\Gamma})} \leq  C_{p} t^{-\frac{1}{2}}(\ln t)^{\frac{p-1}{p}}
\end{align}
holds for all sufficiently large $t$ and all $\zeta \in \mathcal{I}$. Now, we turn to the problem of finding asymptotics for $\hat{n}$. We introduce the following nontangential limit:
\begin{align*}
& \hat{n}^{(1)}(x,t):=\ntlim_{k\to \infty} k(\hat{n}(x,t,k) - (1,1,1))
= - \frac{1}{2\pi i}\int_{\hat{\Gamma}} \hat{\mu}(x,t,k) \hat{w}(x,t,k) dk.
\end{align*}
Since
\begin{align*}
\hat{n}^{(1)}(x,t) = & -\frac{(1,1,1)}{2\pi i}\int_{\partial \mathcal{D}} \hat{w}(x,t,k) dk  -\frac{(1,1,1)}{2\pi i}\int_{\hat{\Gamma}\setminus\partial \mathcal{D}} \hat{w}(x,t,k) dk
	\\
& -\frac{1}{2\pi i}\int_{\hat{\Gamma}} (\hat{\mu}(x,t,k)-(1,1,1))\hat{w}(x,t,k) dk,
\end{align*}
we conclude from \eqref{estimate on mu} and Lemma \ref{IIbis whatlemma} that
\begin{align}\label{IIbis limlhatm}
& \hat{n}^{(1)}(x,t) = -\frac{(1,1,1)}{2\pi i}\int_{\partial \mathcal{D}} \hat{w}(x,t,k) dk + O(t^{-1}\ln t) \qquad \mbox{as } t \to \infty,
\end{align}
uniformly for $\zeta \in \mathcal{I}$. Let us define $\{F_{1}^{(l)},F_{2}^{(l)}\}_{l \in \mathbb{Z}}$ by
\begin{align*}
& F_{1}^{(l)}(\zeta,t) = - \frac{1}{2\pi i} \int_{\partial D_\epsilon(\omega k_4)} k^{l-1}\hat{w}(x,t,k) dk	
= - \frac{1}{2\pi i}\int_{\partial D_\epsilon(\omega k_4)}k^{l-1}(m^{\omega k_4} - I) dk, \\
& F_{2}^{(l)}(\zeta,t) = - \frac{1}{2\pi i} \int_{\partial D_\epsilon(\omega^{2} k_2)} k^{l-1}\hat{w}(x,t,k) dk	
= - \frac{1}{2\pi i}\int_{\partial D_\epsilon(\omega^{2} k_2)}k^{l-1}(m^{\omega^{2} k_2} - I) dk.
\end{align*}
Using \eqref{IIbis mmodmuestimate1}, \eqref{IIbis mmodmuestimate1 green}, and $\hat{z}_{1}(\zeta,\omega k_{4})=1=\hat{z}_{2}(\zeta,\omega^{2} k_{2})$, and recalling that $\partial D_\epsilon(\omega k_{4})$ and $\partial D_\epsilon(\omega^{2} k_{2})$ are oriented clockwise, we find
\begin{align}
F_{1}^{(l)}(\zeta, t) &  =  -i(\omega k_{4})^{l}Z_{1}(\zeta,t) + O(t^{-1}) & & \mbox{as } t \to \infty, \label{IIbis asymptotics for F1l} \\
F_{2}^{(l)}(\zeta, t) & =  -i(\omega^{2} k_{2})^{l}Z_{2}(\zeta,t) + O(t^{-1}) & & \mbox{as } t \to \infty, \label{IIbis asymptotics for F4l}
\end{align}
uniformly for $\zeta \in \mathcal{I}$, where
\begin{subequations}\label{def of Z1Z2}
\begin{align}
& Z_{1}(\zeta,t) \hspace{-0.05cm} = \hspace{-0.05cm} \frac{Y_{1}(\zeta,t) m_1^{X,(1)} Y_{1}(\zeta,t)^{-1}}{-i\omega k_{4}z_{1,\star}\sqrt{t}} \hspace{-0.05cm} = \hspace{-0.05cm} 
\frac{t^{-\frac{1}{2}}}{-i\omega k_{4}z_{1,\star}} \hspace{-0.1cm} \begin{pmatrix}
\hspace{-0.05cm} 0 & \hspace{-0.1cm} 0 & \hspace{-0.1cm}  \frac{\beta_{12}^{(1)}\tilde{d}_{1,0}\lambda_{1}^{2}}{e^{t\Phi_{31}(\zeta,\omega k_{4})}} \\
\hspace{-0.05cm} 0 & \hspace{-0.1cm} 0 & \hspace{-0.1cm} 0 \\
\hspace{-0.05cm} \frac{\beta_{21}^{(1)}e^{t\Phi_{31}(\zeta,\omega k_{4})}}{\tilde{d}_{1,0}\lambda_{1}^{2}} & \hspace{-0.1cm} 0 & \hspace{-0.1cm} 0
\end{pmatrix}\hspace{-0.1cm} , \\
& Z_{2}(\zeta,t) = \frac{Y_{2}(\zeta,t) m_1^{X,(2)} Y_{2}(\zeta,t)^{-1}}{-i\omega^{2} k_{2}z_{2,\star}\sqrt{t}} = 
\frac{t^{-\frac{1}{2}}}{-i\omega^{2}k_{2}z_{2,\star}} \begin{pmatrix}
0 & 0 & 0 \\
0 & 0 & \frac{\beta_{12}^{(2)}\tilde{d}_{2,0}\lambda_{2}^{2}}{e^{t\Phi_{32}(\zeta,\omega^{2}k_{2})}} \\
0 & \frac{\beta_{21}^{(2)}e^{t\Phi_{32}(\zeta,\omega^{2}k_{2})}}{\tilde{d}_{2,0}\lambda_{2}^{2}} & 0
\end{pmatrix},
\end{align}
\end{subequations}
and, letting $\tilde{q}_1 := q = |\tilde{r}(k_{4})|^{\frac{1}{2}}r_{1}(k_{4})$,
\begin{subequations}\label{betap1p and betap2p Sector IIbis}
\begin{align}
& \beta_{12}^{(1)} = \frac{e^{\frac{3\pi i}{4}}e^{\frac{\pi \nu_{1}}{2}} \sqrt{2\pi}\bar{\tilde{q}}_1}{(e^{\pi \nu_{1}}-e^{-\pi \nu_{1}})\Gamma(-i\nu_{1})}, & & \beta_{21}^{(1)} = \frac{e^{-\frac{3\pi i}{4}}e^{\frac{\pi \nu_{1}}{2}}\sqrt{2\pi} \tilde{q}_1}{(e^{\pi \nu_{1}}-e^{-\pi \nu_{1}})\Gamma(i\nu_{1})}, \\
&  \beta_{12}^{(2)} = \frac{e^{\frac{3\pi i}{4}}e^{\frac{\pi\hat{\nu}_{2}}{2}}e^{2\pi (\nu_{4}-\nu_{2})}\sqrt{2\pi}(\bar{q}_{6}-\bar{q}_{2}\bar{q}_{5})}{(e^{\pi \hat{\nu}_{2}}-e^{-\pi \hat{\nu}_{2}})\Gamma(-i\hat{\nu}_{2})}, & & \beta_{21}^{(2)} = \frac{e^{-\frac{3\pi i}{4}}e^{\frac{\pi \hat{\nu}_{2}}{2}}e^{2\pi\nu_{2}}\sqrt{2\pi}(q_{6}-q_{2}q_{5})}{(e^{\pi \hat{\nu}_{2}}-e^{-\pi \hat{\nu}_{2}})\Gamma(i\hat{\nu}_{2})},
\end{align}
\end{subequations}
and $\nu_{1}$, $\nu_{2}$, $\nu_{4}$, $\hat{\nu}_{2}$ have been defined in Lemmas \ref{IIbis deltalemma} and \ref{lemma: nuhat lemma IIbis}.
\begin{lemma}\label{IIbis lemma: some integrals by symmetry}
For $l \in \mathbb{Z}$ and $j=0,1,2$, we have
\begin{align}
& -\frac{1}{2\pi i}\int_{\omega^{j} \partial D_\epsilon(\omega k_4)} k^{l}\hat{w}(x,t,k)dk = \omega^{j(l+1)} \mathcal{A}^{-j}F_{1}^{(l+1)}(\zeta,t)\mathcal{A}^{j}, \label{IIbis int1 F1} \\
& -\frac{1}{2\pi i}\int_{\omega^{j} \partial D_\epsilon((\omega k_4)^{-1})} k^{l}\hat{w}(x,t,k)dk = -\omega^{j(l+1)}\mathcal{A}^{-j}\mathcal{B} F_{1}^{(-l-1)}(\zeta,t) \mathcal{B}\mathcal{A}^{j}, \label{IIbis int1 tilde F1} \\
& -\frac{1}{2\pi i}\int_{\omega^{j} \partial D_\epsilon(\omega^{2} k_2)} k^{l}\hat{w}(x,t,k)dk = \omega^{j(l+1)} \mathcal{A}^{-j}F_{2}^{(l+1)}(\zeta,t)\mathcal{A}^{j}, \label{IIbis int1 F2} \\
& -\frac{1}{2\pi i}\int_{\omega^{j} \partial D_\epsilon((\omega^{2} k_2)^{-1})} k^{l}\hat{w}(x,t,k)dk = -\omega^{j(l+1)}\mathcal{A}^{-j}\mathcal{B} F_{2}^{(-l-1)}(\zeta,t) \mathcal{B}\mathcal{A}^{j}. \label{IIbis int1 tilde F2}
\end{align}
\end{lemma}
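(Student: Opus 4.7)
The plan is to derive all four identities from the $\mathcal{A}$- and $\mathcal{B}$-symmetries of $\hat{w}$ by elementary changes of variables in the integrals. The key observation is that, by construction, $\hat{v}$ inherits the symmetries
\begin{align*}
\hat{v}(x,t,k) = \mathcal{A}\,\hat{v}(x,t,\omega k)\,\mathcal{A}^{-1},
\qquad
\hat{v}(x,t,k) = \mathcal{B}\,\hat{v}(x,t,k^{-1})\,\mathcal{B},
\qquad k\in \partial\mathcal{D},
\end{align*}
since $m^{\omega k_{4}}$ and $m^{\omega^{2}k_{2}}$ were extended from $D_{\epsilon}(\omega k_{4})$ and $D_{\epsilon}(\omega^{2}k_{2})$ to $\mathcal{D}_{\omega k_{4}}$ and $\mathcal{D}_{\omega^{2}k_{2}}$ precisely by means of these two relations, and on $\partial\mathcal{D}$ the Jacobian matrix $\mathcal{B} I\mathcal{B}=I$ (resp.\ $\mathcal{A}^{-1}I\mathcal{A}=I$) drops out. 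Consequently, for $\hat{w}=\hat{v}-I$ we have on the relevant boundary circles
\begin{align*}
\hat{w}(x,t,\omega^{j}k) = \mathcal{A}^{-j}\,\hat{w}(x,t,k)\,\mathcal{A}^{j},
\qquad
\hat{w}(x,t,k^{-1}) = \mathcal{B}\,\hat{w}(x,t,k)\,\mathcal{B}.
\end{align*}

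To establish \eqref{IIbis int1 F1}, I would perform the change of variable $k = \omega^{j} k'$ in the integral over $\omega^{j}\partial D_{\epsilon}(\omega k_{4})$; since rotation by $\omega^{j}$ preserves the (clockwise) orientation, $k'$ ranges over $\partial D_{\epsilon}(\omega k_{4})$ in the correct sense, $dk = \omega^{j}dk'$, and the factor $k^{l}$ contributes $\omega^{jl}$. Pulling out the $\mathcal{A}$-symmetry for $\hat{w}(x,t,\omega^{j}k')$ yields the conjugation by $\mathcal{A}^{\pm j}$, and recognizing the remaining integral as $F_{1}^{(l+1)}(\zeta,t)$ completes the identity. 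The argument for \eqref{IIbis int1 F2} is identical, with $\omega k_{4}$ replaced by $\omega^{2}k_{2}$.

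For \eqref{IIbis int1 tilde F1} and \eqref{IIbis int1 tilde F2}, I first reduce to the case $j=0$ by the same $\mathcal{A}$-symmetry maneuver, and then perform the involutive substitution $k = 1/k'$. A short computation with $k(\theta)=(\omega k_{4})^{-1}+\epsilon e^{-i\theta}$ (or a conformality argument using $\arg(-1/(\omega k_{4})^{2})$ constant) shows that $k\mapsto 1/k$ preserves the clockwise orientation between $\partial D_{\epsilon}((\omega k_{4})^{-1})$ and $\partial D_{\epsilon}(\omega k_{4})$. One then uses $dk = -k'^{-2}dk'$ and $k^{l}=k'^{-l}$, so the integrand becomes $k'^{-l-2}\hat{w}(x,t,1/k')(-1)$; invoking the $\mathcal{B}$-symmetry to replace $\hat{w}(x,t,1/k')$ by $\mathcal{B}\hat{w}(x,t,k')\mathcal{B}$ and setting $m=-l-1$ (so that $k'^{m-1}=k'^{-l-2}$) identifies the remaining integral with $F_{1}^{(-l-1)}(\zeta,t)$, producing the prefactor $-\mathcal{B}F_{1}^{(-l-1)}\mathcal{B}$. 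Combining with the earlier $\mathcal{A}^{\pm j}$-conjugation gives the stated formula.

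The arguments are purely algebraic; no estimates or asymptotic analysis are needed. The only nontrivial bookkeeping is (i) the sign coming from the Jacobian $-1/k'^{2}$ which accounts for the leading $-$ in \eqref{IIbis int1 tilde F1}--\eqref{IIbis int1 tilde F2}, and (ii) the orientation check for $k\mapsto 1/k$, which is the step most likely to be mis-signed on a first pass but is readily confirmed by the local parametrization just mentioned. I expect no genuine obstacle; the content of the lemma is really just that the group generated by the $\mathcal{A}$- and $\mathcal{B}$-symmetries acts on the set $\{\omega^{j}\partial D_{\epsilon}((\omega k_{4})^{\pm 1})\}$ in a way that ties every such contour integral back to $F_{1}^{(l)}$ (and similarly $F_{2}^{(l)}$).
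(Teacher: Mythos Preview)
Your proposal is correct and follows exactly the same approach as the paper: the paper's proof simply states that the identities follow from the symmetries $\hat{w}(x,t,k)=\mathcal{A}\hat{w}(x,t,\omega k)\mathcal{A}^{-1}=\mathcal{B}\hat{w}(x,t,k^{-1})\mathcal{B}$ on $\partial\mathcal{D}$ and refers to an earlier paper for the details, which are precisely the changes of variables you spell out. One minor point worth making explicit in \eqref{IIbis int1 tilde F1}--\eqref{IIbis int1 tilde F2}: after the substitution $k=1/k'$ the image of $\partial D_\epsilon((\omega k_4)^{-1})$ is not exactly $\partial D_\epsilon(\omega k_4)$, but since $m^{\omega k_4}-I$ is analytic in the thin annular region between the two curves (which avoids $\mathcal{X}_1^\epsilon$), a Cauchy deformation identifies the resulting integral with $F_1^{(-l-1)}$.
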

\begin{proof}
This follows from the symmetries $\hat{w}(x, t, k) = \mathcal{A} \hat{w}(x, t, \omega k) \mathcal{A}^{-1} = \mathcal{B} \hat{w}(x, t, k^{-1}) \mathcal{B}$ valid for $k \in \partial \mathcal{D}$. The proof is identical to \cite[Proof of Lemma 9.10]{CLmain}, so we omit further details here.
\end{proof}
Using Lemma \ref{IIbis lemma: some integrals by symmetry}, we obtain
\begin{align*}
& \frac{- 1}{2\pi i}\int_{\partial \mathcal{D}} \hat{w}(x,t,k) dk =  \sum_{j=0}^{2}  \frac{-1}{2\pi i}\int_{\omega^{j} \partial D_\epsilon(\omega k_4)} \hat{w}(x,t,k) dk - \sum_{j=0}^{2}  \frac{1}{2\pi i}\int_{\omega^{j} \partial D_\epsilon((\omega k_4)^{-1})}  \hat{w}(x,t,k) dk \\
& \hspace{3.2cm} - \sum_{j=0}^{2}  \frac{1}{2\pi i}\int_{\omega^{j} \partial D_\epsilon(\omega^{2} k_2)} \hat{w}(x,t,k) dk - \sum_{j=0}^{2}  \frac{1}{2\pi i}\int_{\omega^{j} \partial D_\epsilon((\omega^{2} k_2)^{-1})} \hat{w}(x,t,k) dk \\
& \hspace{1.5cm} = \sum_{j=0}^{2} \omega^{j} \mathcal{A}^{-j}\big( F_{1}^{(1)}(\zeta,t) + F_{2}^{(1)}(\zeta,t) \big)\mathcal{A}^{j} - \sum_{j=0}^{2} \omega^{j}\mathcal{A}^{-j}\mathcal{B} \big( F_{1}^{(-1)}(\zeta,t) + F_{2}^{(-1)}(\zeta,t) \big) \mathcal{B}\mathcal{A}^{j}.
\end{align*}
Hence, by \eqref{IIbis limlhatm}, \eqref{IIbis asymptotics for F1l}, and \eqref{IIbis asymptotics for F4l}, we obtain $\hat{n}^{(1)} = (1,1,1)\hat{m}^{(1)}$, where
\begin{align}
& \hat{m}^{(1)}(x,t) =  \; \sum_{j=0}^{2} \omega^{j} \mathcal{A}^{-j}F_{1}^{(1)}(\zeta,t)\mathcal{A}^{j} - \sum_{j=0}^{2} \omega^{j}\mathcal{A}^{-j}\mathcal{B} F_{1}^{(-1)}(\zeta,t) \mathcal{B}\mathcal{A}^{j} \nonumber \\
& + \sum_{j=0}^{2} \omega^{j} \mathcal{A}^{-j}F_{2}^{(1)}(\zeta,t)\mathcal{A}^{j} - \sum_{j=0}^{2} \omega^{j}\mathcal{A}^{-j}\mathcal{B} F_{2}^{(-1)}(\zeta,t) \mathcal{B}\mathcal{A}^{j} + O(t^{-1}\ln t) \nonumber \\
& = -i(\omega k_{4})^{1}\sum_{j=0}^{2} \omega^{j} \mathcal{A}^{-j}Z_{1}(\zeta,t)\mathcal{A}^{j} + i(\omega k_{4})^{-1} \sum_{j=0}^{2} \omega^{j}\mathcal{A}^{-j}\mathcal{B} Z_{1}(\zeta,t) \mathcal{B}\mathcal{A}^{j} \nonumber \\
& -i(\omega^{2} k_{2})^{1}\sum_{j=0}^{2} \omega^{j} \mathcal{A}^{-j}Z_{2}(\zeta,t)\mathcal{A}^{j} + i(\omega^{2} k_{2})^{-1} \sum_{j=0}^{2} \omega^{j}\mathcal{A}^{-j}\mathcal{B} Z_{2}(\zeta,t) \mathcal{B}\mathcal{A}^{j} + O(t^{-1}\ln t) \label{IIbis mhatplp asymptotics}
\end{align}
as $t \to \infty$ uniformly for $\zeta \in \mathcal{I}$. 

\section{Asymptotics of $u$}

Recall from \eqref{recoveruvn} that $u(x,t) = -i\sqrt{3}\frac{\partial}{\partial x}n_{3}^{(1)}(x,t)$, where $n_{3}(x,t,k) = 1+n_{3}^{(1)}(x,t)k^{-1}+O(k^{-2})$ as $k \to \infty$. Using \eqref{Sector IIbis first transfo}, \eqref{Sector IIbis second transfo}, \eqref{IIbis def of mp3p}, and \eqref{Sector IIbis final transfo} to invert the transformations $n \mapsto n^{(1)}\mapsto n^{(2)}\mapsto n^{(3)}\mapsto \hat{n}$, we obtain
\begin{align*}
n = \hat{n}\Delta^{-1}(G^{(2)})^{-1}(G^{(1)})^{-1}, \qquad k \in \mathbb{C}\setminus (\hat{\Gamma}\cup\bar{\mathcal{D}}),
\end{align*}
where $G^{(1)}$, $G^{(2)}$, $\Delta$ are defined in \eqref{IIbis Gp1pdef}, \eqref{IIbis Gp2pdef}, and \eqref{IIbis def of Delta}, respectively. Using that $\partial_x = t^{-1}\partial_\zeta$, it follows that
\begin{align}\nonumber
u(x,t) & = -i\sqrt{3}\frac{\partial}{\partial x}\bigg( \hat{n}_{3}^{(1)}(x,t) + \ntlim_{k\to \infty} k (\Delta_{33}(\zeta,k)^{-1}-1) \bigg) 
	\\
&= -i\sqrt{3}\frac{\partial}{\partial x} \hat{n}_{3}^{(1)}(x,t) + O(t^{-1}), \qquad t \to \infty, \label{IIbis recoverun}
\end{align}
where $\hat{n}_{3}(x,t,k) = 1+\hat{n}_{3}^{(1)}(x,t)k^{-1}+O(k^{-2})$ as $k \to \infty$. Utilizing \eqref{def of Z1Z2} and \eqref{IIbis mhatplp asymptotics}, we obtain, as $t \to \infty$,
\begin{align*}
 \hat{n}_{3}^{(1)} = &\; Z_{1,13}(i (\omega k_{4})^{-1}-i (\omega k_{4})^{1})+Z_{1,31}(i \omega^{2}(\omega k_{4})^{-1}- i \omega^{1}(\omega k_{4})^{1})
 	\\
& + Z_{2,23}(i (\omega^{2} k_{2})^{-1}-i (\omega^{2} k_{2})^{1})+Z_{2,32}(i \omega^{1}(\omega^{2} k_{2})^{-1}- i \omega^{2}(\omega^{2} k_{2})^{1}) + O(t^{-1}\ln t).
\end{align*}
By Lemma \ref{lemma: nuhat lemma IIbis}, $\nu_{1}\geq 0$ and $\hat{\nu}_{2} \geq 0$. We also have $\nu_2, \nu_4 \in \R$.
Hence (\ref{betap1p and betap2p Sector IIbis}) shows that $\overline{\beta_{12}^{(1)}} = \beta_{21}^{(1)}$ and $\overline{\beta_{12}^{(2)}} = e^{-2\pi(2\nu_2 - \nu_4)}\beta_{21}^{(2)}$. Since $\lambda_{1} = |\tilde{r}(k_{4})|^{\frac{1}{4}} > 0$ and $\lambda_{2} = |\tilde{r}(\tfrac{1}{k_{2}})|^{\frac{1}{4}} > 0$ by \eqref{def of lambda1} and \eqref{def of lambda2}, we conclude with the help of \eqref{IIbis d0estimate} and \eqref{IIbis d0estimate green} that $\overline{Z_{1,13}} = \lambda_1^4 Z_{1,31}$ and $\overline{Z_{2,23}} = \lambda_2^4 Z_{2,32}$.
Using also that $\lambda_1^4 = -\tilde{r}(k_4)$ and $\lambda_2^4 = -\tilde{r}(\frac{1}{k_2})$, it follows that
\begin{align*}
& \hat{n}_{3}^{(1)} = 2i \, \im \Big( Z_{1,13}(i (\omega k_{4})^{-1}-i (\omega k_{4})^{1}) + Z_{2,23}(i (\omega^{2} k_{2})^{-1}-i (\omega^{2} k_{2})^{1}) \Big) + O(t^{-1}\ln t) \\
& = 2 i \, \im \Bigg[ \frac{\beta_{12}^{(1)}\tilde{d}_{1,0}(i (\omega k_{4})^{-1}-i (\omega k_{4})^{1})}{-i \omega k_{4} z_{1,\star} \sqrt{t} e^{t \Phi_{31}(\zeta,\omega k_{4})}\big|\tilde{r}(k_{4})\big|^{-\frac{1}{2}}} + \frac{\beta_{12}^{(2)}\tilde{d}_{2,0}(i (\omega^{2} k_{2})^{-1}-i (\omega^{2} k_{2})^{1})}{-i \omega^{2} k_{2} z_{2,\star} \sqrt{t} e^{t \Phi_{32}(\zeta,\omega^{2} k_{2})}\big|\tilde{r}(\frac{1}{k_{2}})\big|^{-\frac{1}{2}}} \Bigg] + O\Big(\frac{\ln t}{t}\Big),
\end{align*}
as $t \to \infty$. 
Employing \eqref{betap1p and betap2p Sector IIbis} and the relations
\begin{align*}
& \tilde{d}_{1,0}(\zeta,t) = e^{i\arg \tilde{d}_{1,0}(\zeta,t)}, \qquad \tilde{d}_{2,0}(\zeta,t) = e^{\pi (2\nu_{2}-\nu_{4})}e^{i\arg \tilde{d}_{2,0}(\zeta,t)}, \\
& i (\omega k_{4})^{-1}-i (\omega k_{4})^{1} = 2 \sin(\arg(\omega k_{4}(\zeta))), \qquad i (\omega^{2} k_{2})^{-1}-i (\omega^{2} k_{2})^{1} = 2 \sin(\arg(\omega^{2} k_{2}(\zeta))), 
	\\
& |\beta_{12}^{(1)}| = \sqrt{\nu_1}, 
\qquad |\beta_{12}^{(2)}|  e^{\pi (2\nu_{2}-\nu_{4})} = \sqrt{\hat{\nu}_2}, \qquad |\tilde{r}(k_{4})|^{-\frac{1}{2}} = |\tilde{r}(\tfrac{1}{k_{4}})|^{\frac{1}{2}},
\end{align*}
we get \small
\begin{align*}
& \hat{n}_{3}^{(1)} = \frac{4i\sqrt{\nu_{1}}}{-i\omega k_{4} z_{1,\star}\sqrt{t}|\tilde{r}(\frac{1}{k_{4}})|^{\frac{1}{2}}}\sin(\arg(\omega k_{4}))\sin(\tfrac{3\pi}{4}-\arg \tilde{q}_1 +\arg\Gamma(i \nu_{1})+\arg \tilde{d}_{1,0}-t\im \Phi_{31}(\zeta,\omega k_{4})) \\
& +\frac{4i\sqrt{\hat{\nu}_{2}}|\tilde{r}(\frac{1}{k_{2}})|^{\frac{1}{2}}}{-i\omega^{2} k_{2} z_{2,\star}\sqrt{t}}\sin(\arg(\omega^{2} k_{2}))\sin(\tfrac{3\pi}{4}-\arg (q_{6}-q_{2}q_{5})+\arg\Gamma(i\hat{\nu}_{2})+\arg \tilde{d}_{2,0}-t\im \Phi_{32}(\zeta,\omega^{2} k_{2})) \\
& + O(t^{-1}\ln t), 
\end{align*}
\normalsize as $t \to \infty$ uniformly for $\zeta \in \mathcal{I}$. Formula \eqref{asymp for u sector V} now directly follows from \eqref{IIbis recoverun} (as in \cite{CLWasymptotics}, we can prove that the above error $O(t^{-1}\ln t)$ can be differentiated with respect to $x$ without getting worse) and the fact that $\frac{d}{d\zeta}[\im \Phi_{31}(\zeta,\omega k_{4}(\zeta))]= -\im k_{4}$ and $\frac{d}{d\zeta}[\im \Phi_{32}(\zeta,\omega^{2} k_{2}(\zeta))] = \im k_{2}$. This finishes the proof of Theorem \ref{asymptoticsth}.

\appendix 

\section{Model RH problems}\label{appendix A}
Let $X = X_1 \cup \cdots \cup X_4 \subset \C$ be the cross defined by
\begin{align} \nonumber
&X_1 = \bigl\{se^{\frac{i\pi}{4}}\, \big| \, 0 \leq s < \infty\bigr\}, && 
X_2 = \bigl\{se^{\frac{3i\pi}{4}}\, \big| \, 0 \leq s < \infty\bigr\},  
	\\ \label{Xdef}
&X_3 = \bigl\{se^{-\frac{3i\pi}{4}}\, \big| \, 0 \leq s < \infty\bigr\}, && 
X_4 = \bigl\{se^{-\frac{i\pi}{4}}\, \big| \, 0 \leq s < \infty\bigr\},
\end{align}
and oriented away from the origin, see Figure \ref{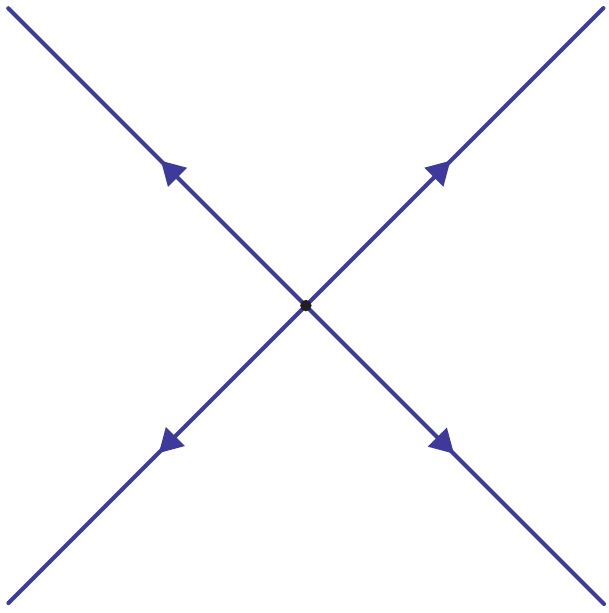}.
\begin{figure}
\begin{center}
 \begin{overpic}[width=.3\textwidth]{X.pdf}
      \put(74,68){\small $X_1$}
      \put(17,68){\small $X_2$}
      \put(15,27){\small $X_3$}
      \put(75,27){\small $X_4$}
      \put(48,42){$0$}
    \end{overpic}
     \begin{figuretext}\label{X.pdf}
        The contour $X = X_1 \cup X_2 \cup X_3 \cup X_4$.
     \end{figuretext}
     \end{center}
\end{figure}

\begin{lemma}[Model RH problem needed near $k=\omega k_{4}$]\label{IIbis Xlemma 3}
Let $q \in \mathbb{D}$, and define
\begin{align*}
\nu = -\tfrac{1}{2\pi} \ln(1 - |q|^2).
\end{align*}
Define the jump matrix $v^{X,(1)}(z)$ for $z \in X$ by
\begin{align}\nonumber 
& \begin{pmatrix} 1 & 0 & 0		\\
0 & 1 & 0 \\
 -q z^{-2i \nu} e^{\frac{iz^{2}}{2}} & 0 & 1 \end{pmatrix} \mbox{ if } z \in X_{1}, & & 
 \begin{pmatrix} 1 & 0 & \frac{-\bar{q}}{1 - |q|^2}z^{2i \nu} e^{-\frac{iz^{2}}{2}} 	\\
0 & 1 & 0 \\
0 & 0 & 1  \end{pmatrix} \mbox{ if } z \in X_{2}, 
	\\ \label{vX1def}
& \begin{pmatrix} 1 & 0 & 0 \\
0 & 1 & 0 \\
\frac{q}{1 - |q|^2} z^{-2i \nu} e^{\frac{iz^{2}}{2}}	& 0 & 1 \end{pmatrix} \mbox{ if } z \in X_{3}, & & \begin{pmatrix} 1 & 0 & \bar{q} z^{2i \nu}e^{-\frac{iz^{2}}{2}}	\\
0 & 1 & 0 \\
0 & 0 & 1 \end{pmatrix} \mbox{ if } z \in X_{4},
\end{align}
where $z^{i\nu}$ has a branch cut along $(-\infty,0]$, such that $z^{i\nu} = |z|^{i\nu}e^{-\nu  \arg(z)}$, $\arg(z) \in (-\pi,\pi)$. Then the RH problem 
\begin{enumerate}[$(a)$]
\item $m^{X,(1)}(\cdot) = m^{X,(1)}(q, \cdot) : \C \setminus X \to \mathbb{C}^{3 \times 3}$ is analytic;

\item on $X \setminus \{0\}$, the boundary values of $m^{X,(1)}$ exist, are continuous, and satisfy $m_+^{X,(1)} =  m_-^{X,(1)} v^{X,(1)}$;

\item $m^{X,(1)}(z) = I + O(z^{-1})$ as $z \to \infty$, and $m^{X,(1)}(z) = O(1)$ as $z \to 0$;
\end{enumerate}
has a unique solution $m^{X,(1)}(q,z)$. This solution satisfies
\begin{align}\label{IIbis mXasymptotics 3}
m^{X,(1)}(z) = I + \frac{m_{1}^{X,(1)}}{z} + O\biggl(\frac{1}{z^2}\biggr), \quad z \to \infty,  \quad m_{1}^{X,(1)}:=\begin{pmatrix} 
0 & 0 & \beta_{12}^{(1)} \\
0 & 0 & 0 \\
\beta_{21}^{(1)} & 0 & 0 \end{pmatrix},
\end{align}  
where the error term is uniform with respect to $\arg z \in [-\pi, \pi]$ and $q$ in compact subsets of $\mathbb{D}$, and
\begin{align}\label{IIbis betaXdef 3}
& \beta_{12}^{(1)} := \frac{e^{\frac{3\pi i}{4}}e^{\frac{\pi\nu}{2}}\sqrt{2\pi}\bar{q}}{(e^{\pi \nu}-e^{-\pi \nu})\Gamma(-i\nu)}, \qquad \beta_{21}^{(1)} := \frac{e^{-\frac{3\pi i}{4}}e^{\frac{\pi \nu}{2}}\sqrt{2\pi}q}{(e^{\pi \nu}-e^{-\pi \nu})\Gamma(i\nu)}.
\end{align}
(Note that $\beta_{12}^{(1)}\beta_{21}^{(1)} = \nu$ because $|\Gamma(i\nu)| = \frac{\sqrt{2\pi}}{\sqrt{\nu(e^{\pi\nu}-e^{-\pi\nu})}}$.)
\end{lemma}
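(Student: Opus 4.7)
The plan is to exploit the block structure of the jump matrix: every matrix in \eqref{vX1def} acts as the identity on the second row and second column. Consequently, a solution of the RH problem, if it exists, must be of the form
\begin{align*}
m^{X,(1)}(z) = \begin{pmatrix} M_{11}(z) & 0 & M_{12}(z) \\ 0 & 1 & 0 \\ M_{21}(z) & 0 & M_{22}(z) \end{pmatrix},
\end{align*}
where $M=(M_{ij})_{i,j=1,2}$ solves the classical $2\times 2$ parabolic cylinder model RH problem on $X$ with jumps
\begin{align*}
& v^{M}_{1} = \begin{pmatrix} 1 & 0 \\ -q z^{-2i\nu} e^{iz^{2}/2} & 1 \end{pmatrix}, && v^{M}_{2} = \begin{pmatrix} 1 & -\tfrac{\bar{q}}{1-|q|^{2}} z^{2i\nu}e^{-iz^{2}/2} \\ 0 & 1 \end{pmatrix},\\
& v^{M}_{3} = \begin{pmatrix} 1 & 0 \\ \tfrac{q}{1-|q|^{2}} z^{-2i\nu} e^{iz^{2}/2} & 1 \end{pmatrix}, && v^{M}_{4} = \begin{pmatrix} 1 & \bar{q} z^{2i\nu}e^{-iz^{2}/2} \\ 0 & 1 \end{pmatrix},
\end{align*}
on $X_{1},X_{2},X_{3},X_{4}$, and normalized by $M(z) = I + O(z^{-1})$ as $z\to\infty$. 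Uniqueness of $m^{X,(1)}$ then reduces to uniqueness of $M$: this follows from $\det v^{M}=1$ (so $\det M \equiv 1$) together with the vanishing lemma applied to the quotient of two candidate solutions.

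For the construction of $M$, I will follow the by-now standard route pioneered in \cite{I1981,DZ1993}. Introduce $\Psi(z) := M(z) z^{-i\nu\sigma_{3}} e^{iz^{2}\sigma_{3}/4}$ with $\sigma_{3}=\mathrm{diag}(1,-1)$, chosen so that the jumps of $\Psi$ across each ray $X_{j}$ become constant (unipotent) matrices. This forces $\partial_{z}\Psi \cdot \Psi^{-1}$ to be an entire function of $z$, which by matching the polynomial growth at infinity turns out to be of the form $-\tfrac{iz}{2}\sigma_{3} + B$ for a constant off-diagonal matrix $B$. Thus each column of $\Psi$ satisfies a first-order $2\times 2$ system equivalent to Weber's equation, and can be expressed in terms of parabolic cylinder functions $D_{\pm i\nu}(\pm e^{\pm i\pi/4}z)$. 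Matching the Stokes structure of these special functions against the four constant jumps of $\Psi$ across $X$ determines $\Psi$ uniquely.

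The constants $\beta_{12}^{(1)},\beta_{21}^{(1)}$ in \eqref{IIbis betaXdef 3} then come out by expanding the standard asymptotics of $D_{\pm i\nu}$ for large argument, reading off the $O(z^{-1})$ coefficient of $M$, and using the classical identity $|\Gamma(i\nu)|^{2} = \pi/(\nu\sinh(\pi\nu))$ (which gives $\beta_{12}^{(1)}\beta_{21}^{(1)}=\nu$ as a consistency check). The only care needed is to track the branch of $z^{i\nu}$ cut along $(-\infty,0]$ and the signs/ratios $-q,\,-\bar q/(1-|q|^{2}),\,q/(1-|q|^{2}),\,\bar q$ on the four rays so that they correspond to the conjugation-compatible factorization of the single ``reflection-coefficient'' jump $\bigl(\begin{smallmatrix} 1 & -\bar q\\ q & 1-|q|^{2} \end{smallmatrix}\bigr)$; this matches the conjugate-symmetric situation $\overline{v^{M}(\bar z)}^{T} = v^{M}(z)^{-1}$ which is exactly what guarantees $\beta_{21}^{(1)}=\overline{\beta_{12}^{(1)}}$.

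The main obstacle is the bookkeeping in this last step: converting the classical parabolic cylinder normalization into precisely the constants \eqref{IIbis betaXdef 3} requires tracking four sign conventions (branch of $z^{i\nu}$, orientation of $X_{j}$, choice of $\sigma_{3}$, and the sign in the exponent $e^{iz^{2}/2}$ versus $e^{-iz^{2}/2}$). I would therefore verify the result by specializing the general $2\times 2$ model lemma stated e.g.\ in the appendix of \cite{CLmain} (which is set up for exactly the same combination of signs), so that only the translation of notation between ``$(1,2)$-$(2,1)$ positions'' there and the ``$(1,3)$-$(3,1)$ positions'' here, and the insertion of the trivial second row/column producing $m^{X,(1)}$, remain to be checked. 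Once this identification is made, all three claims of Lemma \ref{IIbis Xlemma 3} (analyticity, the jump relation, and the asymptotic expansion \eqref{IIbis mXasymptotics 3}) follow immediately.
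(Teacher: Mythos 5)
The paper states this lemma without proof, treating it as the standard parabolic cylinder model RH problem from Its and Deift--Zhou; the argument the paper does spell out for the sibling Lemma~\ref{II Xlemma 3 green} stays in the $3\times 3$ framework throughout, conjugating by $z^{(\cdot)\tilde{\sigma}}e^{-iz^2\tilde{\sigma}/4}$, collapsing the cross onto $\R$, and solving the resulting Weber ODE. Your route of first passing to the classical $2\times 2$ model by exploiting the decoupled second row and column is the more familiar packaging and is entirely sound; both approaches reach the same parabolic cylinder functions.

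One step in your write-up deserves more care. From the block structure of $v^{X,(1)}$ it follows immediately that the second \emph{column} of $m^{X,(1)}$ is $e_2$: that column has trivial jump, is bounded near $0$, tends to $e_2$ at infinity, so it equals $e_2$ by Liouville. However, the vanishing of the off-diagonal entries of the second \emph{row} is not forced by the jump relation alone, so the assertion ``a solution, if it exists, must be of the block form'' is not quite immediate. The clean resolution reverses your order of logic: first prove uniqueness of the $3\times 3$ problem (from $\det v^{X,(1)}\equiv 1$, hence $\det m^{X,(1)}\equiv 1$, hence invertibility, hence Liouville applied to the quotient of two solutions), then observe that embedding the known $2\times 2$ parabolic cylinder solution into the $(1,3)$ block and padding with $e_2$ produces a solution, which by uniqueness is the solution. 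The final step of your proposal, deferring the extraction of $\beta_{12}^{(1)},\beta_{21}^{(1)}$ to ``the general $2\times 2$ model lemma in \cite{CLmain},'' is a hand-wave rather than an argument, but the constants in \eqref{IIbis betaXdef 3} are correct; the self-consistency checks $\beta_{21}^{(1)}=\overline{\beta_{12}^{(1)}}$ and $\beta_{12}^{(1)}\beta_{21}^{(1)}=\nu$ are useful for pinning down the branch and sign conventions you flag as the main bookkeeping hazard.
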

Lemma \ref{II Xlemma 3 green} below was also used in \cite{CLmain} for the local analysis near $k=\omega^{2} k_{2}$ when $\frac{x}{t}\in (\frac{1}{\sqrt{3}},1)$. We provide here a proof of this lemma.
\begin{lemma}[Model RH problem needed near $k=\omega^{2} k_{2}$]\label{II Xlemma 3 green}
Let $q_{2}, q_{4},q_{5},q_{6} \in \mathbb{C}$ be such that $1 + |q_{2}|^2 - |q_{4}|^{2}>0$, $1 - |q_{5}|^2 - |q_{6}|^{2}>0$, and 
\begin{align}\label{II condition on q2q4q5q6}
q_{4}-\bar{q}_5-q_{2}\bar{q}_6=0.
\end{align}
Define $\nu_2, \nu_4, \nu_5 \in \R$ by
\begin{align*}
\nu_{2} = -\tfrac{1}{2\pi} \ln(1 + |q_{2}|^2), \quad \nu_{4} = -\tfrac{1}{2\pi} \ln(1 + |q_{2}|^2 - |q_{4}|^{2}), \quad \nu_{5} = -\tfrac{1}{2\pi} \ln(1 - |q_{5}|^2 - |q_{6}|^{2}).
\end{align*}
Define the jump matrix $v^{X,(2)}(z)$ for $z \in X$ by
\begin{align}\label{II vXdef 3 green} 
v^{X,(2)}(z) = \begin{cases}
\begin{pmatrix} 
1 & 0 & 0 \\
0 & 1 & 0		\\
0 & -\frac{q_{6}-q_{2}q_{5}}{1+|q_{2}|^{2}} z^{-i (2\nu_{5}-\nu_{4})} z_{(0)}^{-i (2\nu_{2}-\nu_{4})} e^{\frac{iz^{2}}{2}} & 1 \end{pmatrix}, &   z \in X_1, 
  	\\
\begin{pmatrix}
1 & 0 & 0 \\
0 & 1 & -\frac{\bar{q}_6-\bar{q}_2\bar{q}_5}{1-|q_{5}|^{2}-|q_{6}|^{2}} z^{i (2\nu_{5}-\nu_{4})} z_{(0)}^{i (2\nu_{2}-\nu_{4})} e^{-\frac{iz^{2}}{2}} 	\\
0 & 0 & 1  \end{pmatrix}, &  z \in X_2, 
	\\
\begin{pmatrix} 
1 & 0 & 0 \\
0 & 1 & 0 \\
0 & \frac{q_{6}-q_{2}q_{5}}{1-|q_{5}|^{2}-|q_{6}|^{2}} z^{-i (2\nu_{5}-\nu_{4})} z_{(0)}^{-i (2\nu_{2}-\nu_{4})} e^{\frac{iz^{2}}{2}}	& 1 \end{pmatrix}, &  z \in X_3,
	\\
 \begin{pmatrix} 
1 & 0 & 0 \\ 
0 & 1	& \frac{\bar{q}_6-\bar{q}_2\bar{q}_5}{1+|q_{2}|^{2}} z^{i (2\nu_{5}-\nu_{4})} z_{(0)}^{i (2\nu_{2}-\nu_{4})} e^{-\frac{iz^{2}}{2}}	\\
0 & 0	& 1 \end{pmatrix}, &  z \in X_4,
\end{cases}
\end{align}
where $z^{i\nu}$ has a branch cut along $(-\infty,0]$ and $z_{(0)}^{i\nu}$ has a branch cut along $[0,+\infty)$ such that $z^{i\nu} = |z|^{i\nu}e^{-\nu  \arg(z)}$, $\arg(z) \in (-\pi,\pi)$, and $z_{(0)}^{i\nu} = |z|^{i\nu}e^{-\nu  \arg_{0}(z)}$, $\arg_{0}(z) \in (0,2\pi)$. Then the RH problem 
\begin{enumerate}[$(a)$]
\item $m^{X,(2)}(\cdot) = m^{X,(2)}(q_{2}, q_{4},q_{5},q_{6}, \cdot) : \C \setminus X \to \mathbb{C}^{3 \times 3}$ is analytic;

\item on $X \setminus \{0\}$, the boundary values of $m^{X,(2)}$ exist, are continuous, and satisfy $m_+^{X,(2)} =  m_-^{X,(2)} v^{X,(2)}$;

\item $m^{X,(2)}(z) = I + O(z^{-1})$ as $z \to \infty$, and $m^{X,(2)}(z) = O(1)$ as $z \to 0$;
\end{enumerate}
has a unique solution $m^{X,(2)}(z)$. This solution satisfies
\begin{align}\label{II mXasymptotics 3 green}
m^{X,(2)}(z) = I + \frac{m_{1}^{X,(2)}}{z} + O\biggl(\frac{1}{z^2}\biggr), \quad z \to \infty,  \quad m_{1}^{X,(2)} := \begin{pmatrix} 
0 & 0 & 0 \\
0 & 0 & \beta_{12}^{(2)} \\ 
0 & \beta_{21}^{(2)} & 0 \end{pmatrix}, 
\end{align}
where the error term is uniform with respect to $\arg z \in [-\pi, \pi]$ and $q_{2},q_{4},q_{5},q_{6}$ in compact subsets of $\{q_{2},q_{4},q_{5},q_{6}\in \mathbb{C} \,|\, 1 + |q_{2}|^2 - |q_{4}|^{2}>0, 1 - |q_{5}|^2 - |q_{6}|^{2}>0, q_{4}-\bar{q}_5-q_{2}\bar{q}_6=0\}$, and 
\begin{align}\label{II betaXdef 3 green}
& \beta_{12}^{(2)} := \frac{e^{\frac{3\pi i}{4}}e^{\frac{\pi\hat{\nu}_2}{2}}e^{2\pi (\nu_{4}-\nu_{2})}\sqrt{2\pi}(\bar{q}_6-\bar{q}_2\bar{q}_5)}{(e^{\pi \hat{\nu}_2}-e^{-\pi \hat{\nu}_2})\Gamma(-i\hat{\nu}_2)}, \qquad \beta_{21}^{(2)} := \frac{e^{-\frac{3\pi i}{4}}e^{\frac{\pi \hat{\nu}_2}{2}}e^{2\pi\nu_{2}}\sqrt{2\pi}(q_{6}-q_{2}q_{5})}{(e^{\pi \hat{\nu}_2}-e^{-\pi \hat{\nu}_2})\Gamma(i\hat{\nu}_2)},
\end{align}
where $\hat{\nu}_2 := \nu_2 + \nu_5 - \nu_4$. (Note that $\beta_{12}^{(2)}\beta_{21}^{(2)} = \hat{\nu}_{2}$.)
 
\end{lemma}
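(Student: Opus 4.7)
The proof will exploit the fact that all four jump matrices in \eqref{II vXdef 3 green} act trivially on the first row and first column. Accordingly, I will seek a solution of the form
\[
m^{X,(2)}(z) = \begin{pmatrix} 1 & 0 & 0 \\ 0 & \tilde{m}_{11}(z) & \tilde{m}_{12}(z) \\ 0 & \tilde{m}_{21}(z) & \tilde{m}_{22}(z)\end{pmatrix},
\]
where $\tilde{m}$ is a $2 \times 2$ matrix satisfying $\tilde{m}(\infty)=I$, bounded at $0$, and jumping across $X$ according to the lower-right $2\times 2$ block of $v^{X,(2)}$. Uniqueness of $m^{X,(2)}$ will then follow from uniqueness of the $2\times 2$ problem (a standard Liouville argument using $\det\tilde m\equiv 1$), together with the observation that the jump structure forces any solution with $m^{X,(2)}(\infty)=I$ to have this block-diagonal shape.

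To kill the $z$-dependence in the $2\times 2$ jumps, I will introduce
\[
q(z) := z^{i(2\nu_5-\nu_4)/2}\, z_{(0)}^{i(2\nu_2-\nu_4)/2}\, e^{-iz^2/4},\qquad Q(z) := \diag(q(z),q(z)^{-1}),
\]
and set $\Psi(z) = \tilde{m}(z) Q(z)^{-1}$. A direct calculation using the identity $q(z)^{2}\alpha_j(z)\equiv\mathrm{const}$ on each $X_j$ (where $\alpha_j$ is the off-diagonal entry of the $j$-th jump) shows that $\Psi$ has piecewise-constant triangular jumps on $X_1,\ldots,X_4$ with entries $-\tfrac{q_6-q_2q_5}{1+|q_2|^2}$, $-\tfrac{\bar q_6-\bar q_2\bar q_5}{1-|q_5|^2-|q_6|^2}$, $\tfrac{q_6-q_2q_5}{1-|q_5|^2-|q_6|^2}$, $\tfrac{\bar q_6-\bar q_2\bar q_5}{1+|q_2|^2}$, respectively. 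Because $q$ has branch cuts on $\R_-$ (from $z^{i\,\cdot}$) and on $\R_+$ (from $z_{(0)}^{i\,\cdot}$), the transformation additionally introduces diagonal jumps $\diag(e^{-\pi(2\nu_2-\nu_4)},e^{\pi(2\nu_2-\nu_4)})$ on $\R_+$ and $\diag(e^{\pi(2\nu_5-\nu_4)},e^{-\pi(2\nu_5-\nu_4)})$ on $\R_-$. The compatibility condition at the origin (the product of the six constant jumps going once around $z=0$ must equal $I$) reduces, after a short computation, to the algebraic identity $q_4-\bar q_5-q_2\bar q_6=0$ assumed in \eqref{II condition on q2q4q5q6}; this is what ensures that the RH problem for $\Psi$ admits a bounded solution near $0$.

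At this point $\Psi$ satisfies a constant-jump RH problem on six rays from the origin, of exactly the type handled by the parabolic-cylinder model in \cite{I1981,DZ1993}. Conjugating back by $e^{-iz^2\sigma_3/4}$ with $\sigma_3=\diag(1,-1)$ converts the off-diagonal jumps on $X$ into the canonical parabolic-cylinder form. Differentiating in $z$ and using the constancy of the jumps shows that the entries of $\Psi$ satisfy Weber's equation $\psi''(z)+(\tfrac12-\tfrac{z^2}{4}+a)\psi(z)=0$ with $a=-i\hat\nu_2$, so $\Psi$ admits an explicit representation in each of the six sectors as a linear combination of $D_a(\pm\sqrt{2}e^{-i\pi/4}z)$ and $D_{-a-1}(\pm\sqrt{2}e^{i\pi/4}z)$. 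The six matching relations determine the coefficients uniquely; the only nonstandard feature compared with the textbook case is that the coefficients must transport across $\R_\pm$ through the extra diagonal factors $e^{\pm\pi(2\nu_2-\nu_4)}$ and $e^{\pm\pi(2\nu_5-\nu_4)}$ introduced in the second step.

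To obtain the asymptotic expansion \eqref{II mXasymptotics 3 green}, I will insert the classical expansion $D_a(w)\sim w^{a}e^{-w^2/4}(1+O(w^{-2}))$ (valid in $|\arg w|<3\pi/4$) into the sectorwise representation and assemble the contributions; undoing $\Psi\mapsto\tilde m\mapsto m^{X,(2)}$ then yields $m_1^{X,(2)}$ in closed form. The $\Gamma(\pm i\hat\nu_2)$ factors in \eqref{II betaXdef 3 green} will come from the standard gamma-function identities for the Stokes/connection coefficients of $D_a$ between adjacent sectors at infinity, while the asymmetric exponential prefactors $e^{2\pi(\nu_4-\nu_2)}$ appearing in $\beta_{12}^{(2)}$ and $e^{2\pi\nu_2}$ in $\beta_{21}^{(2)}$ trace back directly to the diagonal $\R_\pm$ jumps absorbed in the second step. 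I expect the main bookkeeping obstacle to be tracking the interplay of the two branch conventions $z^{i\gamma}$ and $z_{(0)}^{i\gamma}$ under the rotations $z\mapsto\pm\sqrt 2 e^{\mp i\pi/4}z$, and consolidating the resulting phase factors into the stated form $e^{\pm 3\pi i/4}e^{\pi\hat\nu_2/2}$; the relations $q_4-\bar q_5-q_2\bar q_6=0$ and $\beta_{12}^{(2)}\beta_{21}^{(2)}=\hat\nu_2$ will serve as consistency checks throughout.
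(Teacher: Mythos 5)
Your plan tracks the paper's proof closely: both strip the oscillatory and branch-cut factors off the $2\times 2$ block by a diagonal right-multiplication, record the extra diagonal jumps on $\R_\pm$ coming from the cuts of $z^{i\nu}$ and $z_{(0)}^{i\nu}$, use \eqref{II condition on q2q4q5q6} to reduce to a constant-jump problem (the paper phrases it as constancy of the collapsed jump $v^\psi$ on all of $\R$, which is what makes $\psi'\psi^{-1}$ entire), derive Weber's equation, solve by parabolic cylinder functions, and read off $\beta^{(2)}_{12},\beta^{(2)}_{21}$ from the $D_a$ asymptotics and connection coefficients. One sign slip to fix: since $q(z)^{2}\alpha_{j}(z)$ is the constant quantity (as you correctly note), the transformed jump arises from right-multiplying $\tilde m$ by $Q=\diag(q,q^{-1})$ rather than $Q^{-1}$ --- this matches the paper's $m^{(X1)}=m^{X,(2)}\,z^{i(\nu_5-\nu_4/2)\tilde\sigma}\,z_{(0)}^{i(\nu_2-\nu_4/2)\tilde\sigma}\,e^{-iz^{2}\tilde\sigma/4}$, and the ``conjugating back by $e^{-iz^{2}\sigma_{3}/4}$'' step you mention is superfluous once $\Psi$ already carries constant jumps.
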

\begin{proof}
Uniqueness of $m^{X,(2)}(z)$ follows from standard arguments.
Let us temporarily assume that $m^{X,(2)}(z)$ exists.
Define $m^{(X1)}(z) := m^{X,(2)}(z) z^{i (\nu_{5}-\frac{\nu_{4}}{2})\tilde{\sigma}} z_{(0)}^{i (\nu_{2}-\frac{\nu_{4}}{2})\tilde{\sigma}} e^{-\frac{iz^{2}}{4}\tilde{\sigma}}$ where $\tilde{\sigma} = \diag (0,1,-1)$. We verify that 
\begin{align*}
m_+^{(X1)}(z) =  m_-^{(X1)}(z) v^{(X1)}(z) \quad \text{for } \ z \in (X \cup \R) \setminus\{0\},  
\end{align*}
where $\R$ is oriented from left to right and $v^{(X1)}$ is given by
\begin{align*}
& \begin{pmatrix} 
1 & 0 & 0 \\
0 & 1 & 0		\\
0 & -\frac{q_{6}-q_{2}q_{5}}{1+|q_{2}|^{2}} & 1 \end{pmatrix} \mbox{ if } z \in X_1, & & \begin{pmatrix} 
1 & 0 & 0 \\
0 & 1 & -\frac{\bar{q}_{6}-\bar{q}_{2}\bar{q}_{5}}{1-|q_{5}|^{2}-|q_{6}|^{2}} \\
0 & 0 & 1  \end{pmatrix} \mbox{ if } z \in X_2, \\
& \begin{pmatrix} 
1 & 0 & 0 \\
0 & 1 & 0 \\
0 & \frac{q_{6}-q_{2}q_{5}}{1-|q_{5}|^{2}-|q_{6}|^{2}} & 1 \end{pmatrix} \mbox{ if } z \in X_3, & & \begin{pmatrix} 1 & 0 & 0 \\
0 & 1 & \frac{\bar{q}_{6}-\bar{q}_{2}\bar{q}_{5}}{1+|q_{2}|^{2}} \\
0 & 0 & 1 \end{pmatrix} \mbox{ if } z \in X_4, \\
& e^{\pi (\nu_{4}-2\nu_{5}) \tilde{\sigma}} \mbox{ if } z \in \R_-, & & e^{\pi (2\nu_{2}-\nu_{4})\tilde{\sigma}} \mbox{ if } z \in \R_+.
\end{align*}
We next collapse the contours $X_1$ and $X_4$ onto $\R_+$ and the contours $X_2$ and $X_3$ onto $\R_-$. To this end we let
\begin{align}\label{II psimX1 3 green}
\psi(z) =  m^{(X1)}(z) B(z), 
\end{align}
where $B(z)$ is equal to
\begin{align*}
\begin{pmatrix} 
1 & 0 & 0 \\
0 & 1 & 0		\\
0 & -\frac{q_{6}-q_{2}q_{5}}{1+|q_{2}|^{2}} & 1 \end{pmatrix}, \; \begin{pmatrix} 
1 & 0 & 0 \\
0 & 1 & \frac{\bar{q}_{6}-\bar{q}_{2}\bar{q}_{5}}{1-|q_{5}|^{2}-|q_{6}|^{2}} \\
0 & 0 & 1  \end{pmatrix}, \; \begin{pmatrix} 
1 & 0 & 0 \\
0 & 1 & 0 \\
0 & \frac{q_{6}-q_{2}q_{5}}{1-|q_{5}|^{2}-|q_{6}|^{2}} & 1 \end{pmatrix}, \; \begin{pmatrix} 
1 & 0 & 0 \\
0 & 1 & -\frac{\bar{q}_{6}-\bar{q}_{2}\bar{q}_{5}}{1+|q_{2}|^{2}} \\
0 & 0 & 1 \end{pmatrix}
\end{align*}
for $\arg z \in (0, \frac{\pi}{4})$, $\arg z \in (\frac{3\pi}{4}, \pi)$, $\arg z \in (-\pi, -\frac{3\pi}{4})$, $\arg z \in (-\frac{\pi}{4},0)$, respectively, and $B(z)=I$ otherwise. The function $\psi$ satisfies $\psi_+(z) =  \psi_-(z) v^\psi(z)$ for $z \in \mathbb{R}\setminus \{0\}$, where $v^\psi$ is piecewise constant and defined by
\begin{align*}
& v^{\psi}(z) = \begin{pmatrix} 1 & 0 & 0 \\
0 & 1 & \frac{\bar{q}_{6}-\bar{q}_{2}\bar{q}_{5}}{1+|q_{2}|^{2}} \\
0 & 0 & 1 \end{pmatrix} e^{\pi (2\nu_{2}-\nu_{4})\tilde{\sigma}} \begin{pmatrix} 1 & 0 & 0 \\
0 & 1 & 0 \\
0 & -\frac{q_{6}-q_{2}q_{5}}{1+|q_{2}|^{2}} & 1  \end{pmatrix} \nonumber \\
& \hspace{0.92cm} = \begin{pmatrix}
1 & 0 & 0 \\
0 & e^{\pi(2\nu_{2}-\nu_{4})} - e^{-\pi(2\nu_{2}-\nu_{4})}\frac{|q_{6}-q_{2}q_{5}|^{2}}{(1+|q_{2}|^{2})^{2}} & e^{-\pi(2\nu_{2}-\nu_{4})}\frac{\bar{q}_{6}-\bar{q}_{2}\bar{q}_{5}}{1+|q_{2}|^{2}} \\
0 & -e^{-\pi(2\nu_{2}-\nu_{4})}\frac{q_{6}-q_{2}q_{5}}{1+|q_{2}|^{2}} & e^{-\pi(2\nu_{2}-\nu_{4})}
\end{pmatrix}, & & z \in \R_{+}, \nonumber \\
& v^{\psi}(z) = \begin{pmatrix} 1 & 0 & 0 \\
0 & 1 & 0 \\
0 & -\frac{q_{6}-q_{2}q_{5}}{1-|q_{5}|^{2}-|q_{6}|^{2}} & 1 \end{pmatrix} e^{\pi (\nu_{4}-2\nu_{5}) \tilde{\sigma}} \begin{pmatrix} 1 & 0 & 0 \\
0 & 1 & \frac{\bar{q}_{6}-\bar{q}_{2}\bar{q}_{5}}{1-|q_{5}|^{2}-|q_{6}|^{2}} \\
0 & 0 & 1  \end{pmatrix}  \nonumber \\
&  = \begin{pmatrix}
1 & 0 & 0 \\
0 & e^{\pi(\nu_{4}-2\nu_{5})} & e^{\pi(\nu_{4}-2\nu_{5})}\frac{\bar{q}_{6}-\bar{q}_{2}\bar{q}_{5}}{1-|q_{5}|^{2}-|q_{6}|^{2}} \\
0 & -e^{\pi(\nu_{4}-2\nu_{5})}\frac{q_{6}-q_{2}q_{5}}{1-|q_{5}|^{2}-|q_{6}|^{2}} & e^{-\pi(\nu_{4}-2\nu_{5})}-e^{\pi(\nu_{4}-2\nu_{5})} \frac{|q_{6}-q_{2}q_{5}|^{2}}{(1-|q_{5}|^{2}-|q_{6}|^{2})^{2}}
\end{pmatrix}, & & z \in \R_{-}.
\end{align*}
Using (\ref{II condition on q2q4q5q6}), it is a simple calculation to verify that $v^{\psi}(z)$ is constant on the whole real line. This implies that $(\partial_{z}\psi) \psi^{-1}$ is an entire function. Note that
\begin{align}\label{psiatinfinity}
\psi(z) & = (I+m_{1}^{X,(2)}z^{-1}+O(z^{-2}))z^{i (\nu_{5}-\frac{\nu_{4}}{2})\tilde{\sigma}} z_{(0)}^{i (\nu_{2}-\frac{\nu_{4}}{2})\tilde{\sigma}} e^{-\frac{iz^{2}}{4}\tilde{\sigma}}B(z) \qquad \mbox{as } z \to \infty.
\end{align} 
Hence, assuming that the asymptotics can be differentiated termwise,
\begin{align}\label{II psi one diff 3 green}
(\partial_{z}\psi) \psi^{-1} = -\frac{i z}{2}\tilde{\sigma} + \begin{pmatrix}
0 & 0 & 0 \\
0 & 0 & i\beta_{12}^{(2)} \\ 
0 & -i\beta_{21}^{(2)} & 0
\end{pmatrix},
\end{align}
for some $\beta_{12}^{(2)},\beta_{21}^{(2)} \in \mathbb{C}$. Differentiating a second time we get
$$\begin{cases}
\partial_{z}^2 \psi_{33} + (\frac{z^2}{4} - \frac{i}{2} - \beta_{12}^{(2)}\beta_{21}^{(2)})\psi_{33} = 0,\\
\partial_{z}^2 \psi_{22} + (\frac{z^2}{4} + \frac{i}{2} - \beta_{12}^{(2)}\beta_{21}^{(2)})\psi_{22} = 0, 
\end{cases} \qquad z \in \C \setminus \R.$$
By \cite[Chapter 12]{NIST}, 
\begin{subequations}\label{psi33psi22}
\begin{align}
& \psi_{33}(z) = \begin{cases} c_1 D_{-i\nu}(e^{\frac{3\pi i}{4}}z) + c_2 D_{-i\nu}(e^{-\frac{\pi i}{4}}z), & \im z > 0, \\
c_3 D_{-i\nu}(e^{\frac{3\pi i}{4}}z) + c_4 D_{-i\nu}(e^{-\frac{\pi i}{4}}z), \quad & \im z < 0,
\end{cases}
	\\
& \psi_{22}(z) = \begin{cases} c_5 D_{i\nu}(e^{-\frac{3\pi i}{4}}z) + c_6 D_{i\nu}(e^{\frac{\pi i}{4}}z), & \im z > 0, \\
c_7 D_{i\nu}(e^{-\frac{3\pi i}{4}}z) + c_8 D_{i\nu}(e^{\frac{\pi i}{4}}z), \quad & \im z < 0.
\end{cases}
\end{align}
\end{subequations}
for some constants $\{c_j\}_1^8$, where $\nu := \beta_{12}^{(2)}\beta_{21}^{(2)}$ and $D_{\nu}$ denotes the parabolic cylinder function. The $23$ and $32$ entries of $\psi$ can be found directly using \eqref{II psi one diff 3 green}:
\begin{align*}
\psi_{23}(z) = \frac{i\psi_{33}'(z)+\frac{z}{2}\psi_{33}(z)}{\beta_{21}^{(2)}}, \qquad \psi_{32}(z) = \frac{-i\psi_{22}'(z)+\frac{z}{2}\psi_{22}(z)}{\beta_{12}^{(2)}},
\end{align*}
where primes denote derivative with respect to $z$. Hence, we find
\begin{align}
\psi = \begin{pmatrix} 1 & 0 & 0 \\
0 &  \psi_{22} & \frac{i\psi_{33}'+\frac{z}{2}\psi_{33}}{\beta_{21}^{(2)}} \\
0 & \frac{-i\psi_{22}'+\frac{z}{2}\psi_{22}}{\beta_{12}^{(2)}}  &  \psi_{33} \end{pmatrix}, \qquad z \in \C\setminus \R.
\end{align}
From (\ref{psiatinfinity}), we obtain
\begin{align*}
& \resizebox{1.03\hsize}{!}{$\psi_{33}(z) = z^{-i (\nu_{5}-\frac{\nu_{4}}{2})} z_{(0)}^{-i (\nu_{2}-\frac{\nu_{4}}{2})}e^{\frac{iz^{2}}{4}}(1+O(z^{-1})) = \begin{cases}
z^{-i (\nu_{2}+\nu_{5}-\nu_{4})} e^{\frac{iz^{2}}{4}}(1+O(z^{-1})), \\
e^{-\pi(\nu_{4}-2\nu_{2})}z^{-i (\nu_{2}+\nu_{5}-\nu_{4})}e^{\frac{iz^{2}}{4}}(1+O(z^{-1})),
\end{cases}$}
	\\
& \psi_{22}(z) = z^{i (\nu_{5}-\frac{\nu_{4}}{2})} z_{(0)}^{i (\nu_{2}-\frac{\nu_{4}}{2})}e^{-\frac{iz^{2}}{4}}(1+O(z^{-1})) = \begin{cases}
z^{i (\nu_{2}+\nu_{5}-\nu_{4})} e^{-\frac{iz^{2}}{4}}(1+O(z^{-1})),  \\
e^{\pi(\nu_{4}-2\nu_{2})}z^{i (\nu_{2}+\nu_{5}-\nu_{4})}e^{-\frac{iz^{2}}{4}}(1+O(z^{-1})), 
\end{cases} 
\end{align*}
as $z \to \infty$, where in each bracket the first and second lines apply for $\arg z \in (\epsilon,\pi-\epsilon)$ and $\arg z \in (-\pi+\epsilon,-\epsilon)$, respectively (for any $\epsilon \in (0,\frac{\pi}{2})$). Since
\begin{align*}
D_{\nu}(z) = z^{\nu}e^{-\frac{z^{2}}{4}}(1+O(z^{-2})),
\end{align*}
as $z \to \infty$, $\arg z \in (-\frac{3\pi}{4}+\epsilon,\frac{3\pi}{4}-\epsilon)$, these asymptotic formulas are consistent with (\ref{psi33psi22}) provided that we choose $\nu=\nu_{2}+\nu_{5}-\nu_{4}$ and
\begin{align*}
& c_{1} = 0, & & c_{2} = e^{\frac{\pi \nu}{4}}, & & c_{3} = e^{-\frac{3\pi \nu}{4}}e^{-\pi (\nu_{4}-2\nu_{2})}, & & c_{4}=0,
	\\
& c_{5}=e^{-\frac{3\pi \nu}{4}}, & & c_{6} = 0, & & c_{7} = 0, & & c_{8} = e^{\frac{\pi \nu}{4}}e^{\pi (\nu_{4}-2\nu_{2})}.
\end{align*}
With this choice, we get
\begin{subequations}\label{psi33psi22final}
\begin{align}
& \psi_{33}(z) = \begin{cases} 
e^{\frac{\pi \nu}{4}} D_{-i\nu}(e^{-\frac{\pi i}{4}}z), & \im z > 0, \\
e^{-\frac{3\pi \nu}{4}}e^{-\pi (\nu_{4}-2\nu_{2})} D_{-i\nu}(e^{\frac{3\pi i}{4}}z), \quad & \im z < 0,
\end{cases}
	\\
& \psi_{22}(z) = \begin{cases} 
e^{-\frac{3\pi \nu}{4}} D_{i\nu}(e^{-\frac{3\pi i}{4}}z), & \im z > 0, \\
e^{\frac{\pi \nu}{4}}e^{\pi (\nu_{4}-2\nu_{2})} D_{i\nu}(e^{\frac{\pi i}{4}}z), \quad & \im z < 0.
\end{cases} 
\end{align}
\end{subequations}
and evaluation of the jump relation $\psi_+ = \psi_- v^\psi$ at $z = 0$ then gives \eqref{II betaXdef 3 green}. The expressions in (\ref{psi33psi22final}) lead to an explicit formula for $m^{X,(2)}$ in terms of parabolic cylinder functions. This expression was established under certain assumptions. However, it can now be verified directly that the constructed function $m^{X,(2)}(z)$ indeed satisfies the stated RH problem and the asymptotic formula (\ref{II mXasymptotics 3 green}).
\end{proof}

\subsection*{Acknowledgements}
Support is acknowledged from the Novo Nordisk Fonden Project, Grant 0064428, the European Research Council, Grant Agreement No. 682537, the Swedish Research Council, Grant No. 2015-05430, Grant No. 2021-04626, and Grant No. 2021-03877, the G\"oran Gustafsson Foundation, and the Ruth and Nils-Erik Stenb\"ack Foundation.

\bibliographystyle{plain}
\bibliography{is}

\end{document}